\newtheorem{lemma}{Lemma}[section]
\newtheorem{theorem}[lemma]{Theorem}
\newtheorem{remark}[lemma]{Remark}
\newtheorem{proposition}[lemma]{Proposition}
\newtheorem{corollary}[lemma]{Corollary}
\newtheorem{convention}[lemma]{Convention}
\numberwithin{equation}{section}
\title{\textsf{Derivations  of the finite-dimensional special odd Hamiltonian  superalgebras}}
\author{\textsc{Wei Bai$^{1, 2}$}\;  \textsc{ Wende Liu$^{1, 2}$}
\footnote{Correspondence: wendeliu@ustc.edu.cn (W. Liu)} \footnote{Supported by the NSF of China (10871057)
and the NSF of HLJ Province, China (A200802)}\;  \textsc{Lan Ni $^{3}$} \\
  \\
\textit{$^{1}$Department of Mathematics},
\textit{Harbin Institute of Technology}\\
\textit{Harbin 150006, China}\\
\ \ \textit{$^{2}$School of Mathematical Sciences},
\textit{Harbin Normal University} \\
\textit{Harbin 150025, China}\\
\ \ \textit{$^{3}$Department of Mathematics and Mechanics},\\
\textit{Heilongjiang Institute of Science and Technology} \\
\textit{Harbin 150027, China}  }
\date{ }
\begin{document}
\maketitle
\begin{quotation}
\noindent\textbf{Abstract} The aim is to determine the derivations
of the three series of finite-dimensional $\mathbb{Z}$-graded Lie superalgebras of Cartan-type
over a field of characteristic $p>3$, called the
special odd Hamiltonian  superalgebras. To that end we first determine
the derivations of negative $\mathbb{Z}$-degree
for the restricted and simple special odd Hamiltonian superalgebras by
means of weight space decompositions. Then the results
are used to determine the  derivations  of negative $\mathbb{Z}$-degree
 for the non-restricted and non-simple special odd Hamiltonian  superalgebras.
 Finally the derivation algebras and the outer derivation
 algebras of those Lie superalgebras are completely determined.\\

\noindent\textbf{Keywords}\ \ special odd Hamiltonian  superalgebra;
 restricted Lie superalgebra; derivation algebra
\\

\noindent \textbf{Mathematics Subject Classification 2000}: 17B50, 17B40
  \end{quotation}

  \setcounter{section}{-1}
\section{Introduction}

 We work over a field $\mathbb{F}$ of positive characteristic. Using the divided powers  algebras
 instead of  the polynomial algebras one can construct eight families of finite dimensional $\mathbb{Z}$-graded Lie superalgebras of Cartan-type over  $\mathbb{F}$, which are analogous to the  vectorial Lie superalgebras  over  $\mathbb{C}$
 (see \cite{bl,k1,k2,z}, for example). All these Lie superalgebras are  subalgebras of the full
 (super)derivation algebras  of the tensor products  of the finite dimensional divided algebras and the exterior algebras, which are viewed as associative superalgebras in the obvious fashion. The derivation algebras were sufficiently studied for the  modular Lie superalgebras of Cartan-type mentioned above (see \cite{fzj,ly1,lz3,lzw,wz,zz}), except the so-called  special odd Hamiltonian  superalgebras (see \cite{lh}).

 The present paper aims to determine the derivation algebras of the  special odd Hamiltonian  superalgebras, especially, the outer derivation algebras. Our work is heavily depend on the results obtained in \cite{lh} and contains certain results obtained in 2005 in the thesis for master-degree by the third-named author \cite{n}. We should mention that we use the method  for Lie algebras \cite{st} and benefit much from reading \cite{sf,st}.

\section{Preliminaries}

Hereafter $\mathbb{F}$ is a field of characteristic $p>3$;
$\mathbb{Z}_2:= \{\overline{0},\overline{1}\}$ is the field
of two elements.
For a vector superspace $V=V_{\bar{0}}\oplus V_{\bar{1}}$, we denote by $\mathrm{p}(a)=\theta$
the parity of a homogeneous element $a\in V_{\bar{\theta}}$, $\bar{\theta}\in\mathbb{Z}_2$. We
assume throughout that the notation $\mathrm{p}(x)$ implies that $x$ is a $\mathbb{Z}_2$-homogeneous element.
$\mathbb{N}$ and $\mathbb{N}_0$ are the sets of
positive integers and nonnegative integers, respectively. Let $m\geq 3$ denote
a fixed positive integer and $\mathbb{N}^m$ the additive monoid of
 $m$-tuples of nonnegative integers.  Fix two $m$-tuples
$\underline t:=(t_1,\ldots,t_m)\in \mathbb{N}^m$ and
$\pi:=(\pi_1,\ldots,\pi_m)\in \mathbb{N}^m,$ where $\pi_i:=p^{t_i}-1.$ Let
$\mathcal{O}(m;\underline{t})$ be the divided power algebra   with
$\mathbb{F}$-basis $\{x^{(\alpha)}\mid \alpha\in
\mathbb{A}(m;\underline{t})\}$, where
$\mathbb{A}(m;\underline{t}):=\{\alpha\in \mathbb{N}_0^m\mid
\alpha_i\leq \pi_i\}$. Write $|\alpha|:=\Sigma_{i=1}^m\alpha_i$.
For $\varepsilon_i:=(\delta_{i1},\delta_{i2},\ldots,\delta_{im})\in \mathbb{A}(m;\underline{t})$,
 we usually write $x_i$ for $x^{(\varepsilon_i)}$, where $i=1,\ldots,m.$ Let
$\Lambda(m)$ be the  exterior superalgebra over  $\mathbb{F}$ in $m$ variables
$x_{m+1},x_{m+2},\ldots,x_{2m}$.
The tensor product
$\mathcal{O}(m,m;\underline{t})
:=\mathcal{O}(m;\underline{t})\otimes_{\mathbb{F}} \Lambda(m)$
 is an associative super-commutative
superalgebra with a $\mathbb{Z}_2$-grading structure induced by the
trivial $\mathbb{Z}_2$-grading of $\mathcal{O}(m;\underline{t})$ and
the standard $\mathbb{Z}_2$-grading of $\Lambda (m).$   For $g\in
\mathcal{O}(m,\underline{t})$, $f\in \Lambda(m),$   write
$gf $ for $ g\otimes f$. Note that $ x^{(\alpha) }x^{(\beta)}
=\binom{\alpha +\beta }{ \alpha}
 x^{( \alpha +\beta)}$ for $\alpha,\beta \in
\mathbb{N}^m, $ where $\binom{ \alpha +\beta}
{\alpha}:=\prod_{i=1}^m\binom{ \alpha _i+\beta _i}{ \alpha _i}.$
Let
$$\mathbb{B}(m):=\left\{ \langle i_1,i_2,\ldots,i_k\rangle \mid m+1\leq i_1<i_2<\cdots <i_k\leq 2m;\;
k\in \overline{0, m}\right\}.$$
 For
 $u:=\langle i_1,i_2,\ldots,i_k\rangle \in \mathbb{B}(m),$ write $ |u| :=k$
 and  $x^u:=x_{i_1}x_{i_2}\cdots x_{i_k}.$
  Notice that we also  denote the index set $\{i_1,i_2,\ldots,i_k\}$ by $u$ itself.
  For $u,\upsilon \in \mathbb{B}(m)$ with $u\cap
  \upsilon=\emptyset,$ define $u+\upsilon$ to be the unique  element $w\in \mathbb{B}(m)$ such that $ w=u\cup
  \upsilon.$ Similarly, if $\upsilon\subset u,$ define $u-\upsilon$ to be the unique element $w\in
  \mathbb{B}(m)$ such that $w=u\setminus\upsilon.$
Write $\omega=\langle m+1,\ldots, 2m\rangle$.
  Note that
$\mathcal{O}(m,m;\underline{t})$ has a standard
$\mathbb{F}$-basis $\{ x^{( \alpha ) }x^u\mid(\alpha,u) \in
\mathbb{A}\times \mathbb{B}\}. $ Put $\mathbf{Y_{0}}:=\overline{1,m},$
$\mathbf{Y_{1}}:=\overline{m+1,2m}$ and $\mathbf{Y}:=\overline{1,2m}.$ Let
$\partial_r$ be the superderivation of
$\mathcal{O}(m,m;\underline{t}) $ such that
 \[ \partial_{r}(x^{(\alpha)}x^u):=
 \left\{\begin{array}{ll} x^{(\alpha-\varepsilon_r)}x^u, &
r\in\mathbf{Y_0}
\\
x^{(\alpha)}\partial x^u/\partial x_r, & i\in \mathbf{Y_1}.
\end{array}\right.\]
The generalized Witt superalgebra  $
W\left(m,m;\underline{t}\right)$ is spanned by all $f_r \partial_r,$
where $ f_r\in \mathcal{O}(m,m;\underline{t}),$ $r\in \mathbf{Y}.$   Note that
$W(m,m;\underline{t} ) $ is a free $ \mathcal{O} \left(
m,m;\underline{t}\right)$-module with basis $ \{ \partial_r\mid r\in
\mathbf{Y} \}.$ In particular, $W ( m,m;\underline{t} ) $ has a so-called
standard $\mathbb F$-basis
 $\{x^{(\alpha)}x^{u}\partial_r\mid (\alpha,u,r)\in
\mathbb{A}\times\mathbb{B}\times \mathbf{Y}\}.$
Note that $ \mathcal{O} \left( m,m;\underline{t}\right) $ possesses
a so-called standard ${\mathbb Z}$-grading structure
$\mathcal{O} \left(m,m;\underline{t}\right)=\bigoplus _{r=0}^{\xi}
\mathcal{O}(m,m;\underline{t})_{r}$ by letting
        $$\mathcal{O}(m,m;\underline{t})_{r}:=
        \mathrm{span} _{\mathbb F}\{ x^{(\alpha)} x^{u}\mid |\alpha|+|u|=r \},
        \quad \xi:=|\pi|+m=\sum_{i\in \mathbf{Y}_0}p^{t_{i}}.$$
 This
induces naturally   a $\mathbb{Z}$-grading structure, also called standard,
  $$W(m,m;\underline{t})
=\bigoplus_{i=-1}^{\xi-1} W(m,m;\underline{t})_{i},$$
  where
$$W(m,m;\underline{t})_{i} :=\mathrm{span}_{\mathbb{F}}\{f\partial_{r}\mid
r\in \mathbf{Y},\ f\in \mathcal{O}(m,m;\underline{t})_{i+1}\}.$$
Put
\[i':= \left\{\begin{array}{ll}
i+m, &\mbox{if}\; i\in\mathbf{Y_0}
\\i-m, &\mbox{if}\; i\in \mathbf{Y_1},
\end{array}\right.\quad \mu(i):=\left\{\begin{array}{ll}\bar{0},  &\mbox{if}\; i\in \mathbf{Y_0}\\
\overline{1},&\mbox{if}\; i\in\mathbf{Y_1}. \end{array}\right. \]
Clearly, $\mathrm{p}(\partial_i)=\mu(i)$.
 Define the linear operator
  $\mathrm{T_H}:\mathcal{O}(m,m;\underline{t})\longrightarrow
 W(m,m;\underline{t})$ such that
 \[
 \mathrm{T_H}(a):=\sum_{i\in \mathbf{Y}}(-1)^{\mathrm{p}(\partial_i)\mathrm{p}(a)}\partial_i(a)\partial_{i'}
  \quad\mbox{for}\  a\in
 \mathcal{O}(m,m;\underline{t}).
 \]
 Note that $\mathrm{T_H}$ is  odd with respect to the $\mathbb{Z}_2$-grading  and has degree $-2$
 with respect to the $\mathbb{Z}$-grading. The following formula is well known:
\begin{equation*}\label{hee1.1}
   [\mathrm{T_H}(a),\mathrm{T_H}(b)]=\mathrm{T_H}(\mathrm{T_H}(a)(b)) \quad \mbox{for}\,
   a,b\in\mathcal{O}(m,m;\underline{t})
\end{equation*}
and
\[HO(m,m;\underline{t}):=\{\mathrm{T_H}(a)\,|\,a\in\mathcal{O}(m,m;\underline{t})\}
\]
is a finite-dimensional simple Lie superalgebra, called the
 odd Hamiltonian superalgebra \cite{k2,lz2}.
Put
$$\overline{HO}(m,m;\underline{t}):=\overline{HO}(m,m;\underline{t})_{\bar0}\oplus
\overline{HO}(m,m;\underline{t})_{\bar1}$$
 where for $\alpha \in \mathbb{Z }_2,$
\begin{eqnarray*}
 \overline{HO}(m,m;\underline{t})_{\alpha}:&=&\bigg \{\sum_{i\in \mathbf{Y}} a_i\partial_i\in
  W(m,m;\underline{t})_{\alpha}\bigg|\\
 &&\partial_i(a_{j'})=(-1)^{\mu(i)\mu(j)+(\mu(i)+\mu(j))(\alpha+\bar{1})}\partial_j(a_{i'}), i, j \in \mathbf{Y}\bigg\}.
\end{eqnarray*}
We state certain basic results in \cite[Proposition 1]{lzw}, which will be used in the following sections:
\begin{itemize}
\item[$\mathrm{(i)}$]
 Both $HO(m,m;\underline{t})$ and $\overline{HO}(m,m;\underline{t})$ are ${\mathbb Z}$-graded
subalgebras of $ W(m,m;\underline{t})$,
\begin{eqnarray*}
 HO(m,m;\underline{t})=\bigoplus_{i=-1}^{\xi-2}HO(m,m;\underline{t})_i; \quad
\overline{HO}(m,m;\underline{t})=\bigoplus_{i=-1}^{\xi-1}\overline{HO}(m,m;\underline{t})_i.
\end{eqnarray*}

\item[$\mathrm{(ii)}$]  $HO(m,m;\underline{t})$ is a $\mathbb{Z}$-graded ideal of $\overline{HO}(m,m;\underline{t})$.\\

\item[$\mathrm{(iii)}$]  $\ker(\mathrm{T_H})=\mathbb{F}\cdot 1$.\\
\end{itemize}
Let $\mathrm{div}: W(m,m;\underline{t})\longrightarrow \mathcal{O}(m,m;\underline{t})$
be the  {divergence}, which is a linear mapping such that
$$\mathrm{div}(f_r\partial_r)=(-1)^{\mathrm{p}(\partial_r)\mathrm{p}(f_r)}\partial_r(f_r)\quad \mbox{for all}\, r\in\mathbf{Y}.$$
 Note that $\mathrm{div}$ is an even $\mathbb{Z}$-homogeneous superderivation of $W(m,m;\underline{t})$
 into the module $\mathcal{O}(m,m;\underline{t})$, that is
\begin{eqnarray}\label{bl1305}
\mathrm{div}[E,D]=E(\mathrm{div} D)-(-1)^{\mathrm{p}(E)\mathrm{p}(D)}D(\mathrm{div } E)\quad\mbox{for all}\ D, E\in W(m,m;\underline{t}).
\end{eqnarray}
Putting
\begin{eqnarray*}
&&S'(m,m;\underline{t}):=\{D \in W(m,m;\underline{t})\mid\mathrm{div}(D)=0\},\\
&&\overline{S}(m,m;\underline{t}):=\{D \in W(m,m;\underline{t})\mid\mathrm{div}(D)\in \mathbb{F}\},
\end{eqnarray*}
 we have:
\begin{itemize}
\item[$\mathrm{(i)}$]
   Both $S'(m,m;\underline{t})$ and $\overline{S}(m,m;\underline{t})$ are ${\mathbb Z}$-graded
subalgebras of $ W(m,m;\underline{t})$:
\begin{eqnarray*}
S'(m,m;\underline{t})=\bigoplus_{i=-1}^{\xi-1}S'(m,m;\underline{t})_i; \quad
\overline{S}(m,m;\underline{t})=\bigoplus_{i=-1}^{\xi-1}\overline{S}(m,m;\underline{t})_i.
\end{eqnarray*}
\item[$\mathrm{(ii)}$]  $S'(m,m;\underline{t})$ is
a $\mathbb{Z}$-graded ideal of  $\overline{S}(m,m;\underline{t})$.
\end{itemize}
Here we write down the following symbols which will be frequently used in the future:
\begin{eqnarray*}
&&\Delta:=\sum_{i=1}^m\Delta_i, \quad
\Delta_{i}:=\partial_{i}\partial_{i'}\quad\mbox{for}\, i\in \mathbf{Y_0};\\
&&\nabla_{i}(x^{(\alpha)}x^{u}):=x^{(\alpha+\varepsilon_{i})}x_{i'}x^{u}\quad \mbox{for}\,(\alpha,u) \in
\mathbb{A}\times \mathbb{B}, i\in \mathbf{Y_0};\\
&&\Gamma_{i}^{j}:=\nabla_{j}\Delta_{i}\quad \mbox{for}\; i,j\in \mathrm{Y_0};\\
&&\mathbf{I}(\alpha,u):=\{i\in  \mathbf{Y_0}\mid
\Delta_{i}(x^{(\alpha)}x^{u})\neq0\};\\
&&\widetilde{\mathbf{I}}(\alpha,u):=\{i\in
\mathbf{Y_0}\mid \nabla_{i}(x^{(\alpha)}x^{u})\neq0\};\\
&&\mathcal{D}^{\ast}:=\{x^{(\alpha)}x^{u}\mid
\mathbf{I}(\alpha,u)\neq\emptyset,\widetilde{\mathbf{I}}(\alpha,u)\neq\emptyset\};\\
&&\mathcal{D}_1:=\{x^{(\alpha)}x^{u}\mid
\mathbf{I}(\alpha,u)=\widetilde{\mathbf{I}}(\alpha,u)=\emptyset\};\\
&&\mathcal{D}_2:=\{x^{(\alpha)}x^{u}\mid
\mathbf{I}(\alpha,u)=\emptyset, \, \widetilde{\mathbf{I}}(\alpha,u)\neq\emptyset\}.
\end{eqnarray*}
In this paper we mainly study  the three series of Lie superalgebras:
\begin{eqnarray*}
&&SHO(m,m;\underline{t}):=S'(m,m;\underline{t})\cap HO(m,m;\underline{t}),\\
&& SHO(m,m;\underline{t})^{(1)}:=[SHO(m,m;\underline{t}), SHO(m,m;\underline{t})],\\
&& SHO(m,m;\underline{t})^{(2)}:=[SHO(m,m;\underline{t})^{(1)}, SHO(m,m;\underline{t})^{(1)}],
\end{eqnarray*}
called the  {special odd Hamiltonian superalgebras}.
By \cite [Theorem 4.1]{lh} they are centerless and
$SHO(m,m;\underline{t})^{(2)}$ is simple. Further informations for these Lie superalgebras can be found in \cite{k2,lh}.

\begin{convention}\label{c1} For short, we usually omit the parameter $(m,m;\underline{t})$ and write
$\frak{g}$ for $SHO$. Sometime  we also write
$\frak{g}(\underline{t})$ for $\frak{g}(m,m;\underline{t})$  for $t\in \mathbb{N}^m$.
\end{convention}

We close this section by recalling the following general notion and basic facts.
 Suppose  $X$ is a finite dimensional $\mathbb{Z}$-graded Lie superalgebra, $X=\oplus_{i\in \mathbb{Z}}X_i.$ By
$$\mathrm{Der}X:=\mathrm{Der}_{\bar{0}}X\oplus \mathrm{Der}_{\bar{1}}X$$
denote the derivation algebra of $X$, which is also a  $\mathbb{Z}$-graded Lie superalgebra,
$$\mathrm{Der}X=\sum_{i\in \mathbb{Z}}\mathrm{Der}_iX$$
where
$$\mathrm{Der}_iX=\{\phi\in \mathrm{Der}X\mid \phi(X_t)\subset X_{t+i}, \, \forall t\in \mathbb{Z}\}.$$
As in the usual, write
 $$\mathrm{Der}^-X:=\mathrm{span}_{\mathbb{F}}\{\phi\in \mathrm{Der}_iX\mid i<0\},\quad
 \mathrm{Der}^+X:=\mathrm{span}_{\mathbb{F}}\{\phi\in \mathrm{Der}_iX\mid i\geq0\},$$
  called the negative and nonnegative parts of the derivation algebra of $X$, respectively.
The element in $\mathrm{Der}^-X$ is called negative degree derivation and the element in $\mathrm{Der}^+X$
is called nonnegative degree derivation.
\section{Restrictedness and negative derivations}

As mentioned in the introduction our main purpose
is to determine the derivations of  the special odd Hamiltonian superalgebras.
Motivated by the method used in the
modular Lie algebra theory \cite[Lemma 6.1.3 and Theorem 7.1.2]{st}, in this paper we do not compute directly the derivations of the
  non-restricted and non-simple special odd Hamiltonian superalgebras but determine firstly the derivations
 (especially, those of negative degree) of the restricted and simple  special odd Hamiltonian superalgebras.
 From \cite{lh} the Lie superalgebra
   $\frak{g}^{(2)}$ is simple. Since
  we need the restrictednees of the Lie superalgebras under considerations in the process of determining derivations,
   in this section we first
   show that $\frak{g}^{(2)}(\underline{t})$ is restricted if and only if $\underline{t}=\underline{1}.$
Since a derivation is determined by its action on a generating set, we next give
 a generating set of the  restricted Lie superalgebra  $\frak{g}^{(2)}(\underline{1})$.
Finally, we
determine the derivations of negative $\mathbb{Z}$-degree for $\frak{g}^{(2)}(m,m;\underline{1})$,
since it is enough for determining the derivations in the general case in the subsequent sections.

Let us  introduce   some symbols for later use:
\begin{eqnarray*}
&&\widetilde{\mathcal{G}}:=\bigg\{\mathrm{T_H}\bigg(x^{(\alpha)}x^{u}-\sum _{i\in
\mathbf{I}(\alpha,u)}\Gamma_{i}^{q}(x^{(\alpha)}x^{u})\bigg)\bigg | x^{(\alpha)}x^{u}\in \mathcal{D}^{\ast}, q\in \widetilde{\mathbf{I}}(\alpha,u)\bigg\};\\
&&\mathfrak{A}_{1}:=\{\mathrm{T_H}(x^{(\alpha)}x^{u})\mid
\mathbf{I}(\alpha,u)=\widetilde{\mathbf{I}}(\alpha,u)=\emptyset\};\\
&&\mathfrak{A}_{2}:=\{\mathrm{T_H}(x^{(\alpha)}x^{u})\mid \mathbf{I}(\alpha,u)=\emptyset,
\widetilde{\mathbf{I}}(\alpha,u)\neq\emptyset\}.
\end{eqnarray*}
 We also write down some facts in \cite{lh}:
\begin{itemize}
\item[$\mathrm{(i)}$] \cite[Lemma 2.2]{lh}
 For $f\in \mathcal{O}(m,m;\underline{t}),$ $\mathrm{T_H}(f)\in \frak{g}$
 if and only if $\Delta(f)=0$.
\item[$\mathrm{(ii)}$] \cite[Theorem 2.7]{lh} $\frak{g}$ is spanned by
$\widetilde{\mathcal{G}}\bigcup\mathfrak{A}_{1}\bigcup\mathfrak{A}_{2}$
and $\frak{g}$ is a ${\mathbb Z}$-graded
subalgebra of $ W(m,m;\underline{t})$,
 $\frak{g}=\oplus_{i=-1}^{\xi-4}\frak{g}_i.$
 Then $\frak{g}$
is spanned by the elements of the form
\begin{equation}\label{bl1130}
\mathrm{T_H}\bigg(x^{(\alpha)}x^{u}-\sum _{i\in
\mathbf{I}(\alpha,u)}\Gamma_{i}^{q}(x^{(\alpha)}x^{u})\bigg)\quad q\in \widetilde{I}(\alpha, u).
\end{equation}
 For convenience, we call $x^{(\alpha)}x^{u}$ a leader of the element (\ref{bl1130}).\\

\item[$\mathrm{(iii)}$] \cite[Corollary 3.5]{lh} $\frak{g}^{(1)}$ is spanned by
 $\widetilde{\mathcal{G}}\bigcup\mathfrak{A}_{2}$
and $\frak{g}^{(1)}$ is a ${\mathbb Z}$-graded
subalgebra of $ W(m,m;\underline{t})$,
 $\frak{g}^{(1)}=\oplus_{i=-1}^{\xi-4}(\frak{g}^{(1)})_i.$
Moreover,
$$(\frak{g}^{(1)})_i=[(\frak{g}^{(1)})_{-1},(\frak{g}^{(1)})_{i+1}],\quad -1\leq i\leq\xi-5.$$
$$( \frak{g}^{(1)})_{\xi-4}=\mathrm{span}_{\mathbb{F}}\bigg\{\mathrm{T_H}\bigg(x^{(\pi-\varepsilon_i)}x^{\omega-\langle i' \rangle}-
\sum_{r\in\mathbf{Y_0}\backslash \{i\}}\Gamma^i_r
\big(x^{(\pi-\varepsilon_i)}x^{\omega-\langle i'
\rangle}\big)\bigg)\bigg|i\in \mathbf{Y_0}\bigg\}.$$ Put
$\mathcal{D}= \mathcal{D}_2\cup \mathcal{D}^{\ast}.$ By
(\ref{bl1130}),    $\frak{g}^{(1)}$ is spanned by the  elements of the form
\begin{equation}\label{eqbl1504}
\mathrm{T_H}\bigg(x^{(\alpha)}x^{u}-\sum _{i\in
\mathbf{I}(\alpha,u)}\Gamma_{i}^{q}(x^{(\alpha)}x^{u})\bigg)\quad q\in \widetilde{I}(\alpha, u),\quad x^{(\alpha)}x^{u} \in\mathcal{D}.
\end{equation}

\item[$\mathrm{(iv)}$]  \cite[Theorem 3.8]{lh}  $\frak{g}^{(2)}$ is a ${\mathbb Z}$-graded
subalgebras of $ W(m,m;\underline{t})$,
 $\frak{g}^{(2)}=\bigoplus_{i=-1}^{\xi-5}(\frak{g}^{(2)})_i.$
Moreover,
\begin{eqnarray*}
&&(\frak{g}^{(2)})_{i}=(\frak{g}^{(1)})_{i}, \quad -1\leq i\leq \xi-5;\\
&&(\frak{g}^{(2)})_{i-1}=[(\frak{g}^{(2)})_{-1},  (\frak{g}^{(2)})_{i}],\quad 0\leq i\leq \xi-5.
\end{eqnarray*}
\end{itemize}

\begin{theorem}\label{t2.2}
$\frak{g}^{(2)}(\underline{t})$ is  restricted
  if and only if $\underline{t}=\underline{1}.$
\end{theorem}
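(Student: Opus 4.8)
\emph{Proof plan.} The plan is to characterise restrictedness of $\frak{g}^{(2)}$ through the associative (operator) $p$-th power inside the ambient Witt superalgebra, and then read off the answer from the $\mathbb{Z}$-grading.

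First I would set up the reduction. Since $\frak{g}^{(2)}$ is simple, hence centerless, it is restricted exactly when $(\mathrm{ad}\,D)^p$ is inner for every even $D$; and by Jacobson's formula it suffices to test this on a homogeneous spanning set (the $\mathrm{ad}$ of any iterated bracket is inner). Now $\frak{g}^{(2)}\subseteq W(m,m;\underline{t})\subseteq\mathrm{End}_{\mathbb{F}}(\mathcal{O}(m,m;\underline{t}))$, and for even $D$ the associative identity gives $(\mathrm{ad}_W D)^p=\mathrm{ad}(D^p)$, where $D^p$ is the operator $p$-th power; restricting to $\frak{g}^{(2)}$ yields $(\mathrm{ad}\,D)^p=\mathrm{ad}(D^p)|_{\frak{g}^{(2)}}$. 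Because $\mathrm{ad}$ is injective and $\mathbb{Z}$-degree preserving while $(\mathrm{ad}\,D)^p$ has degree $pd$ when $\deg D=d$, any $p$-image of a homogeneous even $D$ is forced to be homogeneous of degree $pd$. Hence \emph{restrictedness is equivalent to $D^p\in\frak{g}^{(2)}$ for every even homogeneous $D$}, where for $\deg D=-1$ this means $D^p=0$, since $(\frak{g}^{(2)})_{-p}=0$.

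For necessity I would argue the contrapositive: if $t_s\ge 2$ for some $s\in\mathbf{Y_0}$, then $\frak{g}^{(2)}$ is not restricted. The even element $\partial_s=-\mathrm{T_H}(x_{s'})\in(\frak{g}^{(2)})_{-1}$ now has $\partial_s^p\neq 0$ as an operator, because $\partial_s^p(x_s^{(p)})=1$ and $x_s^{(p)}$ exists ($p\le\pi_s=p^{t_s}-1$). Fixing $q\in\mathbf{Y_0}\setminus\{s\}$, I set
\[
E:=\mathrm{T_H}\big(x_s^{(p)}x_{s'}-\Gamma_s^{q}(x_s^{(p)}x_{s'})\big)
 =\mathrm{T_H}\big(x_s^{(p)}x_{s'}-x_s^{(p-1)}x_qx_{q'}\big).
\]
Its leader $x_s^{(p)}x_{s'}$ lies in $\mathcal{D}^{\ast}$, so $E\in\widetilde{\mathcal{G}}\subseteq\frak{g}^{(1)}$, and as $\deg E=p-1\le\xi-5$ the structure results recalled above (items (iii) and (iv)) give $E\in(\frak{g}^{(1)})_{p-1}=(\frak{g}^{(2)})_{p-1}$. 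A direct expansion of $\mathrm{T_H}$ shows the coefficient of $\partial_s$ in $E$ equals $-x_s^{(p)}$ while every other coefficient involves only $x_s^{(k)}$ with $k\le p-1$; using $[\partial_s^p,g\partial_r]=\partial_s^p(g)\partial_r$ this yields $(\mathrm{ad}\,\partial_s)^p(E)=[\partial_s^p,E]=-\partial_s\neq 0$. This contradicts the reduction (which forces $(\mathrm{ad}\,\partial_s)^p=0$), so $\frak{g}^{(2)}$ is not restricted. Conversely, for $\underline{t}=\underline{1}$ I would verify $D^p\in\frak{g}^{(2)}$ for every even homogeneous $D$ of degree $d$ in three ranges. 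For $d=-1$, $D$ is a combination of the commuting operators $\partial_s$ with $\partial_s^p=0$ (truncation), so $D^p=0$. For $d\ge 1$, $D$ lies in the positive part, whose elements are $p$-nilpotent when $\underline{t}=\underline{1}$ (using $x_i^{(p)}=0=\partial_i^p$), giving $D^p=0$. For $d=0$ one has $D^p\in SHO_0$, since $SHO=HO\cap S'$ is closed under the operator $p$-th power ($HO$ by the power formula for $\mathrm{T_H}$, and $S'=\ker(\mathrm{div})$ because $\mathrm{div}(D^p)=D^{p-1}(\mathrm{div}\,D)=0$); and $SHO_0=(\frak{g}^{(1)})_0=(\frak{g}^{(2)})_0$ because no degree-$2$ monomial lies in $\mathcal{D}_1$ when $\underline{t}=\underline{1}$ (that would force $u\supseteq\omega$, impossible in degree $2$). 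Thus $D^p\in\frak{g}^{(2)}$ in all cases and $D\mapsto D^p$ is the desired $p$-map.

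The step I expect to be the main obstacle is the $p$-nilpotency of positive-degree even elements in the sufficiency direction: proving $D^p=0$ for every even $D$ of $\mathbb{Z}$-degree $d\ge 1$ when $\underline{t}=\underline{1}$. Pure grading gives this only for $d\ge m+1$ (where $pd$ exceeds the top degree $\xi$ of $\mathcal{O}$), so for $1\le d\le m$ one must genuinely exploit the truncation $x_i^{(p)}=0$, $\partial_i^p=0$ together with a Jacobson-formula/filtration argument controlling the cross terms in $(\sum_i D_i)^p$. This $p$-nilpotency is exactly what keeps $D^p$ from ever landing in the $\frak{A}_1$-pieces or the top degree $\xi-4$ of $SHO$, and it is the delicate point; by contrast the necessity direction is a single explicit witness and is comparatively routine.
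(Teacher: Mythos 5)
Your necessity argument is correct and follows the paper's route: restrictedness makes $(\mathrm{ad}\,\partial_s)^p$ an inner derivation of $\mathbb{Z}$-degree $-p$, and since inner derivations of the centerless algebra $\frak{g}^{(2)}$ decompose into components of degree $\geq -1$, it must vanish. Your explicit witness $E=\mathrm{T_H}\big(x^{(p\varepsilon_s)}x_{s'}-x^{((p-1)\varepsilon_s)}x_qx_{q'}\big)\in(\frak{g}^{(2)})_{p-1}$ with $[\partial_s^p,E]=-\partial_s\neq 0$ is a correct computation and usefully fills in the step that the paper compresses into ``which forces $\underline{t}=\underline{1}$.''

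The sufficiency direction, however, contains a genuine gap, and you have located it yourself: the claim that $D^p=0$ for \emph{every} even homogeneous $D$ of $\mathbb{Z}$-degree $d\geq 1$ is never proved, only promised via ``a Jacobson-formula/filtration argument.'' This cannot follow from truncation and grading formalities alone: already in the Lie algebra $W(2;\underline{1})$ with $p=5$, the degree-one element $D=x^{(2\varepsilon_2)}\partial_1+x_1x_2\partial_2$ satisfies $D^5(x_1)=x^{(2\varepsilon_1)}x^{(4\varepsilon_2)}+4x^{(4\varepsilon_1)}x^{(2\varepsilon_2)}\neq 0$, so some feature of $\frak{g}^{(2)}$ beyond $x_i^{(p)}=0=\partial_i^p$ would have to enter, and none is offered. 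The deeper issue is that your reduction asks for more than is needed. By Jacobson's restrictedness criterion for the centerless algebra $\frak{g}^{(2)}(\underline{1})$, it suffices that $(\mathrm{ad}\,A)^p$ be inner for $A$ running over a \emph{spanning set} of the even part; equivalently, by Jacobson's formula applied inside the restricted ambient algebra $W(m,m;\underline{1})$, the cross terms in $\big(\sum_i a_iA_i\big)^p$ are Lie polynomials in the $A_i$ and therefore lie in the subalgebra $\frak{g}^{(2)}$ automatically --- they need not vanish and need not be controlled, which dissolves exactly the obstacle you flag. What then remains to prove is $A^p\in\frak{g}^{(2)}(\underline{1})$ only for the spanning elements $A$ of the form (1.2), and this is precisely the content of the paper's proof: using the monomial formula $\mathrm{T_H}(x^{(\alpha)}x^u)^p=\mathrm{T_H}(x^{(\alpha)}x^u)$ if $x^{(\alpha)}x^u=x_ix_{i'}$ and $=0$ otherwise, the paper handles the leader $x_ix_{i'}$ directly, shows $A^p=0$ when $|u|\geq 3$ (the two summands of $A$ then commute), and when $|u|=1$ evaluates $A^p$ on each generator $x_j$. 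That case analysis is the real work of sufficiency and is entirely absent from your proposal; your degree-zero case has the same weakness, since closure of $HO$ and $S'$ under operator $p$-th powers of \emph{sums} is asserted from a monomials-only formula rather than reduced to spanning elements as above.
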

\begin{proof} Suppose  $\underline{t}=\underline{1}.$
Note that $W(m,m;\underline{1})$ is the full derivation algebra of the underlying algebra
$\mathcal{O}(m,m;\underline{1})$. One sees that  $W(m,m;\underline{1})$ is a restricted Lie superalgebra
with respect to the usual $p$-power  (mapping) and that the $p$-power fulfills that $(x_i\partial_i)^{p}=x_i\partial_i$
for $i \in \mathbf{Y}$ and vanishes on the other even standard basis elements, as in the Lie algebra case.
Thus it is sufficient to show that the even part of
$\frak{g}^{(2)}(\underline{1})$ is closed under the $p$-power. Note that the even part of $\frak{g}^{(2)}(1)$
is spanned by the elements of the form  (\ref{eqbl1504})
$$A:=\mathrm{T_H}\bigg(x^{(\alpha)}x^{u}-\sum _{i\in
\mathbf{I}(\alpha,u)}\Gamma_{i}^{q}(x^{(\alpha)}x^{u})\bigg)\quad q\in \widetilde{\mathbf{I}}(\alpha,u),\quad  x^{(\alpha)}x^{u} \in\mathcal{D},$$
where $|u|$ is odd. It is sufficient to show that $A^{p}\in  \frak{g}^{(2)}(\underline{1}).$
We shall frequently use the formula below without notice:
\[\mathrm{T_H}(x^{\alpha}x^u)^p= \left\{\begin{array}{ll}
\mathrm{T_H}(x^{\alpha}x^u), \, \,&\mbox{if}\, \, x^{\alpha}x^u=x_ix_{i'}, \,\mbox{for}\, \,  i\in\mathbf{Y_0},\\
0, &\mbox{otherwise},
\end{array}\right.\]
which is a direct consequence of \cite[Proposition 5.1]{lz1}.

If $x^{(\alpha)}x^u= x_ix_{i'}$ for $i\in\mathbf{Y_0}$ then
$$A^{p}=\big(\mathrm{T_H}(x_ix_{i'}-\Gamma_i^qx_ix_{i'})\big)^p
=\mathrm{T_H}(x_ix_{i'}-\Gamma_i^qx_ix_{i'})\in \frak{g}^{(2)}(1),\,q \in \mathbf{Y_0}, \, q\neq i.$$
Now  assume that $x^{(\alpha)}x^u\not= x_ix_{i'}$ for $i\in\mathbf{Y_0}$ and let us show that $A^{p}=0.$
\\

\noindent\textbf{Case 1:}
$|u|>1$, that is, $|u|\geq3$. Then $x^ux^{u-\langle i'\rangle-\langle j'\rangle}=x^{u-\langle i'\rangle}x^{u-\langle j'\rangle}=0.$
It follows that $[\mathrm{T_H}(x^{(\alpha)}x^u), \mathrm{T_H}(\Gamma^q_rx^{(\alpha)}x^u)]=0$ and hence $A^p=0.$
\\

\noindent\textbf{Case 2:} $|u|=1.$ Suppose  $u=\{ i'\}$, $i\in \mathbf{Y_0}$.
Since $A$ is a derivation of $\mathcal{O}(m,m;\underline{1})$,
it suffices to show that $A^p(x_j)=0$ for all $j\in \mathbf{Y}$.  We consider the following two subcases:\\

\noindent\textbf{Subcase 2.1:} $j\in  \mathbf{Y_0}$. We have
\begin{eqnarray*}
&&A(x_j)=0 \quad \mbox{for}\, \, j\in\mathbf{Y_0}\backslash\{i, q\};\\
&&A^p(x_j)=ax^{(p\alpha-(p-1)\varepsilon_j)}=0 \quad \mbox{for} \, \,j=i;\\
&&A^p(x_j)=bx^{(p\alpha-p\varepsilon_j+\varepsilon_q)}=0 \quad \mbox{for} \,\, j=q,
\end{eqnarray*}
where $a, b \in\mathbb{F}.$\\

\noindent\textbf{Subcase 2.2:} $j\in  \mathbf{Y_1}$. We have
\begin{eqnarray}\label{05151}
A^p(x_{j})=cx^{(p\alpha-(p-1)\varepsilon_i-\varepsilon_{j'})}x_{i'}
-dx^{(p\alpha-p\varepsilon_i-\varepsilon_{j'}+\varepsilon_q)}x_{q'},
\end{eqnarray}
where $c, d \in\mathbb{F}.$
In particular,
\begin{eqnarray}\label{05152}
\bigg(\mathrm{T_H}(x^{(\varepsilon_i+\varepsilon_{j'})}x_{i'})-x^{(2\varepsilon_{j'})}x_{j}\bigg)^3(x_{j})=0
\quad i\neq j'.
\end{eqnarray}
The equations (\ref{05151}) and (\ref{05152}) show $A^p(x_{j})=0$ unless
$\alpha=\varepsilon_i+\varepsilon_{j'}$ with distinct $i$, $j'$ and $q$.
Note that
$$[\mathrm{T_H}(x^{(\varepsilon_i+\varepsilon_{j'})}x_{i'}),
\mathrm{T_H}(x^{(\varepsilon_{j'}+\varepsilon_q)}x_{q'}]=0 \;
 \, \mbox{for distinct } i, j', \, q\in \mathbf{Y_0},$$
which implies that
$$\mathrm{T_H}\big(x^{(\varepsilon_i+\varepsilon_{j'})}x_{i'}
-x^{(\varepsilon_{j'}+\varepsilon_q)}x_{q'}\big)^p=0.$$
In conclusion, $A^{p}\in  \frak{g}^{(2)}(\underline{1})$ and hence $\frak{g}^{(2)}(\underline{1})$ is a restricted Lie superalgebra.

Suppose conversely  that $\frak{g}^{(2)}(\underline{t})$ is a restricted
Lie superalgebra. Then for every $i\in \mathbf{Y_0}$, $(\mathrm{ad}\partial_i)^p$
is   an inner derivation and
$(\mathrm{ad}\partial_i)^p$ is of $\mathbb{Z}$-degree $\geq -1$.
On the other hand we have $(\mathrm{ad}\partial_i)^p\in \mathrm{Der}_{-p}(\frak{g}^{(2)}(\underline{1}))$.
Consequently, $(\mathrm{ad}\partial_i)^p=0$ for all $i\in \mathbf{Y_0}$ which
 forces
$\underline{t}=\underline{1}.$ The proof is complete.
\end{proof}

The following lemma is simple but useful, the proof is similar to the one of the Lie algebra \cite[Proposition 3.3.5]{sf}.

\begin{lemma}\label{l2.3}
Let $L=\oplus_{i=-r}^{s}L_i$ be a simple, finite dimensional, and $\mathbb{Z}$-graded
Lie superalgebra. Then the following statements hold:
\item[$\mathrm{(1)}$]$L_{-r}$ and $L_s$ are irreducible $L_0$-modules.

\item[$\mathrm{(2)}$] $[L_0, L_s]=L_s,$ \quad $[L_0, L_{-r}]=L_{-r}.$

\item[$\mathrm{(3)}$] $C_{L_{s-1}}(L_1)=0,$ \quad $[L_{s-1}, L_1]=L_s.$

\item[$\mathrm{(4)}$] $C_L{(L^+)}=L_s,$ \quad $C_L{(L^-)}=L_{-r}.$
\end{lemma}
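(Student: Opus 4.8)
The plan is to prove the four statements in the order listed, since later parts will lean on earlier ones. The setting is a simple, finite-dimensional, $\mathbb{Z}$-graded Lie superalgebra $L=\oplus_{i=-r}^{s}L_i$, and the whole argument rests on simplicity together with the grading. I would begin by isolating the combinatorial fact that makes everything work: if $M\subseteq L_{-r}$ is a nonzero $L_0$-submodule, then the $\mathbb{Z}$-graded subspace generated by $M$ under repeated bracketing with $L^{+}=\oplus_{i\geq 0}L_i$ is an ideal of $L$. I would verify this by checking closure under bracketing with each graded piece $L_j$: brackets with $L_0$ preserve $M$ by the module property, brackets with $L_{j}$ for $j>0$ raise degree and stay inside the generated subspace by construction, and the negative-degree brackets are controlled because nothing sits below degree $-r$. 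The key point is that $M$ being an $L_0$-submodule in the bottom layer lets the positive part propagate it upward into a genuine ideal. Simplicity then forces this ideal to be all of $L$, and intersecting back with $L_{-r}$ gives $M=L_{-r}$; this is exactly irreducibility of $L_{-r}$ as an $L_0$-module. The statement for $L_s$ is symmetric, using $L^{-}=\oplus_{i\leq 0}L_i$ to propagate a top-layer submodule downward. This establishes part $(1)$.

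For part $(2)$, I would note that $[L_0,L_s]$ is an $L_0$-submodule of $L_s$ (by the Jacobi/super-Jacobi identity, since $[L_0,[L_0,L_s]]\subseteq [L_0,L_s]$). If it were zero, then $L_s$ would be central with respect to $L_0$; more importantly, I would argue that $[L_0,L_s]=0$ would let $L_s$ generate a proper ideal, contradicting simplicity. Concretely, if $[L_0,L_s]=0$ one checks that $\oplus_{i<s}L_i$ is an ideal: it is visibly closed under brackets among its own pieces and under brackets with $L_s$ precisely because $[L_0,L_s]=0$ kills the only way to land back in top degree and the higher brackets vanish for degree reasons. Since $L$ is simple and $L_s\neq 0$, this is impossible, so $[L_0,L_s]$ is a nonzero $L_0$-submodule, whence $[L_0,L_s]=L_s$ by the irreducibility from part $(1)$. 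The identity $[L_0,L_{-r}]=L_{-r}$ follows by the mirror-image argument.

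For part $(4)$, which I would treat next since it is cleanest, recall $C_L(L^{+})=\{x\in L\mid [x,L^{+}]=0\}$. I would first observe $L_s\subseteq C_L(L^{+})$: for $x\in L_s$ and $y\in L_j$ with $j\geq 0$, the bracket $[x,y]$ has degree $s+j>s$ when $j>0$, hence vanishes, and $[L_s,L_0]$—rather the containment—requires care, so I would instead decompose any centralizing element by degree and show each homogeneous component of degree $<s$ must vanish. Writing $x=\sum x_i\in C_L(L^{+})$, the top component interacts trivially by degree, and for the lower components I would use part $(3)$ together with an induction from the top down: $[x_i,L_1]=0$ forces $x_i=0$ for $i<s$ by the triviality of the relevant centralizer, the case $i=s-1$ being exactly $C_{L_{s-1}}(L_1)=0$ and the lower cases following by iterating. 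Thus $C_L(L^{+})=L_s$, and symmetrically $C_L(L^{-})=L_{-r}$.

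Part $(3)$ is where I expect the real obstacle, and I would prove it before part $(4)$ despite the listed order, since $(4)$ uses it. For $[L_{s-1},L_1]=L_s$: this bracket lands in $L_s$ and is an $L_0$-submodule, so by irreducibility it is either $0$ or all of $L_s$; ruling out $0$ is the crux. I would argue that $[L_{s-1},L_1]=0$ combined with the grading would again produce a proper nonzero ideal (one built from the top layers), contradicting simplicity—this mirrors the mechanism in part $(2)$ but now involves two adjacent layers, so the Jacobi identity bookkeeping with the super-signs $(-1)^{\mathrm{p}(\cdot)\mathrm{p}(\cdot)}$ is the delicate point. The centralizer claim $C_{L_{s-1}}(L_1)=0$ is the harder half: I would take $0\neq x\in L_{s-1}$ with $[x,L_1]=0$ and show it generates a proper ideal. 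The natural candidate is the ideal generated by $x$ under $L^{-}$; I must check that bracketing $x$ up by $L_1$ gives nothing (by hypothesis) while all other brackets either lower degree or are forced into a proper graded piece, so the generated ideal avoids $L_s$ and is therefore proper. The superalgebra signs and verifying that the positive part genuinely cannot reach $L_s$ from $x$ are exactly the steps requiring the most care; the finite-dimensionality guarantees the generated ideal is a well-defined graded subspace, and simplicity delivers the contradiction forcing $x=0$.
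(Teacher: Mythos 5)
Your part (1) is essentially the standard argument and is fine modulo the Jacobi-identity induction you leave implicit (the paper itself offers no details here --- it only cites \cite[Proposition 3.3.5]{sf}, whose proof is exactly the ideal-closure mechanism you sketch). The first genuine gap is in part (2): the subspace $\oplus_{i<s}L_i$ is \emph{not} ``visibly closed under brackets among its own pieces,'' because $[L_i,L_{s-i}]\subseteq L_s$ for $0<i<s$, and these brackets are in general nonzero --- indeed part (3) of the very lemma you are proving asserts $[L_{s-1},L_1]=L_s\neq 0$. Concretely, in the Witt algebra $W(1;1)$ (a purely even simple graded Lie superalgebra with $s=p-2$) one has $[x^{(2)}\partial,x^{(p-2)}\partial]=2x^{(p-1)}\partial\neq 0$. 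The hypothesis $[L_0,L_s]=0$ does nothing to kill such brackets, so your candidate is not even a subalgebra and the contradiction never materializes. A correct route is to prove (4) first, independently of (3), and then show from $[L_0,L_s]=0$, by downward induction on $a$ using Jacobi and $C_L(L^{+})=L_s$, that $[L_s,L_{-a}]=0$ for all $a\geq 1$; then $L_s$ is central, contradicting simplicity.

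The second gap is in part (4), and it cannot be repaired by ``iterating'' part (3): the vanishing of centralizers of $L_1$ is special to degree $s-1$ and fails in lower degrees. In $W(1;1)$ the element $x^{(2)}\partial$ spans $L_1$, centralizes $L_1$ (an even element always commutes with itself), and sits in degree $1<s$; in $W(1;2)$ the element $x^{(p-1)}\partial$ of degree $p-2<s$ also centralizes $L_1$. So an element killed by $L_1$ alone need not vanish, and your top-down induction collapses; one must use the full hypothesis $[x,L^{+}]=0$. The correct argument is again the ideal-closure one: $C:=C_L(L^{+})\cap\big(\oplus_{d<s}L_d\big)$ is graded, $L_0$-stable (by Jacobi, since $[L_0,L^{+}]\subseteq L^{+}$), and killed by $L^{+}$, so its closure under repeated $\mathrm{ad}(L^{-})$ is an ideal confined to degrees $<s$; it misses $L_s$, hence is proper, hence zero. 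The same construction proves the first half of (3), applied to $C_{L_{s-1}}(L_1)$, which is killed by all of $L^{+}$ for degree reasons --- your sketch of that half is close to this, except that the downward closure of $\mathbb{F}x$ alone need not be $L_0$-stable, so you must close under $L_0$ as well. The second half of (3) you justify only by ``mirroring the mechanism of part (2),'' so it inherits that flaw. Finally, note that (4) is only correct when $L^{\pm}$ denotes the \emph{strictly} positive/negative parts; if $L_0$ were included, (4) would contradict (2), so your hedging on this point should be resolved rather than sidestepped.
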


\begin{remark}\label{l2.5}
Let $T:=\mathrm{span}_{\mathbb{F}}\big\{T_{ij}=\mathrm{T_H}(x_ix_{i'}-x_jx_{j'}) \mid
i, j \in \mathbf{Y_0} , \, i\neq j\big\}.$ Obviously, $T$ is Abelian. From the proof of
Theorem \ref{t2.2} we know $(T_{ij})^p=T_{ij}$ which shows $T_{ij}$ is a toral. Consequently,
$T$ is a torus of $\frak{g}$.
In particular, $T$ is a torus of the restricted Lie superalgebra of $\frak{g}^{(2)}(\underline{1})$.
A direct computation shows that
\begin{eqnarray}
[T_{ij}, \mathrm{T_H}\big(x^{(\alpha)}x^u\big)]=(\delta_{i'\in u}-
\delta_{j'\in u}-\alpha_i+\alpha_j)\mathrm{T_H}\big(x^{(\alpha)}x^u\big).\label{05153}
\end{eqnarray}
Furthermore,
\begin{eqnarray*}
&&[T_{ij}, \mathrm{T_H}\bigg(x^{(\alpha)}x^{u}-\sum _{i\in
\mathbf{I}(\alpha,u)}\Gamma_{i}^{q}(x^{(\alpha)}x^{u})\bigg)]\\\nonumber
&=&(\delta_{i'\in u}-
\delta_{j'\in u}-\alpha_i+\alpha_j)\mathrm{T_H}\bigg(x^{(\alpha)}x^{u}-\sum _{i\in
\mathbf{I}(\alpha,u)}\Gamma_{i}^{q}(x^{(\alpha)}x^{u})\bigg),
\end{eqnarray*}
for $q\in \mathbf{Y_0}$.
\end{remark}

\begin{lemma}\label{l2.6}
Let $L=\oplus_{i=-r}^{s}L_i$ be a simple, finite-dimensional, and $\mathbb{Z}$-graded
Lie superalgebra. Let $M\subset L$ be a subalgebra that contains $L_{-1}\oplus L_1.$
If $M\cap L_{s-1}\neq 0,$ then $M=L.$
\end{lemma}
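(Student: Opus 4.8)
The plan is to exploit the graded structure of the simple Lie superalgebra $L$ together with the iterated bracket relations supplied by Lemma \ref{l2.3}. Set $N:=M\cap L_{s-1}$. By hypothesis $N\neq 0$. The idea is to show that, starting from a nonzero element of $N$, repeated bracketing with $L_{-1}$ and $L_1$ (both of which lie in $M$, hence keep us inside $M$) forces $M$ to contain enough of each graded piece $L_i$ to exhaust all of $L$. I would first record the two elementary facts that make this work: since $M$ is a subalgebra containing $L_{-1}\oplus L_1$, we have $[L_{-1},M\cap L_i]\subset M\cap L_{i-1}$ and $[L_1,M\cap L_i]\subset M\cap L_{i+1}$ for every $i$; and since $L$ is generated in degrees $-1,0,1$ in the relevant sense, transitivity of the grading will propagate membership.

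The first key step is to climb up to the top. From $N=M\cap L_{s-1}\neq 0$, I would bracket with $L_1$: by Lemma \ref{l2.3}(3) we have $[L_{s-1},L_1]=L_s$, and more importantly $C_{L_{s-1}}(L_1)=0$, so no nonzero element of $L_{s-1}$ is killed by all of $L_1$. Hence $[N,L_1]\neq 0$, and this bracket lands in $M\cap L_s$. The plan is to then invoke Lemma \ref{l2.3}(1), that $L_s$ is an irreducible $L_0$-module, together with Lemma \ref{l2.3}(2) saying $[L_0,L_s]=L_s$: one shows $M\cap L_s$ is a nonzero $L_0$-submodule of $L_s$ (here one needs $L_0\subset M$, which follows from $L_0=[L_{-1},L_1]\subset M$, using that $L$ is generated by its local part so that $[L_{-1},L_1]=L_0$), and irreducibility forces $M\cap L_s=L_s$. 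Thus $M\supseteq L_s$.

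The second key step is to descend and fill in the remaining degrees. With $L_s\subset M$ and $L_0\subset M$, I would use repeated bracketing with $L_{-1}\subset M$ to reach lower degrees: the transitivity of the grading (no nonzero element of $L_i$ with $i>-r$ centralizes $L_{-1}$, the dual statement to Lemma \ref{l2.3}(3)) guarantees $[L_{-1},L_i]$ captures all of $L_{i-1}$ once $L_i\subset M$. Starting from $L_s$ and stepping down yields $L_i\subset M$ for all $i$, and the bottom piece $L_{-r}$ is reached similarly using $C_{L_{-r}}$ considerations and part (4). Since $L=\oplus_{i=-r}^s L_i$ and every summand is shown to lie in $M$, we conclude $M=L$.

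The main obstacle I anticipate is the bookkeeping in the descent step: propagating $L_i\subset M$ downward requires the transitivity (faithfulness) statement that $C_{L_i}(L_{-1})=0$ for $i>-r$, which is the analogue for $L^-$ of part (3) and is exactly the kind of statement guaranteed for a simple graded Lie superalgebra but must be applied with care to the odd pieces, where signs in the super-bracket enter. The cleanest route is probably to prove the single clean statement ``$M\cap L_s\neq 0\Rightarrow M=L$'' by first getting $L_s\subset M$ via irreducibility, then observing that $C_L(M)$ would be a graded ideal; since $M\supseteq L_{-1}\oplus L_0\oplus L_1\oplus L_s$ generates $L$ (the local part generates, and $L_s$ together with $L^-$ fills the gap by transitivity), simplicity of $L$ leaves no room for a proper $M$. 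I expect the transitivity/generation argument to be the delicate part, while the initial ascent from $L_{s-1}$ to $L_s$ via $C_{L_{s-1}}(L_1)=0$ is routine.
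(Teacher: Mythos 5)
Your overall skeleton is the right one, and it is what the paper's one-line proof ("direct consequence of Lemma \ref{l2.3}") intends: pass to the graded pieces $M\cap L_i$, climb from $L_{s-1}$ to $L_s$ via $C_{L_{s-1}}(L_1)=0$, pull all of $L_s$ into $M$ by $L_0$-irreducibility, then descend with $\mathrm{ad}L_{-1}$. The ascent step is correct. However, the two statements that actually close the argument are asserted on invalid grounds, and they are exactly the nontrivial content. First, you obtain $L_0\subset M$ from ``$[L_{-1},L_1]=L_0$, using that $L$ is generated by its local part.'' Generation by the local part is neither a hypothesis of Lemma \ref{l2.6} nor part of Lemma \ref{l2.3}; it is false for simple graded Lie algebras in general (for $p>3$, $W(1;1)$ is simple and $\mathbb{Z}$-graded of depth one, yet its local part generates only a copy of $\mathfrak{sl}_2$); even when it holds, generation by $L_{-1}\oplus L_0\oplus L_1$ does not imply $[L_{-1},L_1]=L_0$; and in this paper the assumption would be circular in spirit, since Lemma \ref{l2.6} is precisely the tool used to prove the generation statement Lemma \ref{l2.7}. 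Second, in the descent you claim that transitivity, $C_{L_i}(L_{-1})=0$, ``guarantees $[L_{-1},L_i]$ captures all of $L_{i-1}$'': this conflates injectivity of $L_i\to \mathrm{Hom}(L_{-1},L_{i-1})$ with surjectivity of the bracket map; it only gives $[L_{-1},L_i]\neq 0$. (Your proposed alternative route has the same defect, and in addition $C_L(M)$ need not be an ideal of $L$ when $M$ is merely a subalgebra.)

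Both missing facts are true in the depth-one case $r=1$ (the only case the paper ever uses, since $\frak{g}^{(2)}=\oplus_{i=-1}^{\xi-5}(\frak{g}^{(2)})_i$), but they must be derived from simplicity, and this is the bridge your proof lacks. Put $J:=\sum_{k\geq 0}(\mathrm{ad}L_{-1})^k(L_s)$. Then $J$ is $\mathrm{ad}L_{-1}$-invariant by construction, and an induction on $k$ using the super Jacobi identity shows $[L_j,J]\subset J$ for every $j\geq 0$ as well: the base cases are $[L_j,L_s]\subset L_{s+j}=0$ for $j\geq 1$ and $[L_0,L_s]\subset L_s$, and the inductive step only produces terms with smaller $k$ or smaller $j$. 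Hence $J$ is a nonzero ideal of $L$, so $J=L$ by simplicity, and comparing degrees gives $L_{s-k}=(\mathrm{ad}L_{-1})^k(L_s)$ for all $k$. This yields at once $L_0=[L_{-1},L_1]\subset M$ (so $M\cap L_s$ is a nonzero $L_0$-submodule, whence $L_s\subset M$ by Lemma \ref{l2.3}(1) after your correct ascent step), and the descent $L_{s-k}=(\mathrm{ad}L_{-1})^k(L_s)\subset M$ for all $k$, i.e.\ $M=L$. For depth $r\geq 2$ the same argument (and your descent, and your appeal to part (4)) additionally requires that $L^{-}$ be generated by $L_{-1}$, an assumption not contained in Lemma \ref{l2.3}; since the paper applies the lemma only to depth-one algebras this is harmless, but without the ideal argument above your proof does not establish the lemma even in that case.
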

\begin{proof}
This is a direct consequence of  Lemma \ref{l2.3}.
\end{proof}

\begin{lemma}\label{l2.7}
$\frak{g}^{(2)}(\underline{1})$ is generated by
$\frak{g}^{(2)}(\underline{1})_{-1}\oplus \frak{g}^{(2)}(\underline{1})_1$.
\end{lemma}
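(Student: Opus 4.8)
The plan is to exploit the abstract machinery of Lemma \ref{l2.3} and Lemma \ref{l2.6} applied to the simple graded Lie superalgebra $L=\frak{g}^{(2)}(\underline{1})$, rather than to build up all the graded pieces by hand. Set $M$ to be the subalgebra generated by $L_{-1}\oplus L_1$; by construction $M$ is a $\mathbb{Z}$-graded subalgebra containing $L_{-1}\oplus L_1$, so it is exactly in the situation of Lemma \ref{l2.6}. By that lemma, to conclude $M=L$ it suffices to exhibit a single nonzero element of $L_{s-1}$ lying in $M$, where $s=\xi-5$ is the top degree of $\frak{g}^{(2)}(\underline{1})$. Thus the whole problem reduces to the single claim that $M\cap L_{s-1}\neq 0$.

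First I would record the standard facts I need. By item (iv) of the cited results from \cite{lh}, for the restricted case $\underline{t}=\underline{1}$ we have the grading $\frak{g}^{(2)}=\oplus_{i=-1}^{s}(\frak{g}^{(2)})_i$ with $s=\xi-5$, together with the generation relation $(\frak{g}^{(2)})_{i-1}=[(\frak{g}^{(2)})_{-1},(\frak{g}^{(2)})_{i}]$ for $0\leq i\leq s$. This tells me that the negative part is generated from the top downward by bracketing with $L_{-1}$, so the real content is to produce the graded pieces of nonnegative degree inside $M$. Since $M$ already contains $L_0=[L_{-1},L_1]$ (which follows from Lemma \ref{l2.3}(3) applied in degree $0$, or directly since $L_{-1},L_1\in M$), and $M$ is closed under brackets, the accessible part of $M$ in positive degrees is the subalgebra generated by $L_1$ over $L_0$.

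The main obstacle, and the step requiring genuine computation, is therefore establishing that iterated brackets of degree-$1$ elements reach all the way up to degree $s-1$, i.e.\ that $[L_1,[L_1,\ldots,[L_1,L_1]\ldots]]$ eventually meets $L_{s-1}$ nontrivially. I would attack this concretely using the spanning set (\ref{eqbl1504}): the degree-$1$ space is spanned by $\mathrm{T_H}$ of leaders $x^{(\alpha)}x^u$ with $|\alpha|+|u|=2$, and I would identify an explicit product of such generators whose bracket lands on a nonzero leader of top-minus-one degree. The weight-space bookkeeping is made tractable by the torus $T$ from Remark \ref{l2.5}: formula (\ref{05153}) shows every $\mathrm{T_H}(x^{(\alpha)}x^u)$ is a $T$-weight vector with an explicit weight, so I can track which weights are produced and confirm the bracket of chosen generators is nonzero by checking its weight is not shared by any cancelling term. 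Concretely, I expect to climb degree-by-degree, using the relation $[\mathrm{T_H}(a),\mathrm{T_H}(b)]=\mathrm{T_H}(\mathrm{T_H}(a)(b))$ to compute products inside $\mathcal{O}(m,m;\underline{1})$, and to verify that one reaches an element of $L_{s-1}$ of the form described in the $(\frak{g}^{(1)})_{\xi-4}$ spanning set (shifted down one degree). Once a single such nonzero element is shown to lie in $M$, Lemma \ref{l2.6} immediately gives $M=\frak{g}^{(2)}(\underline{1})$, completing the proof.
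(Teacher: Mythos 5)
Your reduction via Lemma \ref{l2.6} is sound, and it is in fact exactly how the paper handles the case $m>3$: since $M\supset L_{-1}\oplus L_1$, one only needs to exhibit a nonzero element of $M\cap L_{s-1}$, where $s=\xi-5=mp-5$. The problem is that you never produce such an element; everything after the reduction is stated in the conditional (``I would identify an explicit product\ldots'', ``I expect to climb degree-by-degree\ldots''), and that deferred computation is the entire mathematical content of the lemma. It is not a routine verification. A degree-by-degree climb by bracketing with $L_1$ runs into genuine obstructions in characteristic $p$: the structure constants that arise are integers such as $\frac{\alpha_1(\alpha_1-1)}{2}-\alpha_1\alpha_2$, which can vanish modulo $p$ (for instance when $\alpha_1=\alpha_2=p-1$), so for particular leaders the obvious bracket gives $0$ and a replacement bracket must be found case by case. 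The paper's proof of the base case $m=3$ consists precisely of this case analysis (leaders with $|u|=0,1,2$, with separate arguments whenever a coefficient is divisible by $p$), and nothing in your proposal addresses how you would get past these degenerate configurations, nor how the number of cases would be controlled for general $m$, where leaders with $|u|$ up to $m$ occur.

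The idea your proposal is missing is the paper's induction on $m$, which is what makes degree $s-1$ reachable without a global climb. For $m>3$ the paper embeds two graded subalgebras $\overline{L}\cong\frak{g}^{(2)}(3,3;\underline{1})$ (built from $x_1,x_2,x_3$ and their primed partners) and $L'\cong\frak{g}^{(2)}(m-3,m-3;\underline{1})$ (built from the remaining variables). Their degree $\pm1$ components lie in $L_{\pm 1}\subset M$, so the induction hypothesis applied to these smaller algebras gives $\overline{L}\subset M$ and $L'\subset M$ at once; this places two explicit high-degree elements $\overline{\Omega}\in\overline{L}$ and $\Omega'\in L'$ into $M$ for free. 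A short chain of explicit brackets ($\Omega$, then $B=[\Omega,\overline{\Omega}]$, then $C=[\Omega',B]$, then $[D,C]$) then lands in $M\cap\big(\frak{g}^{(2)}(\underline{1})\big)_{mp-6}=M\cap L_{s-1}$, and Lemma \ref{l2.6} finishes. So your outline identifies the correct target but omits both the mechanism that reaches it (induction on $m$ with embedded subalgebras) and the unavoidable mod-$p$ case analysis in the base case; as written it is a plan for a proof, not a proof.
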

\begin{proof}
Recall that $\frak{g}^{(2)}(\underline{1})=\frak{g}^{(2)}(m,m;\underline{1})$ is a graded subalgebra of $W(\underline{1})$.
Let $M$ denote the subalgebra generated
by $\frak{g}^{(2)}(\underline{1})_{-1}\oplus \frak{g}^{(2)}(\underline{1})_1$.
We proceed by induction on $m$.

Suppose  $m=3.$ Assume that $\frak{g}^{(2)}(3,3;\underline{1})_i\subset M$
for some $i \in \overline{1, 3p-6}$, and let $A$ be an element of
 $\frak{g}^{(2)}(3,3;\underline{1})_{i+1}$ with a leader $x^{(\alpha)}x^u$ (cf (\ref{eqbl1504})), that is
\begin{equation*}
A=\mathrm{T_H}\bigg(x^{(\alpha)}x^{u}-\sum _{i\in
\mathbf{I}(\alpha,u)}\Gamma_{i}^{q}(x^{(\alpha)}x^{u})\bigg)\quad q\in \widetilde{I}(\alpha, u),\quad x^{(\alpha)}x^{u} \in\mathcal{D}.
\end{equation*}
 Note that
$\widetilde{\mathbf{I}}(\alpha,u)\neq\emptyset$.
It is clear that $4\leq|\alpha|+|u|\leq 3p-3.$
Let us show $A \in M.$
We only have to consider  the following cases:\\

\noindent\textbf{Case 1:} $|u|=0$ and $|\alpha|\geq 4.$ One may assume without loss of the generality that $\alpha_1\geq 2.$ Then
\begin{eqnarray*}
 \bigg(\frac{\alpha_1(\alpha_1-1)}{2}-\alpha_1\alpha_2\bigg)
\mathrm{T_H}(x^{(\alpha)})
=\big[\mathrm{T_H}\big(x^{(\alpha-\varepsilon_1)}\big), \mathrm{T_H}\big(x^{(2\varepsilon_1)}
x_{1'}-x^{(\varepsilon_1+\varepsilon_2)}x_{2'}\big)\big].
\end{eqnarray*}

\item[$\mathrm{(i)}$] If $\alpha_2=0$, the equation shows that $\mathrm{T_H}(x^{(\alpha)}) \in  M.$

\item[$\mathrm{(ii)}$] If $\alpha_2\neq 0$, we have
\begin{eqnarray*}
 \bigg(\frac{\alpha_2(\alpha_2-1)}{2}-\alpha_1\alpha_2\bigg)
\mathrm{T_H}(x^{(\alpha)})=
\big[\mathrm{T_H}\big(x^{(\alpha-\varepsilon_2)}\big),
\mathrm{T_H}\big(x^{(2\varepsilon_2)}x_{2'}-x^{(\varepsilon_1+\varepsilon_2)}x_{1'}\big)\big].
\end{eqnarray*}
If
$$\frac{\alpha_1(\alpha_1-1)}{2}-\alpha_1\alpha_2\equiv
\frac{\alpha_2(\alpha_2-1)}{2}-\alpha_1\alpha_2\equiv 0\pmod{p}$$
 we obtain
 $\alpha_1=\alpha_2=p-1.$
Then $\alpha_3<p-1.$ Since
\begin{eqnarray*}
(1+\alpha_3)\mathrm{T_H}(x^{(\alpha)})
=\big[\mathrm{T_H}\big(x^{(\alpha-\varepsilon_1)}\big), \mathrm{T_H}\big(x^{(2\varepsilon_1)}x_{1'}-x^{(\varepsilon_1+\varepsilon_3)}x_{3'}\big)\big],
\end{eqnarray*}
we have $\mathrm{T_H}(x^{(\alpha)})\in M$.\\

\noindent \textbf{Case 2:} $|u|=1, \, |\alpha|\geq 3.$
One may assume without loss of generality that $u=\{1'\}$.

\item[$\mathrm{(i)}$] Suppose  $\alpha_1=0$ and $\alpha_2\geq 2$. We have
\begin{eqnarray*}
&&\bigg(\frac{\alpha_2(\alpha_2-1)}{2}+\alpha_2\bigg)
\mathrm{T_H}(x^{(\alpha)}x_{1'})\\
&=&\big[\mathrm{T_H}\big(x^{(\alpha-\varepsilon_2)}x_{1'}\big), \mathrm{T_H}\big(x^{(2\varepsilon_2)}x_{2'}-x^{(\varepsilon_1+\varepsilon_2)}x_{1'}\big)\big],
\end{eqnarray*}
and then $\mathrm{T_H}(x^{(\alpha)}x_{1'})\in M$ if $\frac{\alpha_2(\alpha_2-1)}{2}+\alpha_2\not\equiv 0\pmod{p}.$
On the other hand, if
$\frac{\alpha_2(\alpha_2-1)}{2}+\alpha_2\equiv 0 \pmod{p},$
we obtain $\alpha_2=p-1$.
Then $\alpha_3<p-1.$ Since
\begin{eqnarray*}
 (1+\alpha_3)
\mathrm{T_H}(x^{(\alpha)}x_{1'})=\big[\mathrm{T_H}\big(x^{(\alpha-\varepsilon_2)}x_{1'}\big),
 \mathrm{T_H}\big(x^{(2\varepsilon_2)}x_{2'}-x^{(\varepsilon_2+\varepsilon_3)}x_{3'}\big)\big],
\end{eqnarray*}
we have $A\in M$.\\

\noindent\item[$\mathrm{(ii)}$] Suppose  $\alpha_1>0$. One can assume that $\alpha_2< p-1.$
From Case 2 (i) we have, when $\alpha_3<p-1$
\begin{eqnarray*}
&&\mathrm{T_H}\big(x^{(\alpha)}x_{1'}-\Gamma_1^2x^{(\alpha)}x_{1'}\big)\\
&=&-\big[\mathrm{T_H}\big(x^{(\alpha_1\varepsilon_1)}x_{2'}\big),
\mathrm{T_H}\big(x^{((\alpha_2+1)\varepsilon_2+\alpha_3\varepsilon_3)}x_{1'}
\big)\big]\in M.
\end{eqnarray*}
When $\alpha_3=p-1$
\begin{eqnarray*}
&&\mathrm{T_H}\big(x^{(\alpha)}x_{1'}-\Gamma_1^2x^{(\alpha)}x_{1'}\big)\\
&=&\big[\mathrm{T_H}\big(x^{(\alpha_1\varepsilon_1+\varepsilon_3)}x_{2'}\big),
\mathrm{T_H}\big(x^{((\alpha_2+1)\varepsilon_2+(\alpha_3-1)\varepsilon_3)}x_{1'}
\big)\big]\in M.
\end{eqnarray*}
Similarly, we can obtain $\mathrm{T_H}(x^{(\alpha)}x_{1'}-\Gamma_1^3x^{(\alpha)}x_{1'})\in M$,
when $\alpha_3<p-1$.\\

\noindent\textbf{Case 3:} $|u|=2, \, |\alpha|\geq2.$
One can assume that  $u=\{1', 2'\}$, $\alpha_3<p-1$.

\item[$\mathrm{(i)}$] Suppose  $\alpha_1=\alpha_2=0$. Applying  case 2 (i) we have
\begin{eqnarray*}
&&\mathrm{T_H}\big(x^{(\alpha)}x_{1'}x_{2'}\big)\\
&=&-\frac{1}{\alpha_3+1}\big[\mathrm{T_H}\big(x^{(\alpha)}x_{1'}\big),
\mathrm{T_H}\big(x^{(\varepsilon_3)}x_{2'}x_{3'}-\Gamma_3^1x^{(\varepsilon_3)}x_{2'}x_{3'}\big)\big]\in M.
\end{eqnarray*}

\noindent\item[$\mathrm{(ii)}$] Suppose  $\alpha_1> 0$,  $\alpha_1<p-1$ or $\alpha_2<p-1$.
From Case 2 (ii) and Case 3 (i)
we have
\begin{eqnarray*}
&&(\alpha_3+1)\mathrm{T_H}\bigg(x^{(\alpha)}x_{1'}x_{2'}-
\sum_{r\in \mathbf{I}(\alpha, \{1', 2'\})}\Gamma_r^3x^{(\alpha)}x_{1'}x_{2'}\bigg)\\
&=&\big[\mathrm{T_H}\big(x^{(\alpha_3\varepsilon_3)}x_{1'}x_{2'}\big), \mathrm{T_H}\big(x^{(\alpha-(\alpha_3-1)\varepsilon_3)}x_{3'}-
\Gamma_3^qx^{(\alpha-(\alpha_3-1)\varepsilon_3)}x_{3'}\big)\big]\in M,
\end{eqnarray*}
where $q=1$ or $2$ such that  $\alpha_q< p-1.$\\

\noindent\item[$\mathrm{(iii)}$]  Suppose  $\alpha_1=\alpha_2=p-1$, $\alpha_3<p-2$.
Applying Cases 2 and 3 (ii), we obtain
\begin{eqnarray*}
&&\mathrm{T_H}\bigg(x^{(\alpha)}x_{1'}x_{2'}-
\sum_{r\in \{1, 2\}}\Gamma_r^3x^{(\alpha)}x_{1'}x_{2'}\bigg)\\
&=&-\big[\mathrm{T_H}\big(x^{(p-2)\varepsilon_1}x_{2'}x_{3'}\big), 
\mathrm{T_H}\big(x^{(\varepsilon_1+(p-1)\varepsilon_2+(\alpha_3+1)\varepsilon_3)}x_{1'}-
x^{((p-1)\varepsilon_2+(\alpha_3+2)\varepsilon_3)}x_{3'}\big)\big]\\
&&\in M.
\end{eqnarray*}

Now suppose  $m>3.$ Let
$$\overline{L}:=\mathrm{span}_{\mathbb{F}}\{\overline{X}\},
$$
where
\begin{eqnarray*}
\overline{X}:=\left\{\mathrm{T_H}\big(x^{(\alpha)}x^u-\sum_{r\in \mathbf{I}(\alpha, u)}\Gamma_r^qx^{(\alpha)}x^u\big)
\left|\begin{array}{l}
\alpha=\alpha_1\varepsilon_1+\alpha_2\varepsilon_2+\alpha_3\varepsilon_3\\
u\subset \{1', 2', 3',\}\\
|\alpha|+|u|\leq 3p-3\\
x^{(\alpha)}x^u\in \mathcal{D},\quad q\in \widetilde{I}(\alpha, u)
\end{array}
\right.
\right\}
\subset\frak{g}^{(2)}(\underline{1}).
\end{eqnarray*}
Let
$${L'}:=\mathrm{span}_{\mathbb{F}}\{X'\},
$$
where
\begin{eqnarray*}
{X'}:=\left\{\mathrm{T_H}\big(x^{(\alpha)}x^u-\sum_{r\in \mathbf{I}(\alpha, u)}\Gamma_r^qx^{(\alpha)}x^u\big)
\left|\begin{array}{l}
\alpha_i=0, \, i\in \{1, 2, 3\}\\
u\subset \omega':=\omega-\{1', 2', 3'\}\\
|\alpha|+|u|\leq (m-3)p-3\\
x^{(\alpha)}x^u\in \mathcal{D},\quad q\in \widetilde{I}(\alpha, u)
\end{array}
\right.
\right\}
\subset\frak{g}^{(2)}(\underline{1}).
\end{eqnarray*}
Obviously,  $\overline{L}\cong \frak{g}^{(2)}(3,3;\underline{1})$ and
${L'}\cong \frak{g}^{(2)}(m-3,m-3;\underline{1})$.

Let $\overline{\mathcal{E}}:=(p-1)(\varepsilon_1+\varepsilon_2+\varepsilon_3)$,
$\mathcal{E'}:=(p-1)(\varepsilon_4+\cdots+\varepsilon_m)$,
 $\mathcal{E}:=\overline{\mathcal{E}}+\mathcal{E'}$.
The induction hypothesis are applied to these algebras yield:
\begin{eqnarray*}
&&\overline{\Omega}:=\mathrm{T_H}\big(x^{(\overline{\mathcal{E}}-\varepsilon_1)}x_{3'}
-x^{(\overline{\mathcal{E}}-\varepsilon_3)}x_{1'}\big)\in \overline{L}\subset M,\\
&&\Omega':=\mathrm{T_H}\bigg(x^{(\mathcal{E'}-\varepsilon_{m-1}-\varepsilon_m)}x^{\omega'-\langle{m-1}'\rangle}\\
&&\quad \quad-\sum_{r\in \mathcal{A}}
\Gamma_r^{m-1}x^{(\mathcal{E'}-\varepsilon_{m-1}-\varepsilon_m)}x^{\omega'-\langle{m-1}'\rangle}\bigg) \in L'\subset M,
\end{eqnarray*}
where $\mathcal{A}=\mathbf{I}(\mathcal{E'}-\varepsilon_{m-1}-\varepsilon_m, \omega'-\langle{m-1}'\rangle)$.
Noting that
\begin{eqnarray*}
\Omega&:=&\mathrm{T_H}
\big(x^{(\varepsilon_1+\varepsilon_2+\varepsilon_{m-1})}x_{2'}-
x^{(\varepsilon_1+\varepsilon_{m-1}+\varepsilon_m)}x_{m'}\big)\\
&=&\big[\mathrm{T_H}\big(x^{(\varepsilon_{m-1}+\varepsilon_m)}x_{2'}\big), \mathrm{T_H}\big(x^{(\varepsilon_1+\varepsilon_2)}x_{m'}\big)]\in M
\end{eqnarray*}
and
\begin{eqnarray*}
B:=[\Omega, \overline{\Omega}]=-\mathrm{T_H}\big(x^{(\overline{\mathcal{E}}+
\varepsilon_{m-1})}x_{3'}-
x^{(\overline{\mathcal{E}}-\varepsilon_3+\varepsilon_{m-1}+\varepsilon_m)}x_{m'}\big)\in M,
\end{eqnarray*}
 we have
\begin{eqnarray*}
C:=[\Omega', B]
&=&\mathrm{T_H}\bigg(x^{(\mathcal{E}-\varepsilon_3
-\varepsilon_{m})}x^{\omega'-\langle{m-1}'\rangle}\\
&&\quad \quad-\sum_{r\in \mathcal{A}}
\Gamma_r^3x^{(\mathcal{E}-\varepsilon_3
-\varepsilon_{m})}x^{\omega'-\langle{m-1}'\rangle}\bigg)\\
&&\in M\cap \big(g^{(2)}(\underline{1})\big)_{mp-8}.
\end{eqnarray*}
Note that
$$\mathbf{I}(\mathcal{E}-\varepsilon_3
-\varepsilon_{m}, \omega'-\langle{m-1}'\rangle)=
\mathbf{I}(\mathcal{E'}-\varepsilon_{m-1}-\varepsilon_m, \omega-\langle{m-1}'\rangle).$$
Putting
\begin{eqnarray*}
D&:=&\mathrm{T_H}\big(x^{(\varepsilon_m)}x_{1'}x_{2'}x_{m-1'}\big)\\
&=&-\frac{1}{2}\big[\mathrm{T_H}\big(x^{(\varepsilon_m)}x_{1'}x_{2'}\big),
\mathrm{T_H}\big(x^{(\varepsilon_m)}x_{m-1'}x_{m'}+
x^{(\varepsilon_1)}x_{1'}x_{m-1'}\big)\big]\in M,
\end{eqnarray*}
we have
\begin{eqnarray*}
[D, C]&=&\mathrm{T_H}\bigg(x^{(\mathcal{E}-\varepsilon_3
-\varepsilon_{m})}x^{\omega-\langle 3'\rangle-\langle m'\rangle}\\
&&\quad\quad\quad\quad
-\sum_{r'\in \omega\backslash\{3',\,m'\}}\Gamma_r^3 x^{(\mathcal{E}-\varepsilon_3
-\varepsilon_{m})}x^{\omega-\langle 3'\rangle-\langle m'\rangle}\bigg)\\
&+&(-1)^{m-5}\mathrm{T_H}\bigg(x^{(\mathcal{E}-\varepsilon_1
-\varepsilon_{3})}x^{\omega-\langle 1'\rangle-\langle 3'\rangle}\\
&&\quad\quad\quad\quad
-\sum_{r'\in \omega\backslash\{1', \,3'\}}\Gamma_r^3 x^{(\mathcal{E}-\varepsilon_1
-\varepsilon_{3})}x^{\omega-\langle 1'\rangle-\langle 3'\rangle}\bigg)\\
&-&(-1)^{m-5}\mathrm{T_H}\bigg(x^{(\mathcal{E}-\varepsilon_2
-\varepsilon_{3})}x^{\omega-\langle 2'\rangle-\langle 3'\rangle}\\
&&\quad\quad\quad\quad
-\sum_{r'\in \omega\backslash\{2', \,3'\}}\Gamma_r^3 x^{(\mathcal{E}-\varepsilon_2
-\varepsilon_{3})}x^{\omega-\langle 2'\rangle-\langle 3'\rangle}\bigg)\\
&+&\mathrm{T_H}\bigg(x^{(\mathcal{E}-\varepsilon_3
-\varepsilon_{m-1})}x^{\omega-\langle 3'\rangle-\langle m-1'\rangle}\\
&&\quad\quad
-\sum_{r'\in \omega\backslash\{3', \,m-1'\}}\Gamma_r^3 x^{(\mathcal{E}-\varepsilon_3
-\varepsilon_{m-1})}x^{\omega-\langle 3'\rangle-\langle m-1'\rangle}\bigg)\\
&&\in M\cap \big(g^{(2)}(\underline{1})\big)_{mp-6}.
\end{eqnarray*}
Applying  Lemma \ref{l2.6}, we have
$\frak{g}^{(2)}(\underline{1})=M$, which is generated by
$\frak{g}^{(2)}(\underline{1})_{-1}\oplus \frak{g}^{(2)}(\underline{1})_1$.
\end{proof}
\begin{lemma}\label{l2.8}
Let $L=\oplus_{i=-r}^{s}L_i$ be a $\mathbb{Z}$-graded and centerless Lie superalgebra and $T\subset L_0\cap L_{\bar{0}}$ be an
Abelian subalgebra of $L$  such that $\mathrm{ad}x$ is semisimple for all $x \in T$.
If $\varphi \in \mathrm{Der}_{\mathbb{F}}(L)$ is homogeneous of
degree $t$, there is $e \in L_t$ such that $\big(\varphi-\mathrm{ad}e\big)\big|_T=0.$
\end{lemma}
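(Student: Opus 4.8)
The plan is to diagonalize $\mathrm{ad}\,T$, solve for $e$ one weight at a time, and invoke centerlessness only to kill the troublesome zero-weight part; this is the super-analogue of the Lie-algebra argument in \cite[Proposition 3.3.5]{sf}. First I would record that, since $T$ is Abelian and every $\mathrm{ad}\,x$ $(x\in T)$ is semisimple, the commuting family $\{\mathrm{ad}\,x\mid x\in T\}$ is simultaneously diagonalizable, so that $L$ admits a weight-space decomposition $L=\bigoplus_{\lambda\in T^{\ast}}L^{\lambda}$, where $L^{\lambda}:=\{y\in L\mid [x,y]=\lambda(x)y\ \text{for all}\ x\in T\}$ (the weights taking values in $\mathbb{F}$). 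Because $T\subseteq L_{0}\cap L_{\bar0}$, this refines the $\mathbb{Z}$-grading, giving $L_{i}=\bigoplus_{\lambda}(L_{i}\cap L^{\lambda})$ and $[L^{\lambda},L^{\mu}]\subseteq L^{\lambda+\mu}$.

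Next, for $x\in T$ I would write $\varphi(x)=\sum_{\mu}w_{\mu}(x)$ with $w_{\mu}(x)\in L_{t}\cap L^{\mu}$ (here $\varphi(x)\in L_{t}$ since $\varphi$ has degree $t$ and $x\in L_{0}$). Applying the derivation identity to $[x,x']=0$ for $x,x'\in T$ — where the sign $(-1)^{\mathrm{p}(\varphi)\mathrm{p}(x)}$ is trivial because $\mathrm{p}(x)=\bar0$ — yields $0=[\varphi(x),x']+[x,\varphi(x')]$, and projecting onto the weight-$\mu$ component gives $\mu(x')w_{\mu}(x)=\mu(x)w_{\mu}(x')$ for all $x,x'\in T$. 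For each nonzero weight $\mu$ that occurs I would choose $x_{\mu}\in T$ with $\mu(x_{\mu})\neq0$ and set $e_{\mu}:=-\mu(x_{\mu})^{-1}w_{\mu}(x_{\mu})\in L_{t}\cap L^{\mu}$; the displayed relation then forces $w_{\mu}(x)=-\mu(x)e_{\mu}=[e_{\mu},x]$ for every $x\in T$.

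It remains to eliminate the zero-weight component $w_{0}(x)$, and this is the one genuinely non-formal step, resolved precisely by the centerless hypothesis. For a homogeneous $y\in L_{i}\cap L^{\alpha}$ I would apply the derivation identity to $[x,y]=\alpha(x)y$, again with trivial sign, obtaining $\alpha(x)\varphi(y)=[\varphi(x),y]+[x,\varphi(y)]$. Comparing the weight-$\alpha$ components of both sides — noting that $[w_{\mu}(x),y]$ lies in weight $\mu+\alpha$, so only $\mu=0$ contributes to weight $\alpha$, and that the term $[x,\varphi(y)]$ exactly cancels $\alpha(x)\varphi(y)$ in that weight — collapses the identity to $0=[w_{0}(x),y]$. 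Since this holds for all homogeneous $y$, hence for all $y\in L$, we get $w_{0}(x)\in C_{L}(L)=0$, so $w_{0}(x)=0$ for every $x\in T$.

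Finally I would put $e:=\sum_{\mu\neq0}e_{\mu}\in L_{t}$ and compute $(\mathrm{ad}\,e)(x)=\sum_{\mu\neq0}[e_{\mu},x]=\sum_{\mu\neq0}w_{\mu}(x)=\varphi(x)-w_{0}(x)=\varphi(x)$ for all $x\in T$, which is exactly $(\varphi-\mathrm{ad}\,e)|_{T}=0$. The main obstacle is the vanishing of $w_{0}$; once centerlessness disposes of it, the rest is routine weight-space bookkeeping.
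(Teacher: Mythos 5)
Your proof is correct and takes essentially the same route as the paper: the paper's proof of this lemma is just the citation to \cite[Proposition 8.4]{sf}, whose argument is precisely your weight-space decomposition of $L$ under the commuting semisimple family $\mathrm{ad}\,T$, with the nonzero-weight components of $\varphi|_T$ realized as coboundaries $[e_\mu,\cdot\,]$ and the zero-weight component annihilated via centerlessness. The only implicit convention (shared with the cited source and satisfied in the paper's applications, where the toral elements satisfy $T_{ij}^{p}=T_{ij}$) is that semisimplicity of the operators $\mathrm{ad}\,x$ together with finite-dimensionality gives a simultaneous eigenspace decomposition over $\mathbb{F}$ itself.
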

\begin{proof}
The proof  is similar to the one of \cite[Proposition 8.4]{sf}.
\end{proof}
\begin{convention}\label{c2} 
Hereafter we suppose  $m>3$ for simplicity.
\end{convention}
\begin{theorem}\label{t2.9} 
$\mathrm{Der}^-(\frak{g}^{(2)}(\underline{1}))=\sum_{i\in \mathbf{Y}}\mathbb{F}\mathrm{ad}(\partial_i).$
\end{theorem}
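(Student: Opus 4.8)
The plan is to compute $\mathrm{Der}^-$ degree by degree: I would reduce every homogeneous negative derivation, via Lemma~\ref{l2.8}, to one that annihilates the torus $T$ of Remark~\ref{l2.5}, show by a transitivity argument that such a derivation is supported in a single $\mathbb{Z}$-degree, and then kill that remaining piece by a weight-space argument.

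First I would fix a homogeneous $\varphi\in\mathrm{Der}_t(\frak{g}^{(2)}(\underline{1}))$ with $t<0$ and write $L:=\frak{g}^{(2)}(\underline{1})=\bigoplus_{i=-1}^{s}L_i$, which is simple, so that Lemma~\ref{l2.3} applies; note $L_{-1}=\mathrm{span}_{\mathbb{F}}\{\partial_i\mid i\in\mathbf{Y}\}$ and hence $\mathrm{ad}(L_{-1})=\sum_{i\in\mathbf{Y}}\mathbb{F}\,\mathrm{ad}(\partial_i)$. Applying Lemma~\ref{l2.8} to $T$ produces $e\in L_t$ with $\psi:=\varphi-\mathrm{ad}\,e$ vanishing on $T$: when $t\leq-2$ we have $L_t=0$, so $e=0$ and $\psi=\varphi$, whereas when $t=-1$ we have $e\in L_{-1}$ and $\mathrm{ad}\,e$ already lies in the asserted right-hand side. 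Thus in every case it remains to prove $\psi=0$.

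The core reduction is a transitivity induction that does not yet use weights. Since $t<0$ and the grading begins in degree $-1$, we have $\psi(L_{-1})\subseteq L_{t-1}=0$. Assuming inductively $\psi(L_{i-1})=0$, for $a\in L_{-1}$ and $x\in L_i$ the super-derivation identity together with $\psi(a)=0$ gives $[a,\psi(x)]=\pm\psi([a,x])=0$; hence $\psi(x)\in C_L(L^-)=L_{-1}$ by Lemma~\ref{l2.3}(4), so $\psi(x)\in L_{i+t}\cap L_{-1}$. This intersection is zero unless $i+t=-1$, so the induction forces $\psi(L_i)=0$ for every $i\neq i_0:=-1-t$ and leaves exactly one critical degree, on which $\psi$ restricts to a map $L_{i_0}\longrightarrow L_{-1}$.

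It therefore remains to show $\psi|_{L_{i_0}}=0$, and here the hypothesis $\psi|_T=0$ finally enters: by the derivation identity $\psi$ commutes with $\mathrm{ad}\,T$, so it preserves the $T$-weight decomposition while lowering $\mathbb{Z}$-degree by $t$, mapping the $\mu$-weight space of $L_{i_0}$ into the $\mu$-weight space of $L_{-1}$. The nonzero $T$-weights occurring in $L_{-1}$ are precisely those of the $\partial_i$, which by (\ref{05153}) are pairwise distinct and each of multiplicity one (this is where $p>3$ and $m>3$ are needed, to rule out coincidences of these weights modulo $p$). Hence a weight-homogeneous $x\in L_{i_0}$ can satisfy $\psi(x)\neq0$ only if its weight is one of these finitely many functionals. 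The final step is to run through the spanning leaders $x^{(\alpha)}x^{u}$ of $L_{i_0}$ from (\ref{eqbl1504}), compute their weights $\lambda_k=\delta_{k'\in u}-\alpha_k$ through (\ref{05153}), and verify that $\psi$ annihilates each matching weight space by bracketing against the explicit generators of $L_{1}$. This last verification is the main obstacle. Once $\psi|_{L_{i_0}}=0$ is established, $\psi=0$, whence $\varphi=\mathrm{ad}\,e\in\sum_{i\in\mathbf{Y}}\mathbb{F}\,\mathrm{ad}(\partial_i)$, and the reverse inclusion is clear since each $\partial_i\in L_{-1}$.
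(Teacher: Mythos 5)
Your setup matches the paper's opening move: both reduce, via Lemma \ref{l2.8}, to a homogeneous derivation $\psi$ of degree $t<0$ vanishing on the torus $T$ of Remark \ref{l2.5}, and both then aim at a weight-space argument. But your proposal has a genuine gap exactly where the real work lies. You end with: ``verify that $\psi$ annihilates each matching weight space by bracketing against the explicit generators of $L_1$. This last verification is the main obstacle.'' That verification is not a routine check that can be deferred --- it \emph{is} the proof. In the paper it occupies Cases 1 and 2: one needs the explicit relations
$[T_{ij},\mathrm{T_H}(x^{(3\varepsilon_i)})]=-3\mathrm{T_H}(x^{(3\varepsilon_i)})$,
$[\mathrm{T_H}(x^{(3\varepsilon_i)}),\mathrm{T_H}(x^{(\varepsilon_k)}x_{j'})]=0$,
$[T_{ik},\mathrm{T_H}(x^{(2\varepsilon_i)})]=-2\mathrm{T_H}(x^{(2\varepsilon_i)})$,
$[T_{kl},\mathrm{T_H}(x_{i'}x_{j'})]=0$,
together with the generation facts that $\frak{g}^{(2)}(\underline{1})_1$ is generated by $\frak{g}^{(2)}(\underline{1})_0\oplus\sum_{i\in\mathbf{Y_0}}\mathbb{F}\mathrm{T_H}(x^{(3\varepsilon_i)})$ and that $\frak{g}^{(2)}(\underline{1})_0$ is generated by $T$ together with the $\mathrm{T_H}(x^{(2\varepsilon_i)})$ and $\mathrm{T_H}(x_{i'}x_{j'})$, plus a final transitivity step. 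Without carrying out some such computation, nothing beyond the (standard) torus reduction has been established.

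Moreover, your route makes the unproved step strictly harder than it needs to be, because you never invoke Lemma \ref{l2.7} (generation of $\frak{g}^{(2)}(\underline{1})$ by $L_{-1}\oplus L_1$). The paper uses it immediately: since $\psi(L_{-1})=0$ automatically and $\psi(L_1)\subseteq L_{1+t}=0$ whenever $t\leq-3$, only $t\in\{-1,-2\}$ survive, and in those cases it suffices to show $\psi(L_1)=0$; hence all computations take place in degrees $0$ and $1$, where the generators are explicitly known. Your transitivity induction instead leaves a critical degree $i_0=-1-t$ for \emph{every} $t<0$, so your ``final verification'' would have to be performed in every degree $i_0\geq 0$, on weight spaces of $L_{i_0}$ whose structure you have not described --- a substantially larger task than the one the paper actually solves. (There is also a smaller logical slip: your induction cannot pass the critical degree, since for $i>i_0$ the hypothesis $\psi(L_{i-1})=0$ is unavailable until $\psi(L_{i_0})=0$ has been proved; this ordering issue is repairable, because once $L_{i_0}$ is killed the induction resumes, but as written the claim ``$\psi(L_i)=0$ for every $i\neq i_0$'' does not follow.) The fix is to insert Lemma \ref{l2.7} at the start, collapsing the critical degrees to $i_0\in\{0,1\}$, and then supply the bracket computations recorded in the paper's Cases 1 and 2.
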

\begin{proof}
Let $\varphi$ be a homogeneous derivation of degree $t<0$.
From Lemma \ref{l2.8} we may assume that $\varphi(\frak{g}^{(2)}(\underline{1})_{-1}+T)=0,$
where
$$T=\mathrm{span}_{\mathbb{F}}\{T_{ij}=\mathrm{T_H}(x_ix_{i'}-x_jx_{j'})\mid i, j \in \mathbf{Y_0}, i\neq j\}$$
is a torus of  $\frak{g}^{(2)}(\underline{1})$ (see Remark \ref{l2.5}). 
Since $\frak{g}^{(2)}(\underline{1})$ is generated by
$\frak{g}^{(2)}(\underline{1})_{-1}\oplus \frak{g}^{(2)}(\underline{1})_1$,
we may assume that $t\in \{-1, -2\},$ and only have to show $\varphi(\frak{g}^{(2)}(\underline{1})_1)=0$.
\\

\noindent\textbf{Case 1:} $t=-2.$ We can assert that $\varphi(\mathrm{T_H}(x^{(3\varepsilon_i)}))=0$, for any $i\in\mathbf{Y_0}$.

Assume that $$\varphi(\mathrm{T_H}(x^{(3\varepsilon_i)}))=\sum_{r\in \mathbf{Y}}a_r\partial_r.$$
Applying $\varphi$ to the equation 
$$[\mathrm{T_H}(x^{(3\varepsilon_i)}), \mathrm{T_H}(x^{(\varepsilon_k)}x_{j'})]=0$$
where $i, j, k \in \mathbf{Y_0}$ are  distinct,
we can obtain $a_{j'}=a_k=0$, then
$$\varphi(\mathrm{T_H}(x^{(3\varepsilon_i)}))=a_i\partial_i+a_j\partial_j+a_{i'}\partial_{i'}+a_{k'}\partial_{k'}.$$
Applying $\varphi$ to the equation
$$[T_{ij}, \mathrm{T_H}(x^{(3\varepsilon_i)})]=-3\mathrm{T_H}(x^{(3\varepsilon_i)}),$$
we obtain $a_i=a_j=a_{i'}=a_{k'}=0.$
Hence $\varphi(\mathrm{T_H}x^{(3\varepsilon_i)})=0$.

From a direct and simple computation we can obtain that $\frak{g}^{(2)}(\underline{1})_1$ is generated by 
$$\frak{g}^{(2)}(\underline{1})_0\oplus\sum_{i\in \mathbf{Y_0}}\mathbb{F}\mathrm{T_H}(x^{(3\varepsilon_i)}).$$
Hence $\varphi=0.$

\noindent\textbf{Case 2:} $t=-1$. We can assert that $\varphi(\mathrm{T_H}(x^{(2\varepsilon_i)}))=0$ and  $\varphi(\mathrm{T_H}(x_{i'}x_{j'}))=0$
$i, j\in \mathbf{Y_0}$, $i\neq j.$

Assume that $$\varphi(\mathrm{T_H}(x^{(2\varepsilon_i)}))=\sum_{r\in \mathbf{Y}}b_r\partial_r.$$
Applying $\varphi$ to the equation
$$[\mathrm{T_H}(x^{(2\varepsilon_i)}), T_{jk}]=0,$$
where $i, j, k \in \mathbf{Y_0}$ are  distinct,
we can obtain $b_{j}=b_{j'}=b_{k}=b_{k'}=0$, then
$$\varphi(\mathrm{T_H}(x^{(2\varepsilon_i)}))=b_i\partial_i+b_{i'}\partial_{i'}.$$
Applying $\varphi$ to the equation
$$[T_{ik}, \mathrm{T_H}(x^{(2\varepsilon_i)})]=-2\mathrm{T_H}(x^{(2\varepsilon_i)}),$$
we obtain $b_i=b_{i'}=0.$
Hence $\varphi(\mathrm{T_H}x^{(2\varepsilon_i)})=0$.

Assume that $$\varphi(\mathrm{T_H}(x_{i'}x_{j'}))=\sum_{r\in \mathbf{Y}}c_r\partial_r.$$
Applying $\varphi$ to the equation
$$[\mathrm{T_H}(x_{i'}x_{j'}), T_{ij}]=0,$$
we can obtain $c_{j}=c_{j'}=c_{i}=c_{i'}=0$, hence
$$\varphi(\mathrm{T_H}(x_{i'}x_{j'}))=\sum_{r\in \mathbf{Y}\backslash\{i, i', j, j'\}}c_r\partial_r.$$
For $m\geq 4$, we can put $k, l \in \mathbf{Y_0}$ satisfying $k\neq l$, $k,l \neq i,j$. Applying $\varphi$ to the equation
$$[T_{kl}, \mathrm{T_H}(x_{i'}x_{j'})]=0,$$ 
we obtain $b_i=b_{i'}=0.$
Hence $\varphi(\mathrm{T_H}(x_{i'}x_{j'}))=0$.
Note that 
$\frak{g}^{(2)}(\underline{1})_0$ is generated by
$$T\oplus\sum_{i, j\in \mathbf{Y_0}, i\neq j}\mathbb{F}\mathrm{T_H}(x^{(2\varepsilon_i)})\oplus\sum_{i, j\in \mathbf{Y_0}, i\neq j}\mathbb{F}\mathrm{T_H}(x_{i'}x_{j'}).$$
Hence $\varphi(\frak{g}^{(2)}(\underline{1})_0)=0$.
 Consequently,
$$[\frak{g}^{(2)}(\underline{1})_{-1}, \varphi\big(\frak{g}^{(2)}(\underline{1})_1\big)]=
\varphi\big([\frak{g}^{(2)}(\underline{1}\big)_{-1}, \frak{g}^{(2)}(\underline{1})_1]\big)=0.$$
By means of the transitiveness of the simple algebra, we have
$\varphi\big(\frak{g}^{(2)}(\underline{1})_1\big)=0.$
Hence $\varphi=0$. From Lemma \ref{l2.8} the conclusion holds.
\end{proof}

\section{Derivations}
In this section, we will determine derivations of
$\frak{g}$, $\frak{g}^{(1)}$ and $\frak{g}^{(2)}$.
Firstly, we study  the derivations  of negative $\mathbb{Z}$-degree for
$\frak{g}$, $\frak{g}^{(1)}$ and $\frak{g}^{(2)}$, by virtue of
the same subjects of the restricted Lie superalgebra $\frak{g}^{(1)}(\underline{1})$.
Secondly, we discuss the normalizers  of  $\frak{g}$, $\frak{g}^{(1)}$ and $\frak{g}^{(2)}$
 in $W.$ Finally, we obtain the
derivations of $\frak{g}$, $\frak{g}^{(1)}$ and $\frak{g}^{(2)}$.
\begin{lemma}\label{l3.1}
Let $M$ denote a subalgebra of $\frak{g}(\underline{s})$, $\underline{1}\leq\underline{k}\leq\underline{s}$.
 If
$$\frak{g}^{(2)}(\underline{k})+\mathbb{F}\mathrm{T_H}\big(x^{(p^{k_i}+1)\varepsilon_i}\big)\subset M$$
for some $i$, then $\frak{g}^{(2)}(\underline{k}+\varepsilon_i)\subset M.$
\end{lemma}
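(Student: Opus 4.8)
The plan is to reduce the statement to the single generator criterion of Lemma \ref{l2.6} and then to produce one top-minus-one degree witness inside $M$ by an explicit bracket computation with the new generator. Write $L:=\frak g^{(2)}(\underline k+\varepsilon_i)$, which is a simple, finite-dimensional, $\mathbb Z$-graded Lie superalgebra, $L=\oplus_{j=-1}^{s}L_j$ with top degree $s$. Since $\frak g^{(2)}(\underline k)\subseteq L$ as a graded subalgebra and any leader of $\mathbb Z$-degree $\le 1$ has total degree $\le 3<p^{k_i}$ (recall $k_i\ge 1$ and $p>3$), the components $L_{-1}$ and $L_1$ cannot involve the new divided powers $x^{(a\varepsilon_i)}$ with $a\ge p^{k_i}$ and hence already lie in $\frak g^{(2)}(\underline k)\subseteq M$. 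Thus $M':=M\cap L$ is a subalgebra of $L$ with $L_{-1}\oplus L_1\subseteq M'$, and by Lemma \ref{l2.6} it suffices to exhibit a nonzero element of $M'\cap L_{s-1}$.

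The key computational tool is the new generator. Since $\partial_j(x^{((p^{k_i}+1)\varepsilon_i)})=\delta_{ji}\,x^{(p^{k_i}\varepsilon_i)}$, we have $\mathrm{T_H}(x^{((p^{k_i}+1)\varepsilon_i)})=x^{(p^{k_i}\varepsilon_i)}\partial_{i'}$. Using $[\mathrm{T_H}(a),\mathrm{T_H}(b)]=\mathrm{T_H}(\mathrm{T_H}(a)(b))$ together with the divided-power product rule, for every leader $x^{(\beta)}x^u$ with $i'\in u$ and $\beta_i+p^{k_i}\le p^{k_i+1}-1$,
\[
[\mathrm{T_H}(x^{((p^{k_i}+1)\varepsilon_i)}),\mathrm{T_H}(x^{(\beta)}x^u)]=\pm\binom{\beta_i+p^{k_i}}{p^{k_i}}\,\mathrm{T_H}(x^{(\beta+p^{k_i}\varepsilon_i)}x^{u-\langle i'\rangle}),
\]
and by Lucas' theorem $\binom{\beta_i+p^{k_i}}{p^{k_i}}\equiv\lfloor\beta_i/p^{k_i}\rfloor+1\not\equiv 0\pmod p$ under that constraint. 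So bracketing with the new generator raises the $i$-th divided-power degree by $p^{k_i}$ with a nonzero structure constant, at the cost of deleting the factor $x_{i'}$.

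To build the witness, note that $L_{s-1}$ consists of spanning elements (\ref{eqbl1504}) whose leaders have total degree $|\pi'|+m-4$, where $\pi'=\pi(\underline k+\varepsilon_i)$; any such leader satisfies $\alpha_i\ge\pi'_i-4=p^{k_i+1}-5>2p^{k_i}$, so it lies in a high block of the $k_i$-th base-$p$ digit and cannot be reached by a single application of the identity above. I would therefore start from a suitable near-top element of $\frak g^{(2)}(\underline k)\subseteq M$ carrying the factor $x_{i'}$ and iterate the raising bracket of order $p-1$ times; because each application consumes $x_{i'}$, between consecutive applications I reinsert $x_{i'}$ by bracketing with an auxiliary element of $\frak g^{(2)}(\underline k)$ built from an index $j\ne i$ (available since $m>3$), which redistributes degree among the remaining slots while preserving the already accumulated $\alpha_i$. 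Arranging the auxiliary data so that all intermediate coefficients are nonzero modulo $p$, one lands on a nonzero spanning element with $\alpha_i=p^{k_i+1}-1$ and total degree $|\pi'|+m-4$, i.e.\ in $M'\cap L_{s-1}$. Lemma \ref{l2.6} then yields $M'=L$, that is $\frak g^{(2)}(\underline k+\varepsilon_i)\subseteq M$.

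The only real obstacle is the construction of the third paragraph: the sole operator that increases the $i$-th divided-power degree, namely $x^{(p^{k_i}\varepsilon_i)}\partial_{i'}$, couples the raising with the annihilation of $x_{i'}$, so climbing through all $p$ blocks of the $k_i$-th digit forces an interleaved raise/reinsert bootstrap rather than a plain iteration. Keeping every coefficient nonzero modulo $p$ via Lucas' theorem, and checking that the correction terms $\sum_r\Gamma_r^q$ reassemble at each step into genuine spanning elements of the form (\ref{eqbl1504}), is the delicate bookkeeping; the reduction and the final appeal to Lemma \ref{l2.6} are formal.
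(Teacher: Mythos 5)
Your overall strategy is the paper's: reduce, via Lemma \ref{l2.6}, to producing a nonzero element of $M$ in the next-to-top component of $L:=\frak{g}^{(2)}(\underline{k}+\varepsilon_i)$, then climb to it with the raising operator $\mathrm{T_H}(x^{((p^{k_i}+1)\varepsilon_i)})=x^{(p^{k_i}\varepsilon_i)}\partial_{i'}$, whose structure constants are governed by the same Lucas-type congruence the paper records as $\binom{rp^a-b}{p^a}\equiv r-1\pmod p$. Your reduction paragraph (including the check that $L_{-1}\oplus L_1\subseteq\frak{g}^{(2)}(\underline{k})\subseteq M$) is correct, and in fact more explicit than the paper on that point. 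The genuine gap is your third paragraph, which is precisely where the content of the lemma lives: you never exhibit the starting element, never specify the auxiliary ``reinsertion'' elements, and never verify that the intermediate coefficients survive; you only assert that the bookkeeping can be arranged, and your final paragraph concedes this. As it stands, this is a plan for a proof, not a proof.

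For comparison, the paper makes the climb concrete. Choosing labels so that $i\neq 1,2$, it starts from the explicit element $E_1\in\frak{g}^{(2)}(\underline{k})\subseteq M$ with leader $x^{(\mathcal{E}_k-\varepsilon_1-\varepsilon_2)}x^{\omega-\langle 1'\rangle}$, where $\mathcal{E}_k=\sum_{j}(p^{k_j}-1)\varepsilon_j$, lets $E_r$ be the analogous element with $(r-1)p^{k_i}\varepsilon_i$ added to the exponent, and computes
\[
[\mathrm{T_H}(x^{((p^{k_i}+1)\varepsilon_i)}),E_r]
= r(-1)^{i-2}\,\mathrm{T_H}\Big(x^{(\mathcal{E}_k-\varepsilon_1-\varepsilon_2+rp^{k_i}\varepsilon_i)}
x^{\omega-\langle 1'\rangle-\langle i'\rangle}-\textstyle\sum_{q}\Gamma_q^1(\cdots)\Big),
\]
with coefficient nonzero for $1\le r\le p-1$; at $r=p-1$ this lands exactly in degree $s-1$, giving the witness. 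Note also that your reinsertion scheme, as literally described, is not routine: a low-degree element of $\frak{g}^{(2)}(\underline{k})$ that multiplies by $x_{i'}$, such as $\mathrm{T_H}(x^{(2\varepsilon_j)}x_{i'})=x_jx_{i'}\partial_{j'}\pm x^{(2\varepsilon_j)}\partial_i$, either consumes one of the only $m-2$ spare odd variables $x_{j'}$ or produces terms lowering the accumulated $\alpha_i$; since up to $p-2$ reinsertions are needed and $p$ may far exceed $m$, ``redistributing degree among the remaining slots while preserving $\alpha_i$'' needs a further idea (e.g.\ exploiting that terms missing $x_{i'}$ are annihilated by the next raising bracket, or using corrected elements like $\mathrm{T_H}(x_jx_{j'}x_{i'}-x_qx_{q'}x_{i'})$, whose leading term is net multiplication by $x_{i'}$). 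To be fair, the paper itself is laconic at this very point --- the step from the displayed bracket, whose output lacks $x_{i'}$, back to $E_{r+1}$, which contains it, is hidden in ``assume inductively and without loss of generality'' --- but the paper does pin down the explicit chain and the nonvanishing constants, which is exactly what your proposal is missing.
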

\begin{proof}
Observing that under consideration $\frak{g}^{(2)}(\underline{k}+\varepsilon_i)$
is a simple $\mathbb{Z}$-graded subalgebra of $\frak{g}(\underline{s}),$ that is
$$g^{(2)}(\underline{k}+\varepsilon_i)=\bigoplus_{r=-1}^n(g^{(2)}(\underline{k}+\varepsilon_i))_r,$$
where $n=\Sigma_{j=1}^mp^{k_j}-4.$ From Lemma \ref{l2.3}
we only have to prove that
$$M\cap \frak{g}^{(2)}(\underline{k}+\varepsilon_i)_{n-1}\neq(0).$$

In order to accomplish this,  we observe that:

for $1\leq a$, $1\leq b\leq p-1$, $1\leq r\leq p$
\[
\begin{pmatrix}
rp^a-b\\
p^a
\end{pmatrix}
\equiv
\begin{pmatrix}
r-1\\
1
\end{pmatrix}
=r-1\pmod{p}
\]
holds.
Assume inductively and without loss of the generality that $M$ contains
\begin{eqnarray*}
E_r&:=&\mathrm{T_H}\bigg(x^{(\mathcal{E}_k-\varepsilon_1-\varepsilon_2+(r-1)p^{k_i}\varepsilon_i)}
x^{\omega-\langle 1'\rangle}\\
&&\quad\quad-\sum_{q\in \mathcal{B}_1}\Gamma_q^1x^{(\mathcal{E}-\varepsilon_1-\varepsilon_2+(r-1)p^{k_i}\varepsilon_i)}
x^{\omega-\langle 1'\rangle}\bigg),
\end{eqnarray*}
where  $1\leq r\leq p-1$, $i\neq 1, 2$ and
\begin{eqnarray*}
\mathcal{E}_k&=&(p^{k_1}-1)\varepsilon_1+(p^{k_2}-1)\varepsilon_2+\cdots+(p^{k_m}-1)\varepsilon_m;\\
\mathcal{B}_1&=&\mathbf{I}(\mathcal{E}_k-\varepsilon_1-\varepsilon_2+(r-1)p^{k_i}\varepsilon_i,
\omega-\langle 1'\rangle).
\end{eqnarray*}
Then, we obtain
\begin{eqnarray*}
&&[\mathrm{T_H}(x^{(p^{k_i}+1)\varepsilon_i}), E_r]\\
&=&r(-1)^{i-2}\mathrm{T_H}\bigg(x^{(\mathcal{E}_k-\varepsilon_1-\varepsilon_2+rp^{k_i}\varepsilon_i)}
x^{\omega-\langle 1'\rangle-\langle i'\rangle}\\
&&-\sum_{q\in \mathcal{B}_2}\Gamma_q^1x^{(\mathcal{E}_k-\varepsilon_1-\varepsilon_2+rp^{k_i}\varepsilon_i)}
x^{\omega-\langle 1'\rangle-\langle i'\rangle}\bigg),
\end{eqnarray*}
where $\mathcal{B}_2=\mathbf{I}(\mathcal{E}_k-\varepsilon_1-\varepsilon_2+rp^{k_i}\varepsilon_i,
\omega-\langle 1'\rangle-\langle i'\rangle).$

By induction we obtain
\begin{eqnarray*}
&&\mathrm{T_H}\bigg(x^{(\mathcal{E}_k-\varepsilon_1-\varepsilon_2+(p-1)p^{k_i}\varepsilon_i)}
x^{\omega-\langle 1'\rangle-\langle i'\rangle}\\
&&-\sum_{q\in \mathcal{B}_2}\Gamma_q^1x^{(\mathcal{E}_k-\varepsilon_1-\varepsilon_2+(p-1)p^{k_i}\varepsilon_i)}
x^{\omega-\langle 1'\rangle-\langle i'\rangle}\bigg)\\
&&\in M\cap \frak{g}^{(2)}(\underline{k}+\varepsilon_i)_{n-1}.
\end{eqnarray*}

Hence the assertion holds.
\end{proof}

\begin{theorem}\label{t3.2}
Let $X$ be a $\mathbb{Z}$-graded subalgebra of $\frak{g}(m, m; \underline{t})$ containing  $\frak{g}^{(\infty)}$
and $\underline{s}$ be any element of $\mathbb{N}^m$ with $\underline{t}\leq \underline{s}$.
Then
$$\mathrm{Der}^-(X, \frak{g}(\underline{s}))=\mathrm{span}_{\mathbb{F}}\bigg\{
\{(\mathrm{ad}_X(\partial_i))^{p^{k_i}}\mid i\in\mathbf{Y_0}, \, 1\leq k_i< t_i\}\cup
\{\mathrm{ad}_X(\partial_i)\mid i\in\mathbf{Y}\}\bigg\}.$$
\end{theorem}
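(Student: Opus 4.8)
For the inclusion $\supseteq$ I would simply verify that the listed maps are negative-degree derivations. Each $\mathrm{ad}_X(\partial_i)$, $i\in\mathbf{Y}$, is inner of $\mathbb{Z}$-degree $-1$, since $\partial_i=\pm\mathrm{T_H}(x_{i'})\in\frak{g}(\underline{s})_{-1}$. Because the $p$-th power of a derivation is again a derivation in characteristic $p$, every iterate $(\mathrm{ad}_X(\partial_i))^{p^{k_i}}$ is a derivation of $X$ into $\frak{g}(\underline{s})$ of degree $-p^{k_i}$; it carries $X\subseteq\frak{g}(\underline{t})\subseteq\frak{g}(\underline{s})$ into $\frak{g}(\underline{s})$, and since $\partial_i^{p^{t_i}}=0$ on $\mathcal{O}(m,m;\underline{t})$ while $\partial_i^{p^{k_i}}\neq0$ for $k_i<t_i$, it is nonzero precisely in the stated range $1\leq k_i<t_i$. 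The degrees $-1$ and $-p^{k_i}$ are pairwise distinct and the $2m$ maps $\mathrm{ad}_X(\partial_i)$ are independent in degree $-1$, so the listed family is linearly independent, giving $\supseteq$.

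For the reverse inclusion, fix a homogeneous $\varphi\in\mathrm{Der}^-(X,\frak{g}(\underline{s}))$ of degree $t<0$. Since $\frak{g}(\underline{s})_r=0$ for $r<-1$, one has $\varphi(X_{-1})=0$, in particular $\varphi(\partial_j)=0$ for all $j\in\mathbf{Y}$. I would then normalize $\varphi$ on the torus $T=\mathrm{span}_{\mathbb{F}}\{T_{ij}\}$ of Remark \ref{l2.5}: by the module-valued version of Lemma \ref{l2.8} there is $e\in\frak{g}(\underline{s})_t$ with $(\varphi-\mathrm{ad}_X e)|_T=0$. For $t<-1$ we have $\frak{g}(\underline{s})_t=0$, so $e=0$ and $\varphi|_T=0$ automatically; for $t=-1$ we have $e\in\frak{g}(\underline{s})_{-1}=\mathrm{span}_{\mathbb{F}}\{\partial_j\}$, so $\mathrm{ad}_X e$ is a combination of the listed degree $-1$ derivations and may be subtracted. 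Hence I may assume $\varphi|_T=0$, whence $\varphi([h,x])=[h,\varphi(x)]$ for $h\in T$ forces $\varphi$ to preserve $T$-weight spaces up to the degree shift $t$.

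The structural heart is a transitivity reduction. One checks $\frak{g}(\underline{s})_{-1}=W(m,m;\underline{s})_{-1}=\mathrm{span}_{\mathbb{F}}\{\partial_j\}$, so transitivity of the generalized Witt superalgebra passes to $\frak{g}(\underline{s})$: a homogeneous element of nonnegative degree is determined by its brackets against the $\partial_j$. Combining this with $\varphi(\partial_j)=0$ and the identity $[\varphi(x),\partial_j]=\varphi([x,\partial_j])$, a downward induction on $\mathbb{Z}$-degree shows that $\varphi$ is completely determined by its restriction $\sigma:=\varphi|_{X_{|t|-1}}\colon X_{|t|-1}\to\mathrm{span}_{\mathbb{F}}\{\partial_j\}$, the lowest degree on which $\varphi$ can be nonzero. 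Since $\varphi$ respects $T$-weights and the $2m$ elements $\partial_j$ have pairwise distinct $T$-weights $\pm\epsilon_j$ (read off from (\ref{05153})), the seed $\sigma$ must send each weight component of $X_{|t|-1}$ into the unique $\partial_j$ of matching weight. This weight bookkeeping, anchored in degree $-1$ by Theorem \ref{t2.9} and propagated through the generators of $\frak{g}^{(2)}(\underline{t})$ via Lemma \ref{l2.7} and Lemma \ref{l3.1} (which climbs $\underline{t}$ one coordinate at a time), identifies $\sigma$ with a combination of the seeds of the listed derivations of degree $t$, and forces $\sigma=0$ for every $t\notin\{-1\}\cup\{-p^{k}\mid 1\leq k<t_i\}$; subtracting the identified combination leaves a derivation with zero seed, which by the determination above is zero.

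The hard part will be the final identification, namely showing that among all weight-compatible seed maps $\sigma$ only those arising from the listed derivations actually extend to derivations of $X$. Concretely, one must pin down $\varphi$ on the exceptional generators $\mathrm{T_H}(x^{(p^{k_i}+1)\varepsilon_i})$ and verify it agrees up to scalar with $(\mathrm{ad}_X(\partial_i))^{p^{k_i}}$, excluding spurious weight-compatible candidates through the derivation/consistency relations among the generators supplied by \cite{lh}. One must also confirm that enlarging the target from $\frak{g}(\underline{t})$ to $\frak{g}(\underline{s})$ produces no new negative derivations; this is precisely why the answer is controlled by $\underline{t}$ and not $\underline{s}$, since the domain $X\subseteq\frak{g}(\underline{t})$ and the fixed target slot $\frak{g}(\underline{s})_{-1}=\mathrm{span}_{\mathbb{F}}\{\partial_j\}$ leave no room for seeds sensitive to $\underline{s}$.
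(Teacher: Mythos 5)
Your setup is sound and parallels the paper: the inclusion $\supseteq$ is routine, the reduction to derivations vanishing on the torus $T$ (whether done via a module-valued Lemma \ref{l2.8} or, as the paper does, via the $T$-weight space decomposition of $\mathrm{Der}^-(X,\frak{g}(\underline{s}))$, where nonzero-weight derivations are immediately inner and land in $\mathrm{span}_{\mathbb{F}}\{\mathrm{ad}_X(\partial_i)\}$) is correct, and the observation that $\varphi$ is determined by its ``seed'' on $X_{-t-1}$ via transitivity is a valid uniqueness statement. But the proof stops exactly where it has to start: you yourself label the identification of which seeds extend to derivations as ``the hard part'' and then describe it only as ``weight bookkeeping'' plus ``consistency relations among the generators.'' That is not a mechanism, and two essential steps are missing. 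First, you invoke Theorem \ref{t2.9} as an ``anchor,'' but Theorem \ref{t2.9} concerns self-derivations of $\frak{g}^{(2)}(\underline{1})$; to apply it to $\varphi\colon X\to\frak{g}(\underline{s})$ one must show that $\varphi$ restricted to $\frak{g}^{(2)}(\underline{1})$ takes values back in $\frak{g}^{(2)}(\underline{1})$. The paper gets this from $\varphi(X_{-1})=0$: then $\varphi$ commutes with every $\mathrm{ad}\partial_i$, hence with $(\mathrm{ad}\partial_i)^{p}$, hence preserves the kernels of these operators, which is what confines $\varphi(X\cap\frak{g}(\underline{1}))$ inside $\frak{g}(\underline{1})$. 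Your proposal never supplies this, so Theorem \ref{t2.9} cannot yet be used at all.

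Second, the actual classification is not done by inspecting weight-compatible maps on $X_{-t-1}$ (many such maps exist that are not derivations); it is done by a kernel-climbing induction that your sketch gestures at but does not run. After subtracting $\sum_i\alpha_i\mathrm{ad}\partial_i$ so that $\frak{g}^{(2)}(\underline{1})\subseteq\ker\varphi$, take $\underline{k}$ maximal with $\frak{g}^{(2)}(\underline{k})\subseteq\ker\varphi$. Using that $\frak{g}^{(2)}(\underline{k})$ is an ideal of $\overline{HO}(\underline{k})\cap\overline{S}(\underline{k})$ (Proposition \ref{p1}) together with the vanishing of the centralizer of $\frak{g}^{(2)}(\underline{1})$ in $\frak{g}(\underline{s})$, one gets $\varphi\big(X\cap\overline{HO}(m,m;\underline{k})\cap\overline{S}(m,m;\underline{k})\big)=0$; if $\underline{k}=\underline{t}$ this already gives $\varphi(X)=0$. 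Otherwise, for an index $i$ with $k_i<t_i$ the whole obstruction is concentrated in the single element $E=\mathrm{T_H}\big(x^{((p^{k_i}+1)\varepsilon_i)}\big)$: from $[\varphi(E),\frak{g}(\underline{k})_{-1}]=0$ one gets $\varphi(E)\in\sum_j\mathbb{F}\partial_j$, the $T$-eigenvalue forces $\varphi(E)=\alpha\partial_{i'}$, and subtracting $\alpha(\mathrm{ad}\partial_i)^{p^{k_i}}$ produces a derivation killing $\frak{g}^{(2)}(\underline{k})+\mathbb{F}E$, which by Lemma \ref{l3.1} kills $\frak{g}^{(2)}(\underline{k}+\varepsilon_i)$ --- this is where Lemma \ref{l3.1} enters (applied to the kernel, both to contradict maximality and to advance the induction), not as generic ``propagation through generators.'' This induction is also what proves your unproved assertion that $\sigma=0$ for degrees $t\notin\{-1\}\cup\{-p^{k_i}\}$, and what shows the answer is insensitive to $\underline{s}$; without it the argument does not close.
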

\begin{proof}
Let $T$ be the torus of $X_0$ mentioned in Remark \ref{l2.5}. Then
$$\mathrm{Der}^-(X, \frak{g}(\underline{s}))=\sum_{\mu\in T^*}\mathrm{Der}^-(X, \frak{g}(\underline{s}))_{\mu}$$
decomposes into the direct sum of $T$-weight spaces.
Take $d \in \mathrm{Der}^-(X, \frak{g}(\underline{s}))_{\mu}$ for some $\mu\neq 0,$
and $t \in T$ with $\mu(t)\neq 0.$
For arbitrary $u \in X$, we obtain
$$\mu(t)d(u)=(t\cdot d)(u)=[t, d(u)]-d[t, u]=-[d(t), u].$$
Hence $d=\mathrm{ad}_X(-\mu(t)^{-1}d(t))\in \mathrm{ad}_X\frak{g}(\underline{s}).$
According to $t \in T\subset X_0\cap X_{\bar{0}}$, we have
$d\in \mathrm{span}_{\mathbb{F}}\{\mathrm{ad}_X(\partial_i)\mid i\in\mathbf{Y}\}$,
thus we only have to determine homogeneous derivations $d$ from
$X$ to $\frak{g}(\underline{s})$ of degree $t<0$ vanishing on given torus $T$ of
$X_0$.
For $\frak{g}$ we have $d(X_{-1})\subset \frak{g}(\underline{s})_{-1+t}=0,$
hence $d(X\cap\ker(\mathrm{ad}\partial_i^p))\subset X\cap\ker(\mathrm{ad}\partial_i^p)$
and therefore $d$ maps $X\cap \frak{g}(\underline{1})$ into $X\cap \frak{g}(\underline{1})$.
Thus $d$ defines by restriction a derivation of $\frak{g}^{(2)}(\underline{1})$.
Applying Theorem \ref{t2.9} we obtain that $d-\sum_{i\in Y}\alpha_i\mathrm{ad}\partial_i$
vanishes on $\frak{g}^{(2)}(\underline{1})$ for a suitable choice of $\alpha_i \in \mathbb{F}$.
Thus we may assume that $\frak{g}^{(2)}(\underline{1})\subset \ker d.$

Take $\underline{1}\leq\underline{k}\leq\underline{t}$ to be maximal subject to the condition
$\frak{g}^{(2)}(\underline{k})\subset \ker d.$ Then
$$\big[\frak{g}^{(2)}(\underline{1}),
d\big(X\cap \overline{HO}(m,m;\underline{k})\cap \overline{S}(m,m;\underline{k})\big)\big]
\subset d(\frak{g}^{(2)}(\underline{k}))=0,$$
whence
$d(X\cap \overline{HO}(m,m;\underline{k})\cap \overline{S}(m,m;\underline{k}))
\subset\big\{D \in \frak{g}(\underline{s})\big|
 [\frak{g}^{(2)}(\underline{1}), D]=(0)\big\}=0$.
 This is the claim if $\underline{k}=\underline{t}$.
 Suppose  $\underline{k}<\underline{t}$, and let $i$ be an index for which
 $k_i<t_i$.
Take $E\in\frak{g}(\underline{s})$ as $E:=\mathrm{T_H}(x^{(p^{k_i}+1)\varepsilon_i})$.
Lemma \ref{l3.1} proves that $E \not\in\ker d.$
However, a computation shows that
$[E, \frak{g}(\underline{k})_{-1}]\subset X\cap \overline{HO}(m,m;\underline{k})\cap \overline{S}(m,m;\underline{k})\subset\ker d$,
whence
$$[d(E), \frak{g}(\underline{k})_{-1}]=0.$$
This means $d(E) \in \sum_{j \in \mathbf{Y}}\mathbb{F}\partial_j.$
We may assume that $d$ vanishes on the torus $T$.
Considering eigenvalues we obtain that there exists $\alpha \in \mathbb{F}$
such that
$$d(E)=\alpha\partial_{i'}, \, i\in\mathbf{Y_0}.$$
Thus $d'=d-\alpha\mathrm{ad}\partial_i^{p^{k_i}}$ annihilates
$\frak{g}^{(2)}(\underline{k})+\mathbb{F}E.$
Lemma \ref {l3.1} proves that $d'$ annihilates $\frak{g}^{(2)}(\underline{k}+\varepsilon_i).$
We now proceed by induction.

Thus we may assume that $\frak{g}^{(2)}(\underline{t}) \subset \ker d.$
As above we then conclude $d(X)=0.$
\end{proof}

\begin{remark}\label{r1}
We use the method for modular  Lie algebras \cite[Lemmas 5.2.6 and 6.1.3]{st} to prove
Lemma \ref{l3.1} and Theorem \ref{t3.2}.
\end{remark}

By virtue of  Theorem \ref{t3.2}, we can
determine the negative part of  the derivation algebra of
 $\frak{g}$, $\frak{g}^{(1)}$ and $\frak{g}^{(2)}$ as follows:
\begin{proposition}\label{t3.3} We have
\begin{eqnarray*}
\quad\mathrm{Der}^-(X)=
\mathrm{span}_{\mathbb{F}}\big(
\{(\mathrm{ad}_X(\partial_i))^{p^{k_i}}| i\in\mathbf{Y_0},\, 1\leq k_i< t_i\}\cup
\{\mathrm{ad}_X(\partial_i)| i\in\mathbf{Y}\}\big),
\end{eqnarray*}
where $X=\frak{g}$, $\frak{g}^{(1)}$, or $\frak{g}^{(2)}.$
\end{proposition}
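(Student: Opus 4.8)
The plan is to obtain the statement as a corollary of Theorem \ref{t3.2} by specialising $\underline{s}=\underline{t}$ and then observing that, for these particular $X$, enlarging the target from $X$ to $\frak{g}=\frak{g}(\underline{t})$ does not enlarge the space of negative derivations. First I would record that all three algebras $X=\frak{g},\frak{g}^{(1)},\frak{g}^{(2)}$ meet the hypotheses of Theorem \ref{t3.2}: since $\frak{g}^{(2)}$ is simple it is perfect, so $\frak{g}^{(\infty)}=\frak{g}^{(2)}$, and each of the three is a $\mathbb{Z}$-graded subalgebra of $\frak{g}(\underline{t})$ containing $\frak{g}^{(\infty)}$. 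Putting $\underline{s}=\underline{t}$ makes $\frak{g}(\underline{s})=\frak{g}$, so Theorem \ref{t3.2} yields that $\mathrm{Der}^-(X,\frak{g})$ equals exactly the span displayed in the proposition, the only discrepancy being that there the target is $\frak{g}$ rather than $X$.

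For the inclusion from left to right, I would note that any $d\in\mathrm{Der}^-(X)=\mathrm{Der}^-(X,X)$ becomes, after composition with the inclusion $X\hookrightarrow\frak{g}$, a negative-degree derivation of $X$ into $\frak{g}$; the Leibniz rule and the degree condition are inherited because $X$ is a graded subalgebra of $\frak{g}$. Hence $\mathrm{Der}^-(X)\subseteq\mathrm{Der}^-(X,\frak{g})$, and Theorem \ref{t3.2} places $d$ in the asserted span.

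The reverse inclusion rests on the single observation that $\partial_i\in X$ for every $i\in\mathbf{Y}$. This is because $\partial_i=\pm\mathrm{T_H}(x_{i'})$ sits in the degree $-1$ component, which by the quoted structure of $\frak{g}^{(1)}$ and $\frak{g}^{(2)}$ (their degree-one leaders lie in $\mathfrak{A}_{2}$, and $(\frak{g}^{(2)})_{-1}=(\frak{g}^{(1)})_{-1}$) equals $\mathrm{span}_{\mathbb{F}}\{\partial_i\mid i\in\mathbf{Y}\}$ and is contained in $\frak{g}^{(2)}\subseteq X$. Consequently $\mathrm{ad}_X(\partial_i)$ is an inner derivation of $X$ carrying $X$ into $X$, and since $X$ is bracket-closed, $(\mathrm{ad}\partial_i)^n(X)\subseteq X$ for all $n$, in particular $(\mathrm{ad}_X(\partial_i))^{p^{k_i}}(X)\subseteq X$. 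Each operator in the spanning set is already certified by Theorem \ref{t3.2} as a negative-degree derivation of $X$ into $\frak{g}$, and the previous remark shows its image actually lies in $X$; hence it belongs to $\mathrm{Der}^-(X,X)=\mathrm{Der}^-(X)$. (That the $p^{k_i}$-th power of a derivation is again a derivation is the standard characteristic-$p$ fact, but here it is already supplied by Theorem \ref{t3.2}.)

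Combining the two inclusions gives the claimed equality for all three choices of $X$; no dimension or linear-independence count is needed, since both sides are described by the identical spanning set. I expect essentially no obstacle beyond this bookkeeping: the entire content of the proposition is the target-swap from $\frak{g}$ to $X$, which is immediate once one notes $\partial_i\in X$.
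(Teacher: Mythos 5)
Your proposal is correct and follows exactly the paper's intended route: the paper states this proposition as an immediate corollary of Theorem \ref{t3.2} (specialising $\underline{s}=\underline{t}$), and your two inclusions --- $\mathrm{Der}^-(X)\subseteq\mathrm{Der}^-(X,\frak{g})$ via the inclusion $X\hookrightarrow\frak{g}$, and the reverse containment because $\partial_i\in(\frak{g}^{(2)})_{-1}\subseteq X$ so the spanning operators and their $p^{k_i}$-th powers preserve $X$ --- supply precisely the bookkeeping the paper leaves implicit. No gap.
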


Now we only have to investigate the nonnegative part of the derivation algebras.
To do that, let us first consider the normalizers of $\frak{g}$, $\frak{g}^{(1)}$, and $\frak{g}^{(2)}$.

\begin{lemma}\label{l3.5}
$\mathrm{Nor}_W(X)
\cap W_t\subseteq (\overline{HO}_t\cap\overline{S}_t)$ $t\in \mathbb{N}$,
where $X=\frak{g}$, $\frak{g}^{(1)}$, or $\frak{g}^{(2)}.$
\end{lemma}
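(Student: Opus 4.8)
The plan is to verify the two defining conditions of $\overline{HO}_t\cap\overline{S}_t$ separately, using two facts that hold uniformly for all three choices $X=\frak{g},\frak{g}^{(1)},\frak{g}^{(2)}$: first $X\subseteq S'\cap HO$, and second $X$ contains every $\partial_r$ with $r\in\mathbf{Y}$. The second holds because $\partial_r=\pm\mathrm{T_H}(x_{r'})$: the leader $x_{r'}$ has $\mathbf{I}=\emptyset$ and $\widetilde{\mathbf{I}}\neq\emptyset$, so $\partial_r\in\mathfrak{A}_2$, and since $\mathfrak{A}_2\subseteq\frak{g}^{(1)}$ with $\frak{g}^{(1)}_{-1}=\frak{g}^{(2)}_{-1}$ we get $\partial_r\in\frak{g}^{(2)}_{-1}\subseteq X$. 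As $X$, $\overline{HO}$ and $\overline{S}$ are all $\mathbb{Z}_2$-graded and $\mathrm{Nor}_W(X)$ inherits this grading, I may assume the normalizing element $D=\sum_{j\in\mathbf{Y}}a_j\partial_j\in W_t$ (so $a_j\in\mathcal{O}(m,m;\underline t)_{t+1}$, $t\geq1$) is $\mathbb{Z}_2$-homogeneous of parity $\mathrm{p}(D)$.

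For the inclusion into $\overline{S}_t$ I would use the divergence. Since $\partial_r\in X$ and $D$ normalizes $X$, we have $[D,\partial_r]\in X\subseteq S'$, so $\mathrm{div}[D,\partial_r]=0$. Because $\mathrm{div}(\partial_r)=0$, the superderivation property (\ref{bl1305}) applied to the pair $(D,\partial_r)$ collapses to $0=\mathrm{div}[D,\partial_r]=-(-1)^{\mathrm{p}(D)\mu(r)}\partial_r(\mathrm{div}D)$, i.e. $\partial_r(\mathrm{div}D)=0$. As this holds for every $r\in\mathbf{Y}$ and the only element of $\mathcal{O}(m,m;\underline t)$ annihilated by all the $\partial_r$ is a scalar, we obtain $\mathrm{div}(D)\in\mathbb{F}$, that is $D\in\overline{S}_t$ (in fact $\mathrm{div}(D)$ is $\mathbb{Z}$-homogeneous of degree $t\geq1$, so $\mathrm{div}(D)=0$).

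For the inclusion into $\overline{HO}_t$ I would again bracket with the $\partial_r$. Here $[D,\partial_r]\in X\subseteq HO\subseteq\overline{HO}$, and a short computation gives $[D,\partial_r]=-(-1)^{\mathrm{p}(D)\mu(r)}\sum_{j}\partial_r(a_j)\partial_j$, of parity $\mathrm{p}(D)+\mu(r)$. Writing out the defining condition of $\overline{HO}$ for this element, the common scalar cancels and one is left with $\partial_i\partial_r(a_{j'})=(-1)^{\mu(i)\mu(j)+(\mu(i)+\mu(j))(\mathrm{p}(D)+\mu(r)+\bar1)}\partial_j\partial_r(a_{i'})$. Commuting $\partial_r$ to the front via $\partial_i\partial_r=(-1)^{\mu(i)\mu(r)}\partial_r\partial_i$, all $\mu(r)$-dependent signs telescope away, leaving
\[
\partial_r\Big(\partial_i(a_{j'})-(-1)^{\mu(i)\mu(j)+(\mu(i)+\mu(j))(\mathrm{p}(D)+\bar1)}\partial_j(a_{i'})\Big)=0 .
\]
Denoting by $c_{ij}$ the quantity in the large parentheses — precisely the obstruction to $D\in\overline{HO}_t$ — this reads $\partial_r(c_{ij})=0$ for all $r\in\mathbf{Y}$, whence $c_{ij}\in\mathbb{F}$. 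But $c_{ij}$ is $\mathbb{Z}$-homogeneous of degree $t\geq1>0$, so $c_{ij}=0$; thus $D$ satisfies the defining relations of $\overline{HO}_t$, and together with the previous step $D\in\overline{HO}_t\cap\overline{S}_t$.

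I expect the only delicate point to be the sign bookkeeping in the last paragraph: one must confirm that the parity shift from $\mathrm{p}(D)$ to $\mathrm{p}(D)+\mu(r)$ in the $\overline{HO}$-condition for $[D,\partial_r]$, together with the two supercommutation signs $(-1)^{\mu(i)\mu(r)}$ and $(-1)^{\mu(j)\mu(r)}$ produced when pulling $\partial_r$ past $\partial_i$ and $\partial_j$, cancels exactly so that the residual identity is the $\overline{HO}$-condition for $D$ itself differentiated by $\partial_r$. Once this cancellation is checked, the homogeneity/degree argument that forces $c_{ij}=0$ is immediate, and the same scheme applies verbatim to $X=\frak{g}$, $\frak{g}^{(1)}$, $\frak{g}^{(2)}$.
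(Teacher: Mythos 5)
Your proposal is correct and follows essentially the same route as the paper: both memberships are obtained by bracketing the normalizing element with the $\partial_r\in X_{-1}$, using $X\subseteq HO\cap S'$ together with the normalizer property, and killing the resulting constant obstruction by positivity of the degree $t$; in particular your divergence argument for $\overline{S}_t$ via (\ref{bl1305}) is exactly the paper's. The only cosmetic difference is in the $\overline{HO}_t$ step: the paper writes $[\partial_r,E]=\mathrm{T_H}(f_r)$ and invokes $\ker(\mathrm{T_H})=\mathbb{F}\cdot 1$ together with $[\partial_i,\partial_j]=0$, whereas you apply the coefficient-symmetry definition of $\overline{HO}$ to $[D,\partial_r]$ (legitimate since $HO\subseteq\overline{HO}$) and use that the common kernel of all $\partial_r$ is $\mathbb{F}\cdot 1$ --- two phrasings of the same computation (indeed $\ker(\mathrm{T_H})$ is precisely that common kernel), and your sign cancellation $(\mu(i)+\mu(j))\mu(r)+\mu(i)\mu(r)+\mu(j)\mu(r)\equiv 0 \pmod 2$ checks out.
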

\begin{proof}
Let $X:=\frak{g}$, $\frak{g}^{(1)}$, or $\frak{g}^{(2)}.$
Suppose
\[
E=\sum\limits_{j=1}^{2m}g_j\partial_j\in {\rm{Nor}}_W(X) \cap W_t,
\]
where $g_j\in\mathcal{O}( m,m;\underline{t}) _{t+1},$
$j\in \mathbf{Y}.$ Then there exists $f_i\in \mathcal{O}( m,m;\underline{t})_{t+1} $ satisfying
$\Delta (f_i)=0,$ such that
\[
[ \partial_i,E] =\mathrm{T_H}( f_i), \, i\in \mathbf{Y}.
\]
Note that
$$[ \partial_i,E] =\sum\limits_{j=1}^{2m}\partial_i
(g_j) \partial_j ,\quad \quad \mathrm{T_H}( f_i)
=\sum\limits_{j=1}^{2m}( -1) ^{\mu(j)\mathrm{p}(f_i)}\partial_j( f_i) \partial_{j'}. $$
We have
\begin{equation}\label{bl1310}
\partial_i( g_{j'}) =(-1) ^{\mu(j)\mathrm{p}(f_i)}\partial_j( f_i) ,\quad i\in \mathbf{Y}.\
\end{equation}
Observe that
\begin{eqnarray*}
\mathrm{T_H}(\partial_i(f_j)) &=&(-1)^{\mu(i)\mathrm{p}(f_j)}[\mathrm{T_H}(f_j) ,
\mathrm{T_H}( x_{i'})] \\
&=&(-1)^{\mu(i)\mathrm{p}(f_j)}\big[
\big[ \partial_j,E\big] ,(-1) ^{\mu(i')\mathrm{p}(x_{i'})}\partial_i\big] \\
&=&[\partial_i,[ \partial_j,E] ],
\end{eqnarray*}
where $i,$ $j \in \mathbf{Y}$.
Since $[ \partial_i, \partial_j] =0,$ we obtain the following equation:
\begin{equation}\label{bl1315}
\mathrm{T_H}(\partial_i( f_j)) =(-1) ^{\mu (i) \mu (j) }
\mathrm{T_H}(\partial_j( f_i)) ,\quad i,j\in \mathbf{Y}.
\end{equation}
Equations (\ref{bl1310}) and (\ref{bl1315}) yield
\begin{eqnarray*}
&&(-1)^{\mu(i)\mathrm{p}(f_j)}\partial_j (g_{i'}) -(-1)^{\mu(i)\mu(j)+\mu(j)\mathrm{p}(f_i)}\partial_i(g_{j'})\\
&=&\partial_i( f_j) - (-1) ^{\mu (i)
\mu ( j) }\partial_j( f_i) \in \ker(\mathrm{T_H}) =\mathbb{F}\cdot 1.
\end{eqnarray*}
Noting that $g_k\in \mathcal{O}( m,m;\underline{t}) _{t+1},\ k\in \mathbf{Y},$ we obtain that
\[
(-1)^{\mu(i)\mathrm{p}(f_j)}\partial_j
(g_{i'}) -(-1)^{\mu(i)\mu(j)+\mu(j)\mathrm{p}(f_i)}\partial_i
(g_{j'}) \in \mathbb{F}\cdot 1\cap \mathcal{O}( m,m;\underline{t})_{t}.
\]
The assumption that $t>0$ yields
\[
(-1) ^{\mu ( i)\mathrm{p}(f_j)}\partial_j
(g_{i'}) =( -1) ^{\mu ( i) \mu
( j) +\mu ( j) \mathrm{p}(f_i)}\partial_i
(g_{j'}) .
\]
Since $\mathrm{p}(f_i)=\mu( i) +\mathrm{p}(E)+\overline{1},$ it follows that
\[
\partial_i( g_{j'}) =( -1) ^{\mu
(i) \mu ( j) +( \mu( i) +\mu
( j))( \mathrm{p}(E)+\overline{1})}\partial_j( g_{i'}) .
\]
Hence $E\in \overline{HO}$.
Since
\[
[\partial_i,E]= \mathrm{T_H}( f_i) \in X, \ i\in \mathbf{Y},
\]
we obtain $\mathrm{div}[\partial_i,E]=0.$ By virtue of
(\ref{bl1305}) we have $E\in \overline{S}$.

Hence
$\mathrm{Nor}_W(X)
\cap W_t\subseteq (\overline{HO}_t\cap\overline{S}_t),$ $t\in \mathbb{N}$.
\end{proof}

\begin{lemma}\label{l3.6}
Let $H=\mathrm{span}_{\mathbb{F}}\{\mathrm{T_H}(x_ix_{i'})\mid i \in \mathbf{Y_0}\}$,
$h_1=\sum\nolimits_{i=1}^{m}x_{i'}\partial_{i'}.$ Then ${\mathrm{Nor}}_W(X) \cap W_0\subseteq \frak{g}_0
+H+\mathbb{F}\cdot h_1,$  where $X=\frak{g}$, $\frak{g}^{(1)}$, or $\frak{g}^{(2)}.$
\end{lemma}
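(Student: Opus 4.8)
The plan is to determine $N:=\mathrm{Nor}_W(X)\cap W_0$ by exploiting that $W_0$ is the superalgebra of linear vector fields: every $E\in W_0$ has the form $E=\sum_{k,l\in\mathbf{Y}}a_{kl}x_k\partial_l$ with $a_{kl}\in\mathbb{F}$, so $W_0$ is a copy of $\mathfrak{gl}(m|m)$. The first thing I would observe is that the device of Lemma \ref{l3.5} collapses at degree $0$: since $X_{-1}=W_{-1}$, the condition $[\partial_i,E]\in X$ is automatic, and tracing the computation of Lemma \ref{l3.5} with $t=0$ shows that the quantity there forced into $\mathbb{F}\cdot 1\cap\mathcal{O}_0=\mathbb{F}$ is now merely a scalar, so the $\overline{HO}$-relation is only recovered modulo constants and yields no restriction. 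The binding condition at degree $0$ is therefore $[E,X_1]\subseteq X_1$ (together with the automatic $[E,X_{-1}]\subseteq X_{-1}$), and the whole argument becomes a computation on the degree-one generators (\ref{eqbl1504}). Because the degree-$0$ and degree-$1$ parts agree for $X=\frak{g},\frak{g}^{(1)},\frak{g}^{(2)}$, all three cases run simultaneously.

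To organize the computation I would use the torus $T\subseteq\frak{g}_0\subseteq X$ of Remark \ref{l2.5}. Since $T\subseteq X$ and $X$ is a subalgebra, $N$ is $\mathrm{ad}\,T$-stable, and by (\ref{05153}) the generators (\ref{bl1130}) are $T$-weight vectors (a leader and its $\Gamma_i^q$-corrections share the same weight, because $w_i=\delta_{i'\in u}-\alpha_i$ is unchanged under $\alpha\mapsto\alpha-\varepsilon_i+\varepsilon_q$, $u\mapsto u-\langle i'\rangle+\langle q'\rangle$). Hence both $X$ and $N$ are $T$-weight graded, and I may assume $E$ is a single $T$-weight vector.

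The heart of the matter is the toral direction. For $E=\sum_{r\in\mathbf{Y}}c_rx_r\partial_r\in\frak{h}:=\mathrm{span}_{\mathbb{F}}\{x_r\partial_r\}$ a direct bracket computation gives
\[
[E,\mathrm{T_H}(g)]=\sum_{r\in\mathbf{Y}}\big(W(g)-c_r-c_{r'}\big)(-1)^{\mu(r)\mathrm{p}(g)}\partial_r(g)\partial_{r'},\qquad W(g):=\sum_{k\in\mathbf{Y}}c_k\deg_{x_k}(g),
\]
which is a scalar multiple of $\mathrm{T_H}(g)$ (hence lands in $HO$) if and only if $c_r+c_{r'}$ is independent of $r$. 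Testing this against the leaders of the degree-one generators (\ref{eqbl1504}) — which for $m>3$ realize every pair $(i,q)\in\mathbf{Y_0}\times\mathbf{Y_0}$ as $(\mathbf{I},\widetilde{\mathbf{I}})$-indices — forces $c_i+c_{i'}$ to be constant in $i$, and the space so defined is exactly $H+\mathbb{F}h_1$ (the element $\sum_i\mathrm{T_H}(x_ix_{i'})$ together with $h_1$ spans the extra directions beyond $T$). Conversely these operators act as scalars on every generator, so $N\cap\frak{h}=H+\mathbb{F}h_1$.

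Finally I would treat the off-diagonal part: for a $T$-weight vector $E$ built from root vectors $x_k\partial_l$ ($k\neq l$), I impose $[E,A]\in X_1$ for $A$ ranging over the generators with leaders $\mathrm{T_H}(x^{(3\varepsilon_i)})$, $\mathrm{T_H}(x_{i'}x_{j'}x_{k'})$ and the mixed ones, and read off linear equations on the $a_{kl}$. These should force $E$ to be Hamiltonian of vanishing divergence, i.e. $E=\mathrm{T_H}(f)$ with $\Delta(f)=0$, whence $E\in\frak{g}_0$ (for nonzero weight the divergence- and odd-Euler-defects, being of weight $0$, cannot occur). Assembling the weight components then yields $N\subseteq\frak{g}_0+H+\mathbb{F}h_1$. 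I expect the main obstacle to be precisely this last step: verifying, uniformly in $m$ and for all three algebras $X$, that imposing $[E,X_1]\subseteq X_1$ on the root components leaves no survivors outside $\frak{g}_0$. This is delicate because $X_1$ is described through leaders carrying $\Gamma$-corrections, so the membership ``$[E,A]\in X_1$'' must be checked at the level of leaders rather than term by term; a secondary and easy point is to confirm that $\frak{g}_0$, $\frak{g}^{(1)}_0$ and $\frak{g}^{(2)}_0$ coincide, so that the symbol $\frak{g}_0$ in the statement is unambiguous.
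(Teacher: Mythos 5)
There is a genuine gap, and it sits exactly at the heart of the lemma: the off-diagonal case is never proved. After the weight-space reduction you arrive at the decisive claim that imposing $[E,A]\in X_1$ for the degree-one generators $A$ of (\ref{eqbl1504}) ``should force'' a root-type $E$ to equal $\mathrm{T_H}(f)$ with $\Delta(f)=0$, and you yourself flag this as the main obstacle; no computation is given, and nothing in the proposal indicates how the leader-plus-$\Gamma$-correction bookkeeping in the membership test ``$[E,A]\in X_1$'' would be controlled uniformly in $m$ and for all three algebras. Since everything else in the proposal (the $\mathfrak{gl}(m|m)$ parametrization, the $T$-weight decomposition, the diagonal case) is preparatory, the lemma is not established. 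The diagonal step also has a logical soft spot: the normalizer condition demands $[E,\mathrm{T_H}(g)]\in X$, not that $[E,\mathrm{T_H}(g)]$ be a scalar multiple of $\mathrm{T_H}(g)$, so your ``if and only if'' would require comparing $[E,\mathrm{T_H}(g)]$ against the full $T$-weight space of $X$ in the relevant weight, which is not done.

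The route that closes the gap is precisely the one you discarded. You correctly observe that brackets with $X_{-1}=W_{-1}$ give no information at $t=0$, but you then conclude that the binding condition must involve $X_1$; in fact the degree-zero condition $[X_0,E]\subseteq X_0$ does all the work. For $i\in\mathbf{Y_0}$, $j\in\mathbf{Y_1}$, $j\neq i'$, the element $\mathrm{T_H}(x_ix_j)$ lies in $\mathfrak{A}_2$ and has $\mathbb{Z}$-degree zero, hence belongs to $X_0$ for all three choices of $X$, so the normalizer condition gives $[\mathrm{T_H}(x_ix_j),E]\in X\subseteq\overline{HO}\cap\overline{S}$. Writing $E=\sum_{k,l}\alpha_{kl}x_k\partial_l$, this bracket is again a degree-zero vector field with explicit coefficients, and imposing on those coefficients the defining symmetry of $\overline{HO}$ produces the linear relations (\ref{bl05.063})--(\ref{bl05.065}) among the $\alpha_{kl}$ (specializing according to the parity of $E$). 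These relations alone let one rewrite $E$ explicitly as a combination of the elements $\mathrm{T_H}(x_ix_{j'})$ with $i\neq j$ (which lie in $\frak{g}_0$), the elements $\mathrm{T_H}(x_ix_{i'})$ (which span $H$), and $h_1$. No degree-one generators, leaders, or $\Gamma$-corrections enter anywhere, so the ``delicate'' verification that stalls your argument never has to be performed.
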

\begin{proof}
Let $X:=\frak{g}$, $\frak{g}^{(1)}$, or $\frak{g}^{(2)}.$
Let $E$ be a $\mathbb{Z}_2$-homogeneous element of $\mathrm{Nor}_W(X) \cap
W_0$. Then
\[
E=\sum\limits_{i=1}^{2m}\sum\limits_{j=1}^{2m}\alpha
_{ij}x_i\partial_j,\quad \alpha _{ij}\in \mathbb{F}.
\]
Given $i\in \mathbf{Y_0},\ j\in \mathbf{Y_1}$ and $i\neq j'$, we have $\mathrm{T_H}( x_ix_j) \in \frak{g}_0$ and
\begin{eqnarray*}
[\mathrm{T_H}( x_ix_j) ,E]
&=&\Big( \alpha _{i'i'}x_j-\alpha _{j'i'}x_i-\sum\limits_{k=1}^{2m}\alpha _{kj}x_k\Big)
\partial_{i'}\\
&+&\Big( \alpha _{i'j'}x_j-\alpha_{j'j'}x_i+
\sum\limits_{k=1}^{2m}\alpha _{ki}x_k\Big) \partial_{j'}
+\sum_{k\neq i,j}( \alpha _{i'k'}x_j-\alpha _{j'k'}x_i)\partial_{k'}.
\end{eqnarray*}
Let $a_l$ denote the coefficient of  $\partial_l$ in the right side
of equation  above. Note that
$[\mathrm{T_H}( x_ix_j) ,E] \in \overline{HO}\cap\overline{S}.$
Since $\mathrm{p}([\mathrm{T_H}( x_ix_j) ,E]) =\mathrm{p}(E),$ by virtue of the equality
\[
\partial_i( a_{j'}) =( -1) ^{\mu(i) \mu( j) +( \mu( i) +\mu( j))(\mathrm{p}(E)+\overline{1})}
\partial_j( a_{i'}) ,
\]
an elementary computation shows that
\begin{equation}\label{bl05.063}
( -1) ^{\mathrm{p}(E)}\alpha _{ii}+\alpha _{i'i'}=\alpha _{jj}+( -1)
^{\mathrm{p}(E)} \alpha_{j'j'},\quad i\in \mathbf{Y_0},\ j\in
\mathbf{Y_1},\ i\neq j'.
\end{equation}
Similarly, by virtue of equations
\[
\partial_j( a_{k'}) =( -1) ^{\mu(k) \mu( j) +( \mu ( k) +\mu
( j))(\mathrm{p}(E)+\overline{1})}\partial_k( a_{j'})
\]
and
\[
\partial_i( a_{k'}) =( -1) ^{\mu(k) \mu( i) +( \mu ( k) +\mu
( i))(\mathrm{p}(E)+\overline{1})}\partial_k( a_{i'}),
\]
we obtain that
\begin{equation}\label{bl05.064}
\alpha _{ki}=-( -1) ^{\mu ( k')\mathrm{p}(E)}\alpha _{i'k'},\quad
i\in \mathbf{Y_0},\ k\in \mathbf{Y}\backslash\{ i\}
\end{equation}
and
\begin{equation}\label{bl05.065}
\alpha _{kj}=( -1) ^{\mu( k)(\mathrm{p}(E)+ \overline{1}) }\alpha _{j'k'},
\quad j\in \mathbf{Y_1},\ k\in \mathbf{Y}\backslash \{ j\} .
\end{equation}

\noindent\textbf{Case 1:} If $\mathrm{p}(E)=\overline{0},$ it is easy to see from (\ref{bl05.063})--(\ref{bl05.065}) that
\begin{equation}\label{bl05.066}
\alpha _{ii}+\alpha _{i'i'}=\alpha _{jj}+\alpha_{j'j'},
\quad i\in \mathbf{Y_0},\ j\in \mathbf{Y_1},
\end{equation}
\begin{equation}\label{bl05.067}
\alpha _{ki}=-\alpha _{i'k'},\quad i\in \mathbf{Y_0},\
k\in \mathbf{Y}\backslash \{ i\} ,
\end{equation}
\begin{equation}\label{bl05.068}
\alpha _{kj}=( -1) ^{\mu ( k) }\alpha_{j'k'},
\quad j\in\mathbf{ Y_1},\ k\in \mathbf{Y}\backslash\{ j\} .
\end{equation}
Note that $\alpha _{ij}=0,$ whenever $\mu( i) \neq \mu ( j) .$
We conclude from (\ref{bl05.067}) and (\ref{bl05.068}) that
\begin{equation}\label{bl05.069}
\alpha _{ij}=( -1) ^{\mu ( i) +\mu(j') }\alpha _{j'i'},\quad i, j\in
\mathbf{Y},\ i\neq j.
\end{equation}
We may suppose by (\ref{bl05.066}) that $\alpha _{ii}+\alpha _{i'i'}=\alpha$,
for any $i\in \mathbf{Y_0}.$ Applying (\ref{bl05.069}) we have
\begin{eqnarray*}
E &=&\sum\limits_{i=1}^{2m}\alpha _{ii}x_i\partial_i+\frac 12\sum_{i\neq
j}( -1) ^{\mu( j') }\alpha_{ij}
\mathrm{T_H}( x_ix_{j'}) .
\end{eqnarray*}
Moreover,
\begin{eqnarray*}
\sum\limits_{i=1}^{2m}\alpha _{ii}x_i\partial_i=-\sum\limits_{i=1}^{m}\alpha_{ii}
\mathrm{T_H}( x_ix_{i'})+\alpha h_1.
\end{eqnarray*}
Hence $E\in \frak{g}_0+H+\mathbb{F}\cdot h_1.$\\

\noindent\textbf{Case 2:} If $\mathrm{p}(E)=\overline{1},$ then $\alpha _{ij}=0$ whenever
$i,j\in \mathbf{Y}$ and  $\mu( i) =\mu ( j) .$ By virtue of (\ref{bl05.064}) and (\ref{bl05.065}),
we have
\begin{equation}\label{bl05.0610}
\alpha _{ki}=( -1) ^{\mu( k) }\alpha_{i'k'},
\quad i\in \mathbf{Y_0},\ k\in\mathbf{ Y}\backslash\{ i\}
\end{equation}
and
\begin{equation}\label{bl05.0611}
\alpha _{kj}=\alpha _{j'k'},\quad j\in \mathbf{Y_1},\
k\in \mathbf{Y}\backslash \{ j\} .
\end{equation}
Observe that (\ref{bl05.0610}) and (\ref{bl05.0611}) imply that $ \alpha _{kl}=( -1) ^{\mu(k) }
\alpha _{l'k'},\ k,l\in \mathbf{Y}. $ Therefore, we obtain that
\begin{eqnarray*}
E =\sum_{\mu( i) \neq \mu( j) }\alpha
_{ij}x_i\partial_j =\frac 12\sum_{\mu ( i) \neq \mu (j) }
( -1) ^{\mu( i) }\alpha _{ij}\mathrm{T_H}( x_ix_{j'}).
\end{eqnarray*}
Now we obtain the desired result.
\end{proof}

Note that $HO(m,m;\underline{t})$ is an ideal of $\overline{HO}(m,m;\underline{t})$
and $S'(m,m;\underline{t})$ is an ideal of $\overline{S}(m,m;\underline{t})$. From the definitions we have the following
\begin{proposition}\label{p1}
$X$ is an ideal of $\overline{HO}(m,m;\underline{t})\cap\overline{S}(m,m;\underline{t})$, where
$$X=\frak{g}(m,m;\underline{t}), \frak{g}(m,m;\underline{t})^{(1)}, \, \mbox{or}\, \, \frak{g}(m,m;\underline{t})^{(2)}.$$
\end{proposition}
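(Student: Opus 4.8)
The plan is to first establish the claim for $X=\frak{g}$ by a direct bracket computation, and then to deduce it for $\frak{g}^{(1)}$ and $\frak{g}^{(2)}$ by the general principle that the derived subalgebra of an ideal is again an ideal. Write $L:=\overline{HO}(m,m;\underline{t})\cap\overline{S}(m,m;\underline{t})$. Since both $\overline{HO}$ and $\overline{S}$ are subalgebras of $W$, their intersection $L$ is again a subalgebra, so the assertion ``$X$ is an ideal of $L$'' is meaningful.

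For the base case $X=\frak{g}$, I would use the description $\frak{g}=S'\cap HO$ together with the two ideal facts recalled just above the statement: $HO$ is an ideal of $\overline{HO}$, and $S'$ is an ideal of $\overline{S}$. Take $D\in L$ and $E\in\frak{g}$. On one hand $D\in\overline{HO}$ and $E\in HO$, so $[D,E]\in HO$ because $HO$ is an ideal of $\overline{HO}$; on the other hand $D\in\overline{S}$ and $E\in S'$, so $[D,E]\in S'$ because $S'$ is an ideal of $\overline{S}$. Hence $[D,E]\in S'\cap HO=\frak{g}$, which proves that $\frak{g}$ is an ideal of $L$.

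For the two derived algebras I would invoke the elementary fact that if $I$ is an ideal of a Lie superalgebra, then $[I,I]$ is again an ideal. Indeed, for homogeneous $x\in L$ and $a,b\in I$, the graded Jacobi identity gives
$$[x,[a,b]]=[[x,a],b]+(-1)^{\mathrm{p}(x)\mathrm{p}(a)}[a,[x,b]],$$
and since $[x,a]\in I$ and $[x,b]\in I$ (as $I$ is an ideal of $L$), both summands lie in $[I,I]$; extending bilinearly yields $[L,[I,I]]\subseteq[I,I]$. Applying this with $I=\frak{g}$ shows that $\frak{g}^{(1)}=[\frak{g},\frak{g}]$ is an ideal of $L$, and applying it once more with $I=\frak{g}^{(1)}$ shows that $\frak{g}^{(2)}=[\frak{g}^{(1)},\frak{g}^{(1)}]$ is an ideal of $L$.

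I expect no serious obstacle here: the entire argument is formal once the intersection description $\frak{g}=S'\cap HO$ and the two ambient ideal properties are in hand. The only point deserving a moment's care is the sign carried by the super Jacobi identity in the derived-subalgebra step, but this sign does not affect membership in $[I,I]$ and hence does not affect the conclusion.
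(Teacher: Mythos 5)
Your proof is correct and takes essentially the same approach as the paper, which merely records that $HO$ is an ideal of $\overline{HO}$ and $S'$ is an ideal of $\overline{S}$ and then asserts the proposition ``from the definitions.'' Your write-up supplies exactly the details that remark leaves implicit: the intersection argument showing $\frak{g}=S'\cap HO$ is an ideal of $\overline{HO}\cap\overline{S}$, followed by the standard super-Jacobi argument that the derived subalgebra of an ideal is again an ideal, applied twice to reach $\frak{g}^{(1)}$ and $\frak{g}^{(2)}$.
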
 In conclusion, we can obtain:
\begin{theorem}\label{l3.7}
$\mathrm{Nor}_W(X)=\overline{HO}\cap\overline{S}\oplus\mathbb{F}\cdot h_1$,
where $X=\frak{g}$, $\frak{g}^{(1)}$, or $\frak{g}^{(2)}.$
\end{theorem}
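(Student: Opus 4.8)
The plan is to reduce the statement to a degree-by-degree computation. Since $X$ and $W$ are $\mathbb{Z}$-graded and the bracket is homogeneous, $\mathrm{Nor}_W(X)$ is itself a $\mathbb{Z}$-graded subspace of $W$: writing $E=\sum_t E_t$ with $E_t\in W_t$ and testing $[E,x]\in X$ on homogeneous $x\in X$ forces each component $[E_t,x]$ into $X$, so each $E_t$ normalizes $X$. Likewise $\overline{HO}\cap\overline{S}$ is graded with homogeneous parts $\overline{HO}_t\cap\overline{S}_t$. Hence it suffices to identify $\mathrm{Nor}_W(X)\cap W_t$ in each degree and sum. Throughout, the inclusion $\overline{HO}_t\cap\overline{S}_t\subseteq\mathrm{Nor}_W(X)$ is free: by Proposition~\ref{p1}, $X$ is an ideal of $\overline{HO}\cap\overline{S}$, so every homogeneous element of $\overline{HO}\cap\overline{S}$ sends $X$ into $X$.

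For $t\geq 1$ the opposite inclusion $\mathrm{Nor}_W(X)\cap W_t\subseteq\overline{HO}_t\cap\overline{S}_t$ is precisely Lemma~\ref{l3.5}, giving equality in every positive degree. For $t=-1$ I would first check $\overline{HO}_{-1}\cap\overline{S}_{-1}=W_{-1}$: each $\partial_r=\pm\mathrm{T_H}(x_{r'})$ lies in $HO$ and has vanishing divergence, so $W_{-1}\subseteq HO\cap S'\subseteq\overline{HO}\cap\overline{S}$; the upper bound $\mathrm{Nor}_W(X)\cap W_{-1}\subseteq W_{-1}$ is then automatic, and equality follows. Thus in all degrees $t\neq 0$ one obtains $\mathrm{Nor}_W(X)\cap W_t=\overline{HO}_t\cap\overline{S}_t$, with no contribution from $h_1$, which lies in $W_0$.

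The degree-$0$ part carries the real content. The upper bound comes from Lemma~\ref{l3.6}, which gives $\mathrm{Nor}_W(X)\cap W_0\subseteq\frak{g}_0+H+\mathbb{F}h_1$. Here $\frak{g}_0\subseteq\overline{HO}_0\cap\overline{S}_0$ because $\frak{g}=S'\cap HO\subseteq\overline{S}\cap\overline{HO}$, while for $H$ I would compute $\mathrm{T_H}(x_ix_{i'})=x_{i'}\partial_{i'}-x_i\partial_i$, whence $\mathrm{div}\,\mathrm{T_H}(x_ix_{i'})=-2\in\mathbb{F}$, so that $H\subseteq\overline{HO}_0\cap\overline{S}_0$ as well. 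Consequently $\mathrm{Nor}_W(X)\cap W_0\subseteq(\overline{HO}_0\cap\overline{S}_0)+\mathbb{F}h_1$. The reverse inclusion again invokes Proposition~\ref{p1} for the $\overline{HO}_0\cap\overline{S}_0$ summand, so the only remaining tasks are to show $h_1\in\mathrm{Nor}_W(X)$ and that the sum is direct.

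The key observation will be that $h_1$ acts on $\mathcal{O}(m,m;\underline{t})$ as the odd-degree counting operator, $h_1(x^{(\alpha)}x^u)=|u|\,x^{(\alpha)}x^u$. Because every monomial occurring in $\mathrm{T_H}(a)(b)$ has odd degree $|u|+|v|-1$ whenever $a,b$ are homogeneous of odd degrees $|u|,|v|$, expanding $[h_1,\mathrm{T_H}(a)](b)=h_1(\mathrm{T_H}(a)(b))-\mathrm{T_H}(a)(h_1(b))$ yields
$$[h_1,\mathrm{T_H}(x^{(\alpha)}x^u)]=(|u|-1)\,\mathrm{T_H}(x^{(\alpha)}x^u).$$
Since $\Gamma_i^q=\nabla_q\Delta_i$ preserves the odd degree $|u|$, each spanning element of the form (\ref{bl1130}) and (\ref{eqbl1504}) of $X$ is an eigenvector of $\mathrm{ad}\,h_1$, so $[h_1,X]\subseteq X$. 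Directness then follows from $h_1\notin\overline{HO}$: testing the defining identity of $\overline{HO}_0$ on $h_1=\sum_l x_{l'}\partial_{l'}$ with $i\in\mathbf{Y_0}$ and $j=i'\in\mathbf{Y_1}$ gives $0$ on the left but a nonzero scalar on the right. Assembling the three degree ranges produces $\mathrm{Nor}_W(X)=(\overline{HO}\cap\overline{S})\oplus\mathbb{F}h_1$. I expect the degree-$0$ step to be the main obstacle, and within it the verification that $\mathrm{ad}\,h_1$ is \emph{diagonal} on the generators of $X$ rather than merely carrying them into $\overline{HO}\cap\overline{S}$; this diagonality is exactly what lets $h_1$, an element lying outside $\overline{HO}$, still normalize $X$.
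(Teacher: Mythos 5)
Your proof is correct and follows essentially the same route as the paper's: Lemmas \ref{l3.5} and \ref{l3.6} give the upper bound, Proposition \ref{p1} gives the reverse inclusion for $\overline{HO}\cap\overline{S}$, and the eigenvector identity $[h_1,\mathrm{T_H}(x^{(\alpha)}x^u)]=(|u|-1)\mathrm{T_H}(x^{(\alpha)}x^u)$ shows $h_1\in\mathrm{Nor}_W(X)$. The only difference is that you spell out details the paper leaves implicit (the degree $-1$ component, the inclusion $\frak{g}_0+H\subseteq\overline{HO}_0\cap\overline{S}_0$, and the directness of the sum via $h_1\notin\overline{HO}$), all of which check out.
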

\begin{proof}
Suppose  $X:=\frak{g}$, $\frak{g}^{(1)}$, or $\frak{g}^{(2)}.$
Applying
Lemmas \ref{l3.5} and \ref{l3.6} we have
$\mathrm{Nor}_W(X)\subset\overline{HO}\cap\overline{S}\oplus\mathbb{F}\cdot h_1$.
Note that $[h_1, \mathrm{T_H}(x^\alpha x^u)]=(|u|-1)\mathrm{T_H}(x^\alpha x^u)$, that is $h_1 \in \mathrm{Nor}_W(X).$  Hence
by virtue of Proposition \ref{p1} we obtain $\mathrm{Nor}_W(X)=\overline{HO}\cap\overline{S}\oplus\mathbb{F}\cdot h_1.$
\end{proof}
\begin{remark}\label{r3}
Let $X=\frak{g}$, $\frak{g}^{(1)}$ or $\frak{g}^{(2)}$.
Suppose  $h:=\sum_{i\in \mathbf{Y}}x_i\partial_i$. We obtain
$\mathrm{ad}h$ is the $\mathbb{Z}$-degree derivation of $X$, that is for all $\mathbb{Z}$ homogeneous element $A\in X_i,$
$$[h, A]=iA.$$
Note that $h=-\sum_{i\in \mathbf{Y_0}}\mathrm{T_H}(x_ix_{i'})+2h_1$,
where $\mathrm{T_H}(x_ix_{i'})\in \overline{HO}\cap \overline{S}.$
We can obtain $\mathrm{Nor}_W(X)=\overline{HO}\cap\overline{S}\oplus\mathbb{F}\cdot h.$
\end{remark}
Finally we characterize the derivations of $\frak{g}$, $\frak{g}^{(1)}$ and $\frak{g}^{(2)}$.
\begin{theorem}\label{t3.9} Suppose  $X=\frak{g}$, $\frak{g}^{(1)}$, or $\frak{g}^{(2)},$ we have
$$\mathrm{Der}(X)=\mathrm{ad}_X(\overline{HO}\cap\overline{S}\oplus\mathbb{F}\cdot h)\oplus
\mathrm{span}_{\mathbb{F}}\{(\mathrm{ad}_X(\partial_i))^{p^{k_i}}| i\in\mathbf{Y_0}, \quad 1\leq k_i< t_i\},$$
where  $\mathrm{ad}h$ is the degree derivation of $X$. Moreover,
$$\mathrm{Der}(\frak{g})\cong \mathrm{Der}(\frak{g}^{(1)})\cong \mathrm{Der}(\frak{g}^{(2)}).$$
\end{theorem}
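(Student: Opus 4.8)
The plan is to split $\mathrm{Der}(X)=\mathrm{Der}^-(X)\oplus\mathrm{Der}^+(X)$ into its negative and nonnegative parts and treat the two separately. The negative part is already in hand: Proposition \ref{t3.3} gives $\mathrm{Der}^-(X)$ as the span of the $\mathrm{ad}_X(\partial_i)$ $(i\in\mathbf{Y})$, which are precisely $\mathrm{ad}_X$ of the degree $-1$ layer $(\overline{HO}\cap\overline{S})_{-1}$ since $\partial_i=\pm\mathrm{T_H}(x_{i'})\in HO$, together with the higher special derivations $(\mathrm{ad}_X(\partial_i))^{p^{k_i}}$ of degree $-p^{k_i}<-1$. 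So it remains to identify $\mathrm{Der}^+(X)$ with $\mathrm{ad}_X$ of the nonnegative part of $\overline{HO}\cap\overline{S}\oplus\mathbb{F}h$, after which the two descriptions assemble into the claimed formula.

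For the nonnegative part I would work through the homomorphism $\rho\colon\mathrm{Nor}_W(X)\to\mathrm{Der}(X)$, $E\mapsto\mathrm{ad}_X(E)$, which is well defined because $\mathrm{Nor}_W(X)$ normalizes $X$. First I would check $\rho$ is injective: an element of $\ker\rho$ centralizes $X\supseteq W_{-1}$, hence has constant coefficients and so lies in $W_{-1}=X_{-1}\subseteq X$; being a central element of the centerless algebra $X$, it must vanish, so $C_W(X)=0$. Thus $\rho$ embeds $\mathrm{Nor}_W(X)=\overline{HO}\cap\overline{S}\oplus\mathbb{F}h$ (Theorem \ref{l3.7}) into $\mathrm{Der}(X)$, and its image in nonnegative degree clearly lies in $\mathrm{Der}^+(X)$. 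The substance is the reverse inclusion.

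To prove that every homogeneous $\varphi\in\mathrm{Der}_d(X)$ with $d\geq 0$ lies in the image of $\rho$, I would first realize $\varphi$ by a vector field. Since $X_{-1}=W_{-1}=\mathrm{span}_{\mathbb{F}}\{\partial_i\mid i\in\mathbf{Y}\}$, applying $\varphi$ to the relations $[\partial_i,\partial_j]=0$ yields the symmetry of mixed partials for the coefficients of the $\varphi(\partial_i)$; solving the resulting system in the divided power algebra produces an element $E\in W_d$ with $\mathrm{ad}_X(E)|_{X_{-1}}=\varphi|_{X_{-1}}$. Then $\psi:=\varphi-\mathrm{ad}_X(E)$ annihilates $X_{-1}$, and an induction on the grading using transitivity of $X$ (if $y\in X_j$, $j\geq 0$, and $[y,X_{-1}]=0$ then $y\in W_{-1}\cap X_j=0$) forces $\psi=0$; hence $\varphi=\mathrm{ad}_X(E)$. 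Finally, because $\varphi(X)\subseteq X$ we get $[E,X]\subseteq X$, i.e. $E\in\mathrm{Nor}_W(X)_d$. Combining with the negative part then yields $\mathrm{Der}(X)=\mathrm{ad}_X(\overline{HO}\cap\overline{S}\oplus\mathbb{F}h)\oplus\mathrm{span}_{\mathbb{F}}\{(\mathrm{ad}_X(\partial_i))^{p^{k_i}}\mid i\in\mathbf{Y_0},\,1\leq k_i<t_i\}$.

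I expect the integration step --- realizing an abstract nonnegative-degree derivation by an honest vector field $E\in W_d$ --- to be the main obstacle, since it is exactly here that the divided power structure and the degree bound $d\geq 0$ must be used to rule out a cohomological obstruction (the analogous obstruction in negative degree is precisely what produces the special derivations $(\mathrm{ad}\,\partial_i)^{p^{k_i}}$). The degree-$0$ summand $\mathbb{F}h$ enters naturally at this stage, as $h=\sum_{i\in\mathbf{Y}}x_i\partial_i\in\mathrm{Nor}_W(X)_0$ realizes the grading derivation (Remark \ref{r3}) while $h\notin\overline{HO}\cap\overline{S}$. For the final isomorphism statement, I would note that $\frak{g}^{(1)}$ and $\frak{g}^{(2)}$ are characteristic ideals of $\frak{g}$ (being iterated derived subalgebras), so restriction gives homomorphisms $\mathrm{Der}(\frak{g})\to\mathrm{Der}(\frak{g}^{(1)})\to\mathrm{Der}(\frak{g}^{(2)})$; since $\mathrm{Nor}_W(X)$ and the span of the special derivations are literally the same for all three choices of $X$, and $\rho$ is injective in each case, these restriction maps match the uniform descriptions term by term and are therefore isomorphisms.
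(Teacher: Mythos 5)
Your overall route is exactly the paper's: split off $\mathrm{Der}^-(X)$ via Proposition \ref{t3.3}, identify $\mathrm{Der}^+(X)$ with $\mathrm{ad}_X$ of the normalizer $\mathrm{Nor}_W(X)=\overline{HO}\cap\overline{S}\oplus\mathbb{F}\cdot h$ (Theorem \ref{l3.7}, Remark \ref{r3}), and obtain $\mathrm{Der}(\frak{g})\cong\mathrm{Der}(\frak{g}^{(1)})\cong\mathrm{Der}(\frak{g}^{(2)})$ from restriction maps. The one place where the paper and you diverge is the crucial step that every $\varphi\in\mathrm{Der}_t(X)$ with $t\geq 0$ equals $\mathrm{ad}E|_X$ for some $E\in\mathrm{Nor}_W(X)\cap W_t$: the paper disposes of this by citing \cite[Proposition 2.4]{wz}, whereas you sketch a proof. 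Your centralizer claim ($C_W(X)=0$, the paper's Lemma \ref{l4.2}), your transitivity induction, and your handling of the ``Moreover'' part are all sound and consistent with what the paper does.

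The genuine gap is in your integration step, and the mechanism you propose for closing it would not work. In the divided power algebra, closedness (the mixed-partials symmetry obtained by applying $\varphi$ to $[\partial_i,\partial_j]=0$) does \emph{not} imply exactness: the obstruction space is spanned by the classes of $x_i^{(\pi_i)}\,dx_i$, $i\in\mathbf{Y_0}$, which sit in degree $\pi_i=p^{t_i}-1>0$. Hence the bound $t\geq 0$ does not rule the obstruction out; on the contrary, the obstruction is a purely nonnegative-degree phenomenon. (Your parenthetical is also backwards: for $t<0$ one has $\varphi(\partial_i)\in X_{t-1}=0$, so there is nothing to integrate; the special derivations $(\mathrm{ad}\partial_i)^{p^{k_i}}$ survive because the \emph{transitivity induction}, not the integration, fails in negative degree.) To kill the obstruction one must invoke the derivation identity on degree-zero elements: for instance, first arrange $\varphi|_T=0$ by Lemma \ref{l2.8}, so that $\varphi$ preserves $T$-weight spaces, and then note that for a suitable $T_{kl}$ (available since $m\geq 3$) the $T_{kl}$-weight of any monomial $x_k^{(\pi_k)}\partial_j$ differs from that of $\partial_k$, which forces the coefficient of every obstruction term in $\varphi(\partial_k)$ to vanish; only then is the system $\partial_i(f_j)=g_{ij}$ solvable and your $E$ exists. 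This weight argument (or an equivalent one) is precisely the content of the proposition the paper cites; without it, the element $E$ you subtract need not exist and the rest of your argument has nothing to run on.
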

\begin{proof}
Consider  $\varphi\in \mathrm{Der}_t(X)$
where $t\geq 0$. By virtue of \cite[Proposition 2.4]{wz} there exists an element $E\in \mathrm{Nor}_W(X)\cap W_t$
such that
$$(\varphi-\mathrm{ad}E)|_X=0.$$
Then the first part of the assertion holds from  Proposition \ref{t3.3} and Remark \ref{r3}.

Define
\begin{eqnarray*}
\varphi_1: \mathrm{Der}(\frak{g})&\longrightarrow &\mathrm{Der}(\frak{g}^{(1)})\\
\quad\quad\quad \phi &\longmapsto & \phi |_{\frak{g}^{(1)}},
\end{eqnarray*}
for $\phi\in\mathrm{Der}(\frak{g})$.
Obviously, $\varphi_1$ is a monomorphism.
Suppose  $\phi|_{\frak{g}^{(1)}}=0$ for any $\phi\in \mathrm{Der}(\frak{g}).$
We obtain
$$[\phi(A), (\frak{g}^{(1)})_{-1}]\subset \phi[A, \frak{g}^{(1)}]=0$$
for any $A \in \frak{g}$.
Then $\phi(A)\in (\frak{g}^{(1)})_{-1}$.
Note that $[\phi(A), (\frak{g}^{(1)})_{0}]\subset \phi[A, \frak{g}^{(1)}]=0.$
Hence we have $\phi(A)=0$
and $\mathrm{Der}(\frak{g})\cong \mathrm{Der}(\frak{g}^{(1)})$.
Similarly, $\mathrm{Der}(\frak{g}^{(1)})\cong \mathrm{Der}(\frak{g}^{(2)}).$
\end{proof}

\section{Outer derivations}
Let $X=\frak{g}$, $\frak{g}^{(1)}$, or $\frak{g}^{(2)}$. We denote
 the outer derivation algebras $\mathrm{Der_{out}}(X):=\mathrm{Der}(X)/\mathrm{ad}(X),$
which will be determined in this section. Recall $\mathrm{ad}h$ is the $\mathbb{Z}$-degree derivation of $X$,
 where $h=\Sigma_{i\in \mathbf{Y_0}}x_i\partial_i$. For future reference, we state the following results.
\begin{lemma}\label{l4.1}
The following statements hold in $\mathrm{Der}\mathcal{O}(m,m;\underline{t})$:

\item[$\mathrm{(1)}$] $[\partial^{p^{k_i}}_i, \partial^{p^{k_j}}_j]=0, \quad i,j \in \mathbf{Y_0}, \quad 1 \leq k_i<t_i, 1 \leq k_j<t_j$;

\item[$\mathrm{(2)}$] $[h, \partial_i^{p^{k_i}}]=0, \quad i\in \mathbf{Y_0}, \quad 1\leq k_i<t_i$;

\item[$\mathrm{(3)}$] $[\partial^{p^{k_i}}_i, \overline{HO}]\subset \overline{HO}, \quad i\in \mathbf{Y_0}, \quad 1\leq k_i<t_i$;

\item[$\mathrm{(4)}$] $[h, \overline{HO}]\subset \overline{HO};$

\item[$\mathrm{(5)}$] $[\partial^{p^{k_i}}_i, \mathrm{T_H}(a)]=\mathrm{T_H}(\partial^{p^{k_i}}_i(a)),
 \quad i\in \mathbf{Y_0}, \quad 1\leq k_i<t_i, \quad a \in \mathcal{O}(m,m;\underline{t});$

\item[$\mathrm{(6)}$] $[\partial^{p^{k_i}}_i, \overline{S}]\subset S', \quad i\in \mathbf{Y_0}, \quad 1\leq k_i<t_i$;

 \item[$\mathrm{(7)}$] $[h, \overline{S}]\subset S';$

\item[$\mathrm{(8)}$] $[h, \mathrm{T_H}(x_jx_{j'})]=0, \quad j\in \mathbf{Y_0}$.

\end{lemma}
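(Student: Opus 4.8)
The plan is to reduce all eight identities to two elementary observations, one about the special derivations $\partial_i^{p^{k_i}}$ and one about the Euler operator $h=\sum_{i\in\mathbf{Y}}x_i\partial_i$ (which, as in Remark \ref{r3}, equals $-\sum_{i\in\mathbf{Y_0}}\mathrm{T_H}(x_ix_{i'})+2h_1$ and is the total-degree operator, so that $\mathrm{ad}h$ acts on a $\mathbb{Z}$-homogeneous $D\in\mathrm{Der}\mathcal{O}(m,m;\underline{t})$ of degree $d$ by $[h,D]=dD$). First I would record that for $i\in\mathbf{Y_0}$ and $1\leq k_i<t_i$ the operator $\partial_i^{p^{k_i}}$ is an \emph{even derivation} of $\mathcal{O}(m,m;\underline{t})$ (the $p$-th power of a derivation is again a derivation in characteristic $p$, iterated $k_i$ times; its parity is $p^{k_i}\mu(i)=\bar 0$) which \emph{commutes with every} $\partial_r$, $r\in\mathbf{Y}$, since the coordinate derivations pairwise commute. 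These two facts — evenness together with commuting, and $\mathrm{ad}h=$ grading operator — drive everything.

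From commuting I get (1) at once: $\partial_i^{p^{k_i}}$ and $\partial_j^{p^{k_j}}$ are even and commute, so their supercommutator vanishes. For (5) I would expand $\mathrm{T_H}(a)=\sum_{r\in\mathbf{Y}}(-1)^{\mu(r)\mathrm{p}(a)}\partial_r(a)\partial_{r'}$ and bracket with $\partial_i^{p^{k_i}}$; since $\partial_i^{p^{k_i}}$ is even and annihilates the factor $\partial_{r'}$ in the super-Leibniz expansion while commuting past $\partial_r$, each summand becomes $(-1)^{\mu(r)\mathrm{p}(a)}\partial_r(\partial_i^{p^{k_i}}(a))\partial_{r'}$, and evenness gives $\mathrm{p}(\partial_i^{p^{k_i}}(a))=\mathrm{p}(a)$, so the sum is exactly $\mathrm{T_H}(\partial_i^{p^{k_i}}(a))$. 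The same computation on a basis vector shows $[\partial_i^{p^{k_i}},f\partial_r]=\partial_i^{p^{k_i}}(f)\partial_r\in W$, i.e. $\partial_i^{p^{k_i}}$ normalizes $W$; this is precisely what keeps the brackets in (3) and (6) inside $W$ even though $\partial_i^{p^{k_i}}\notin W$.

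For (3) I would use the coefficient description of $\overline{HO}$: if $D=\sum_j a_j\partial_j\in\overline{HO}$ is parity-homogeneous, then $b_j:=\partial_i^{p^{k_i}}(a_j)$ are the coefficients of $[\partial_i^{p^{k_i}},D]$, and applying $\partial_i^{p^{k_i}}$ to the defining relation $\partial_r(a_{s'})=(-1)^{(\cdots)}\partial_s(a_{r'})$ and commuting it past $\partial_r,\partial_s$ shows the $b_j$ satisfy the same relation (the parity is unchanged as $\partial_i^{p^{k_i}}$ is even), whence $[\partial_i^{p^{k_i}},D]\in\overline{HO}$. For (6) I first establish the divergence identity $\mathrm{div}[\partial_i^{p^{k_i}},D]=\partial_i^{p^{k_i}}(\mathrm{div}D)$ by the same direct computation that proves (\ref{bl1305}) on each $f\partial_r$ — one cannot merely quote (\ref{bl1305}) since it is stated for $E\in W$ while $\partial_i^{p^{k_i}}\notin W$ — and then, for $D\in\overline{S}$, the value $\mathrm{div}D\in\mathbb{F}$ is constant and is annihilated by $\partial_i^{p^{k_i}}$, giving $\mathrm{div}[\partial_i^{p^{k_i}},D]=0$, i.e. $[\partial_i^{p^{k_i}},D]\in S'$.

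The remaining four are grading statements. Since $\overline{HO}=\oplus_i\overline{HO}_i$ is $\mathbb{Z}$-graded and $\mathrm{ad}h$ is the degree operator, $[h,\overline{HO}_i]=i\,\overline{HO}_i\subseteq\overline{HO}$ yields (4); likewise $\mathrm{T_H}(x_jx_{j'})=x_{j'}\partial_{j'}-x_j\partial_j$ has degree $0$, giving (8). For (2), $\partial_i^{p^{k_i}}$ is homogeneous of degree $-p^{k_i}$, so $[h,\partial_i^{p^{k_i}}]=-p^{k_i}\partial_i^{p^{k_i}}=0$ because $p^{k_i}\equiv 0\pmod p$ for $k_i\geq 1$. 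Finally (7) follows from (\ref{bl1305}) with $E=h$: $\mathrm{div}[h,D]=h(\mathrm{div}D)-D(\mathrm{div}h)$, where $\mathrm{div}D\in\mathbb{F}$ is killed by $h$ and a one-line count gives $\mathrm{div}h=\sum_{i\in\mathbf{Y_0}}1-\sum_{i\in\mathbf{Y_1}}1=0$, so $[h,D]\in S'$. The only genuinely delicate point throughout is the bookkeeping around $\partial_i^{p^{k_i}}\notin W$: one must compute the brackets in (3), (5) and (6) intrinsically rather than by invoking identities valid only inside $W$. Once the normalizing property $[\partial_i^{p^{k_i}},W]\subseteq W$ and the divergence identity are in place, each item reduces to a short verification.
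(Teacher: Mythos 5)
Your proof is correct, and it is genuinely more self-contained than the paper's: the paper disposes of items (1)--(5) by citing \cite[Lemma 16]{lzw} and declares (6)--(8) obvious, whereas you verify everything from two elementary principles --- that $\partial_i^{p^{k_i}}$ is an even derivation of $\mathcal{O}(m,m;\underline{t})$ commuting with every $\partial_r$ (hence $[\partial_i^{p^{k_i}},f\partial_r]=\partial_i^{p^{k_i}}(f)\partial_r$, so it normalizes $W$ coefficient-wise), and that $\mathrm{ad}\,h$ is the $\mathbb{Z}$-degree operator on all of $\mathrm{Der}\,\mathcal{O}(m,m;\underline{t})$, not merely on $W$. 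Your reductions are all sound: (1), (3), (5), (6) follow from the first principle, (2), (4), (8) from the second (with $-p^{k_i}\equiv 0\pmod p$ giving (2)), and (7) from (\ref{bl1305}) applied with $E=h\in W_0$ together with the count $\mathrm{div}\,h=m-m=0$. What your write-up adds beyond the paper's citation is the explicit treatment of the one genuinely delicate point, which you correctly flag: since $\partial_i^{p^{k_i}}\notin W$ when $k_i\geq 1$, identities the paper states only for elements of $W$ --- in particular (\ref{bl1305}) --- cannot simply be invoked in (3), (5), (6), and you instead establish $\mathrm{div}[\partial_i^{p^{k_i}},D]=\partial_i^{p^{k_i}}(\mathrm{div}\,D)$ and the coefficient description of $[\partial_i^{p^{k_i}},\overline{HO}]$ intrinsically. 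This costs some space but buys independence from \cite{lzw}, and the computations match what that reference is being cited for.
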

\begin{proof}
(1)--(5) are the direct consequences  of \cite[Lemma 16]{lzw}, (6)--(8) are obvious.
\end{proof}

\begin{lemma}\label{l4.2}
The centralizers of $X$  in $W$ are zero.
where $X=\frak{g}$, $\frak{g}^{(1)}$, or $\frak{g}^{(2)}.$
\end{lemma}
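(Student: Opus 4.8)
The plan is to prove $C_W(X)=0$ directly, exploiting two features of $X$ that hold uniformly for $X=\frak{g},\frak{g}^{(1)},\frak{g}^{(2)}$: all three share the full degree $-1$ space $X_{-1}=\mathrm{span}_{\mathbb{F}}\{\partial_r\mid r\in\mathbf{Y}\}$ (the elements $\mathrm{T_H}(x_r)=\pm\partial_{r'}$, $r\in\mathbf{Y}$, already exhaust $W_{-1}$), and all three contain the torus $T=\mathrm{span}_{\mathbb{F}}\{T_{ij}\}\subseteq X_0$ of Remark \ref{l2.5}. Accordingly I would first bracket against $X_{-1}$ to force a candidate centralizer into $W_{-1}$, and then use a $T$-weight argument to kill the remaining $W_{-1}$-part.

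For the reduction, take an arbitrary $E=\sum_{j\in\mathbf{Y}}g_j\partial_j\in C_W(X)$ with $g_j\in\mathcal{O}(m,m;\underline{t})$. Bracketing against $X_{-1}$ and using $[\partial_r,\partial_j]=0$ together with the freeness of $W$ over $\mathcal{O}(m,m;\underline{t})$ on $\{\partial_j\}$, the identity $0=[\partial_r,E]=\sum_{j\in\mathbf{Y}}\partial_r(g_j)\partial_j$ forces $\partial_r(g_j)=0$ for all $r,j\in\mathbf{Y}$. Since an element of $\mathcal{O}(m,m;\underline{t})$ annihilated by every $\partial_r$ must be a scalar (lower the $\mathbb{A}$-degree by the $\partial_i$, $i\in\mathbf{Y_0}$, and remove exterior variables by the $\partial_{i'}$, $i'\in\mathbf{Y_1}$), each $g_j\in\mathbb{F}$, whence $E\in W_{-1}$.

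It remains to annihilate $E=\sum_{r\in\mathbf{Y}}a_r\partial_r$ using $C_W(X)\subseteq C_W(T)$. Each $\partial_r$ is a $T$-weight vector: writing $\partial_r=\pm\mathrm{T_H}(x_{r'})$ and applying (\ref{05153}) to its leader $x_{r'}$, one obtains $[T_{ij},\partial_r]=(\delta_{ir}-\delta_{jr})\partial_r$ for $r\in\mathbf{Y_0}$ and $[T_{ij},\partial_r]=(\delta_{j'r}-\delta_{i'r})\partial_r$ for $r\in\mathbf{Y_1}$. Since $m>3$, for any fixed $r$ one may choose the pair $i,j$ so that the eigenvalue is $\pm1\neq0$; hence no $\partial_r$ lies in the zero weight space of $T$ on $W_{-1}$, i.e. that zero weight space is trivial. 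As $E$ commutes with $T$, it lies in this zero weight space, so $E=0$ and therefore $C_W(X)=0$.

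The computations here are short rather than delicate; the steps needing the most care are the structural checks that $X_{-1}=W_{-1}$ and $T\subseteq X_0$ for each of $\frak{g},\frak{g}^{(1)},\frak{g}^{(2)}$, and the sign bookkeeping that converts (\ref{05153}) into the explicit eigenvalues above. Once these are in place, the triviality of the zero weight space is immediate and yields the claim.
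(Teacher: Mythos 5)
Your proof is correct and follows essentially the same route as the paper: you first bracket against the $\partial_j\in X_{-1}$ to force the coefficients into $\mathbb{F}$, and then kill the resulting constant vector field using the elements $T_{ij}=\mathrm{T_H}(x_ix_{i'}-x_jx_{j'})$, which are exactly the elements the paper brackets against in its second step (you merely phrase that computation as a $T$-weight argument via (\ref{05153}), and you make explicit the structural facts $X_{-1}=W_{-1}$ and $T\subseteq X_0$ that the paper uses implicitly).
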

\begin{proof}
Suppose  $E=\sum_{i=1}^{2m}a_i\partial_i$ is a centralizer of $X$ in $W$,  where $a_i \in \mathcal{O}(m,m;\underline{t})$.
Since
$$[E, \partial_j]=0, \, \mbox{for  all}\, \, j\in \mathbf{Y},$$
we have $a_i\in \mathbb{F}$.
Since
$$[E,\mathrm{T_H}(x_lx_{l'}-x_kx_{k'})]=0, \, \mbox{for  all}\, \,l, k\in \mathbf{Y_0}, l\neq k,$$
we have $a_l=a_{l'}=a_k=a_{k'}=0$. Hence $E=0$.
\end{proof}
Now we establish the relationship between $\mathrm{Der_{out}}(X)$
 and the quotient algebra $(\overline{HO}\cap \overline{S})/X.$

\begin{theorem}\label{t4.3}
The outer derivation algebras
$$\mathrm{Der_{out}}(X)\cong L\oplus (\overline{HO}\cap \overline{S})/X,$$
where
\begin{eqnarray*}
L&:=&\big(\mathbb{F}\cdot h\oplus
\mathrm{span}_{\mathbb{F}}\{(\mathrm{ad}_X(\partial_i))^{p^{k_i}}\mid i\in\mathbf{Y_0},
\quad 1\leq k_i< t_i\}\oplus X\big)\big/X\\
&\cong& \mathbb{F}\cdot h\oplus
\mathrm{span}_{\mathbb{F}}\{(\mathrm{ad}_X(\partial_i))^{p^{k_i}}\mid i\in\mathbf{Y_0},
\quad 1\leq k_i< t_i\}
\end{eqnarray*}
  is an Abelian Lie superalgebra. Moreover $(\overline{HO}\cap \overline{S})/X$
is an ideal of $\mathrm{Der_{out}}(X)$, where $X=\frak{g},$ $\frak{g}^{(1)}$, or $\frak{g}^{(2)}.$
\end{theorem}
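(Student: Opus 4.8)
The plan is to read off $\mathrm{Der_{out}}(X)=\mathrm{Der}(X)/\mathrm{ad}(X)$ directly from the description of $\mathrm{Der}(X)$ in Theorem \ref{t3.9}, tracking which summands survive the quotient. First I would assemble three facts. By Proposition \ref{p1}, $X\subseteq\overline{HO}\cap\overline{S}$, so $\mathrm{ad}(X)=\mathrm{ad}_X(X)\subseteq\mathrm{ad}_X(\overline{HO}\cap\overline{S})$. By Lemma \ref{l4.2} the centralizer of $X$ in $W$ is zero, so the restriction map $\mathrm{ad}_X\colon\mathrm{Nor}_W(X)\to\mathrm{Der}(X)$ is injective, and it restricts to a Lie-superalgebra isomorphism $\overline{HO}\cap\overline{S}\cong\mathrm{ad}_X(\overline{HO}\cap\overline{S})$ carrying $X$ onto $\mathrm{ad}(X)$, whence $\mathrm{ad}_X(\overline{HO}\cap\overline{S})/\mathrm{ad}(X)\cong(\overline{HO}\cap\overline{S})/X$. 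Finally, Theorem \ref{t3.9} together with Theorem \ref{l3.7} and Remark \ref{r3} yields the vector-space direct sum
$$\mathrm{Der}(X)=\mathrm{ad}_X(\overline{HO}\cap\overline{S})\oplus\mathbb{F}\,\mathrm{ad}_X(h)\oplus\mathrm{span}_{\mathbb{F}}\{(\mathrm{ad}_X(\partial_i))^{p^{k_i}}\mid i\in\mathbf{Y_0},\ 1\leq k_i<t_i\}.$$
Since $\mathrm{ad}(X)$ lies in the first summand and meets the other two trivially, passing to the quotient by $\mathrm{ad}(X)$ gives
$$\mathrm{Der_{out}}(X)\cong(\overline{HO}\cap\overline{S})/X\oplus L,$$
where $L$ is the isomorphic image of $\mathbb{F}\,\mathrm{ad}_X(h)\oplus\mathrm{span}_{\mathbb{F}}\{(\mathrm{ad}_X(\partial_i))^{p^{k_i}}\}$, which is precisely the $L$ of the statement.

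Next I would verify that $L$ is Abelian. Writing $(\mathrm{ad}_X(\partial_i))^{p^{k_i}}=\mathrm{ad}_X(\partial_i^{p^{k_i}})$ and using that $\mathrm{ad}$ is a homomorphism, each bracket of the generators of $L$ equals $\mathrm{ad}_X$ of the corresponding bracket in $W$. These vanish by Lemma \ref{l4.1}(1), which gives $[\partial_i^{p^{k_i}},\partial_j^{p^{k_j}}]=0$, and by Lemma \ref{l4.1}(2), which gives $[h,\partial_i^{p^{k_i}}]=0$. Hence every bracket in $L$ is zero, so $L$ is an Abelian Lie superalgebra.

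For the final assertion I would show that $\mathrm{ad}_X(\overline{HO}\cap\overline{S})$ is an ideal of $\mathrm{Der}(X)$; since it contains $\mathrm{ad}(X)$, the ideal property descends to $(\overline{HO}\cap\overline{S})/X$ inside $\mathrm{Der_{out}}(X)$. Because $\overline{HO}$ and $\overline{S}$ are subalgebras of $W$, so is $\overline{HO}\cap\overline{S}$, which gives $[\mathrm{ad}_X(\overline{HO}\cap\overline{S}),\mathrm{ad}_X(\overline{HO}\cap\overline{S})]\subseteq\mathrm{ad}_X(\overline{HO}\cap\overline{S})$. For the two remaining generators of $\mathrm{Der}(X)$ I would invoke Lemma \ref{l4.1}: parts (4) and (7) give $[h,\overline{HO}\cap\overline{S}]\subseteq\overline{HO}\cap\overline{S}$, and parts (3) and (6) give $[\partial_i^{p^{k_i}},\overline{HO}\cap\overline{S}]\subseteq\overline{HO}\cap\overline{S}$ (using $S'\subseteq\overline{S}$). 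Therefore $[\mathrm{Der}(X),\mathrm{ad}_X(\overline{HO}\cap\overline{S})]\subseteq\mathrm{ad}_X(\overline{HO}\cap\overline{S})$, as required.

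The result is essentially a formal consequence of Theorem \ref{t3.9} and Lemmas \ref{l4.1}--\ref{l4.2}, so the bracket computations of the last two steps are routine. The step I expect to require the most care is the first one: confirming that the three pieces of Theorem \ref{t3.9} genuinely form a direct sum in which $\mathrm{ad}(X)$ is confined to $\mathrm{ad}_X(\overline{HO}\cap\overline{S})$, while $\mathbb{F}\,\mathrm{ad}_X(h)$ and the $(\mathrm{ad}_X(\partial_i))^{p^{k_i}}$ map injectively into the quotient. This rests on the injectivity furnished by $C_W(X)=0$ (Lemma \ref{l4.2}) and on the $\mathbb{Z}$-degree considerations that keep $h$ and the $\partial_i^{p^{k_i}}$ outside $\overline{HO}\cap\overline{S}$; with these secured, the identifications of $L$ and of the ideal $(\overline{HO}\cap\overline{S})/X$ follow as above.
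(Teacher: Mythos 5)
Your proposal is correct and takes essentially the route the paper intends: the paper's own ``proof'' of Theorem \ref{t4.3} is only a citation of the analogous result \cite[Theorem 18]{lzw}, and the ingredients it assembles for that purpose --- Theorem \ref{t3.9}, Theorem \ref{l3.7} with Remark \ref{r3}, Proposition \ref{p1}, and Lemmas \ref{l4.1} and \ref{l4.2} --- are precisely the ones you deploy, in the same roles (the decomposition of $\mathrm{Der}(X)$, injectivity of $\mathrm{ad}_X$ from $C_W(X)=0$, and the bracket relations giving commutativity of $L$ and the ideal property). Your write-up simply supplies the details the paper delegates to the reference.
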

\begin{proof}
It is similar to  \cite[Theorem 18]{lzw}.
\end{proof}

\begin{remark}\label{r4}
Suppose  $M$, $N_1$, $N_2$ are  subspaces of a vector space $V$. If $N=N_1\oplus N_2$
and $N_1\subset M$, then $M\cap N=N_1\oplus M\cap N_2 $.
\end{remark}

\begin{proposition}\label{p4.4}
$\overline{HO}\cap \overline{S}=\frak{g}\oplus \mathbb{F}\mathrm{T_H}(x_jx_{j'})\oplus
\mathrm{span}_{\mathbb{F}}\big\{x^{(\pi_i\varepsilon_i)}\partial_{i'}\big|i\in \mathbf{Y_0}\big\},$
for any  $j\in\mathbf{Y_0}$.
\end{proposition}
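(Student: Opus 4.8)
The plan is to prove the two inclusions separately, dealing with ``$\supseteq$'' together with the directness of the sum, and reserving the bulk of the work for the reverse inclusion, which I would organize around the divergence map followed by an ``integration'' (discrete Poincar\'e lemma) argument.

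First I would record the computation that governs all the divergence bookkeeping. Using $\mu(i')=\mu(i)+\overline{1}$, the commutativity $\partial_i\partial_{i'}=\partial_{i'}\partial_i$, and the reindexing $\mathbf{Y_1}\to\mathbf{Y_0}$, $i\mapsto i'$, one obtains
$$\mathrm{div}\big(\mathrm{T_H}(a)\big)=(-1)^{\mathrm{p}(a)}\,2\,\Delta(a),\qquad a\in\mathcal{O}(m,m;\underline{t}).$$
Since $p>3$, this recovers $\mathrm{T_H}(a)\in S'\Leftrightarrow\Delta(a)=0$ (that is, \cite[Lemma 2.2]{lh}) and, more precisely, $\mathrm{T_H}(a)\in\overline{S}\Leftrightarrow\Delta(a)\in\mathbb{F}$. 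As $\Delta(x_jx_{j'})=1$, the element $\mathrm{T_H}(x_jx_{j'})$ lies in $HO\cap\overline{S}$ but not in $\frak{g}$, and $\mathrm{T_H}(a)\in HO\cap\overline{S}$ forces $\mathrm{T_H}(a-\Delta(a)\,x_jx_{j'})\in\frak{g}$, whence $HO\cap\overline{S}=\frak{g}\oplus\mathbb{F}\,\mathrm{T_H}(x_jx_{j'})$. A short check of the defining relations of $\overline{HO}$ shows each $x^{(\pi_i\varepsilon_i)}\partial_{i'}$ lies in $\overline{HO}$, and its divergence is $0$, so $x^{(\pi_i\varepsilon_i)}\partial_{i'}\in\overline{HO}\cap S'$. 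This settles ``$\supseteq$''.

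For directness the crucial point is $\mathrm{span}_{\mathbb{F}}\{x^{(\pi_i\varepsilon_i)}\partial_{i'}\}\cap HO=0$: if $\sum_i c_i x^{(\pi_i\varepsilon_i)}\partial_{i'}=\mathrm{T_H}(a)$, then comparing the $\partial_r$-coefficients with $r\in\mathbf{Y_0}$ forces $\partial_{i'}(a)=0$ for all $i\in\mathbf{Y_0}$ (so $a$ involves no exterior variable), while comparing the $\partial_{i'}$-coefficients forces $\partial_i(a)=c_i x^{(\pi_i\varepsilon_i)}$; the latter has no antiderivative in $\mathcal{O}(m;\underline{t})$ unless $c_i=0$, because $x^{((\pi_i+1)\varepsilon_i)}=0$. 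Since $\frak{g}$ and $\mathrm{T_H}(x_jx_{j'})$ lie in $HO$, and since $\mathrm{T_H}(x_jx_{j'})$ sits in degree $0$ whereas $x^{(\pi_i\varepsilon_i)}\partial_{i'}$ sits in degree $\pi_i-1=p^{t_i}-2>0$, this together with $HO\cap\overline{S}=\frak{g}\oplus\mathbb{F}\,\mathrm{T_H}(x_jx_{j'})$ yields directness of the whole sum.

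It remains to prove ``$\subseteq$''. Given $E\in\overline{HO}\cap\overline{S}$, I would split it into $\mathbb{Z}$-homogeneous parts; as $\mathrm{div}$ preserves $\mathbb{Z}$-degree and $\mathrm{div}(E)\in\mathbb{F}=\mathcal{O}_0$, all parts of nonzero degree already lie in $S'$, and a multiple of $\mathrm{T_H}(x_jx_{j'})$ subtracted in degree $0$ reduces everything to the claim $\overline{HO}\cap S'=\frak{g}\oplus\mathrm{span}_{\mathbb{F}}\{x^{(\pi_i\varepsilon_i)}\partial_{i'}\}$. Writing $E=\sum_r a_r\partial_r$, the defining relations of $\overline{HO}$ are exactly the closedness conditions $\partial_i(a_{j'})=\pm\partial_j(a_{i'})$ needed to solve $\partial_i(a)=a_{i'}-c_ix^{(\pi_i\varepsilon_i)}$ ($i\in\mathbf{Y_0}$) and $\partial_{i'}(a)=\pm a_i$ ($i\in\mathbf{Y_0}$) for a potential $a$ and scalars $c_i$. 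Integrating variable by variable, the sole obstruction is the impossibility of integrating a top power in an even direction, which closedness forces to be a scalar multiple of $x^{(\pi_i\varepsilon_i)}$ (nonzero only in degree $\pi_i-1$) and which is absorbed by $c_i$; the odd directions contribute no obstruction. After subtracting $\sum_i c_ix^{(\pi_i\varepsilon_i)}\partial_{i'}$ one gets $E=\mathrm{T_H}(a)$, and since $E\in S'$ and each $x^{(\pi_i\varepsilon_i)}\partial_{i'}\in S'$, we have $\mathrm{div}(\mathrm{T_H}(a))=0$, i.e.\ $\Delta(a)=0$, so $\mathrm{T_H}(a)\in\frak{g}$ by \cite[Lemma 2.2]{lh}.

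The main obstacle is precisely the integration step: proving rigorously, in the divided power/exterior setting, that the only obstruction to solving $\partial_r(a)=b_r$ is the top-power term in each even variable and that the closedness relations pin each such obstruction down to a scalar multiple of $x^{(\pi_i\varepsilon_i)}$. This amounts to a hands-on computation of the first cohomology $\overline{HO}/HO\cong H^{1}_{\mathrm{dR}}(\mathcal{O}(m,m;\underline{t}))$, which is $m$-dimensional with basis the classes of $x^{(\pi_i\varepsilon_i)}\partial_{i'}$; verifying that the odd (exterior) coordinates do not enlarge this cohomology is the delicate part.
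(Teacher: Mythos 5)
Your overall strategy coincides with the paper's: both arguments rest on the two decompositions $\overline{S}=S'\oplus\mathbb{F}\mathrm{T_H}(x_jx_{j'})$ and $\overline{HO}=HO\oplus\mathrm{span}_{\mathbb{F}}\big\{x^{(\pi_i\varepsilon_i)}\partial_{i'}\mid i\in \mathbf{Y_0}\big\}$, followed by intersecting. The parts of your proposal that you actually carry out are correct: the divergence formula $\mathrm{div}(\mathrm{T_H}(a))=(-1)^{\mathrm{p}(a)}2\Delta(a)$ is right (it gives $\mathrm{div}(\mathrm{T_H}(x_jx_{j'}))=-2$, matching the paper), the identification $HO\cap\overline{S}=\frak{g}\oplus\mathbb{F}\mathrm{T_H}(x_jx_{j'})$ is right, the membership $x^{(\pi_i\varepsilon_i)}\partial_{i'}\in\overline{HO}\cap S'$ checks out against the defining relations, and your directness argument (the top divided power $x^{(\pi_i\varepsilon_i)}$ has no antiderivative in the truncated algebra, so $\mathrm{span}_{\mathbb{F}}\{x^{(\pi_i\varepsilon_i)}\partial_{i'}\}\cap HO=0$) is if anything more explicit than the paper's, which deduces directness from a general remark on subspaces.

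The gap is exactly where you locate it yourself. The inclusion $\overline{HO}\cap S'\subseteq\frak{g}+\mathrm{span}_{\mathbb{F}}\{x^{(\pi_i\varepsilon_i)}\partial_{i'}\}$ --- equivalently the decomposition $\overline{HO}=HO\oplus\mathrm{span}_{\mathbb{F}}\{x^{(\pi_i\varepsilon_i)}\partial_{i'}\}$ --- carries essentially all the content of the proposition, and your proposal only gestures at it: you outline an integration (Poincar\'e-lemma) argument and then declare it ``the main obstacle'' and ``the delicate part'' without executing it. In particular, you never verify the two claims on which your reduction depends: that the closedness relations of $\overline{HO}$ confine every obstruction to solving $\partial_i(a)=a_{i'}$ to a scalar multiple of the even top power $x^{(\pi_i\varepsilon_i)}$, and that the exterior directions contribute no further obstruction; both genuinely require an argument in the divided-power/exterior setting. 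The paper does not prove this decomposition either --- it quotes it as \cite[Proposition 20]{lzw}, the same reference from which the definition of $\overline{HO}$ is taken. So your proposal becomes a complete proof, essentially identical to the paper's, the moment you replace the sketched integration by that citation; carrying out the integration in full would instead make the argument self-contained, but that computation is precisely what is missing.
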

\begin{proof}
At first, we assert that for any $j\in\mathbf{Y_0}$,
$$\overline{S}=S'\oplus\mathbb{F}\mathrm{T_H}(x_jx_{j'}).$$
Note that
$$\mathrm{div}\big(\mathrm{T_H}(x_jx_{j'})\big)=-2, \, j\in\mathbf{Y_0}.$$
It is sufficient to show that $\overline{S}\subset S'\oplus\mathbb{F}\mathrm{T_H}(x_jx_{j'})$.
For any $A\in \overline{S}$, there exists $a\in \mathbb{F}$ such that $\mathrm{div}(A)=a$.
Hence  $\mathrm{div}(A+\frac a2\mathrm{T_H}(x_jx_{j'}))=0$ and $A\in S'\oplus\mathbb{F}\mathrm{T_H}(x_jx_{j'})$.

From \cite[Proposition 20]{lzw}, we know that
$$\overline{HO}=HO\oplus\mathrm{span}_{\mathbb{F}}
\big\{x^{(\pi_i\varepsilon_i)}\partial_{i'}\mid i\in \mathbf{Y_0}\big\},$$
hence
$$\overline{HO}\cap\overline{S}=HO\cap\overline{S}\oplus\mathrm{span}_{\mathbb{F}}
\big\{x^{(\pi_i\varepsilon_i)}\partial_{i'}\mid i\in \mathbf{Y_0}\big\}.$$
Moreover,
$$\overline{HO}\cap \overline{S}=\frak{g}\oplus \mathbb{F}\mathrm{T_H}(x_jx_{j'})\oplus
\mathrm{span}_{\mathbb{F}}\big\{x^{(\pi_i\varepsilon_i)}\partial_{i'}\mid i\in \mathbf{Y_0}\big\}$$
for any $j\in\mathbf{Y_0}$.
\end{proof}

\begin{corollary}\label{c4.5}
$\mathrm{Der_{out}}(\frak{g})\cong L\oplus\big(\frak{g}\oplus \mathbb{F}\mathrm{T_H}(x_jx_{j'})\oplus
\mathrm{span}_{\mathbb{F}}\big\{x^{(\pi_i\varepsilon_i)}\partial_{i'}
\mid i\in \mathbf{Y_0}\big\}\big)\big/\frak{g},$
for any $j\in\mathbf{Y_0},$
where $$\big(\frak{g}\oplus
\mathrm{span}_{\mathbb{F}}\big\{x^{(\pi_i\varepsilon_i)}\partial_{i'}
\mid i\in \mathbf{Y_0}\big\}\big)\big/\frak{g}$$
is just the odd part of $\mathrm{Der_{out}}(\frak{g})$ and
$$L\oplus\big(\mathbb{F}\mathrm{T_H}(x_jx_{j'})\oplus\frak{g}\big)\big/\frak{g}$$
is the even part.
\end{corollary}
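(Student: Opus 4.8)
The plan is to obtain the isomorphism by a direct substitution and then to refine it into the graded statement by a parity count. First I would specialize Theorem \ref{t4.3} to $X=\frak{g}$, which yields $\mathrm{Der_{out}}(\frak{g})\cong L\oplus(\overline{HO}\cap\overline{S})/\frak{g}$. Proposition \ref{p4.4} then identifies the complement of $\frak{g}$ inside $\overline{HO}\cap\overline{S}$ as $\mathbb{F}\mathrm{T_H}(x_jx_{j'})\oplus\mathrm{span}_{\mathbb{F}}\{x^{(\pi_i\varepsilon_i)}\partial_{i'}\mid i\in\mathbf{Y_0}\}$ for any fixed $j\in\mathbf{Y_0}$, and substituting this description into the quotient produces exactly the displayed isomorphism for $\mathrm{Der_{out}}(\frak{g})$.

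It remains to sort the summands by parity. Since $\frak{g}$ is a $\mathbb{Z}_2$-graded ideal of $\overline{HO}\cap\overline{S}$ (Proposition \ref{p1}), the quotient $(\overline{HO}\cap\overline{S})/\frak{g}$ carries an induced $\mathbb{Z}_2$-grading, and I need only compute the parities of the coset representatives together with those of the generators of $L$. For $i\in\mathbf{Y_0}$ one has $\mathrm{p}(\partial_i)=\mu(i)=\bar0$, so each $(\mathrm{ad}_{\frak{g}}(\partial_i))^{p^{k_i}}$ is even; likewise $\mathrm{ad}h$ is even, being the degree derivation. Hence $L$ lies in the even part. For the representative $\mathrm{T_H}(x_jx_{j'})$ I would use that $\mathrm{T_H}$ is odd while $\mathrm{p}(x_jx_{j'})=\mu(j)+\mu(j')=\bar1$ (as $j\in\mathbf{Y_0}$ and $j'\in\mathbf{Y_1}$), so $\mathrm{T_H}(x_jx_{j'})$ is even. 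Finally $x^{(\pi_i\varepsilon_i)}\partial_{i'}$ is odd, because $x^{(\pi_i\varepsilon_i)}\in\mathcal{O}(m;\underline{t})$ is even and $\mathrm{p}(\partial_{i'})=\mu(i')=\bar1$.

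Collecting these parities gives the asserted splitting: the even part is $L\oplus(\mathbb{F}\mathrm{T_H}(x_jx_{j'})\oplus\frak{g})/\frak{g}$ and the odd part is $(\frak{g}\oplus\mathrm{span}_{\mathbb{F}}\{x^{(\pi_i\varepsilon_i)}\partial_{i'}\mid i\in\mathbf{Y_0}\})/\frak{g}$. The argument is essentially formal once Theorem \ref{t4.3} and Proposition \ref{p4.4} are in hand, so I do not anticipate a genuine obstacle. The only point requiring care is to confirm that the direct sum of Theorem \ref{t4.3} respects the $\mathbb{Z}_2$-grading, i.e.\ that $L$ and $(\overline{HO}\cap\overline{S})/\frak{g}$ are $\mathbb{Z}_2$-graded subspaces of $\mathrm{Der_{out}}(\frak{g})$; this is immediate since $L$ is spanned by $\mathbb{Z}_2$-homogeneous derivations and the quotient is $\mathbb{Z}_2$-graded.
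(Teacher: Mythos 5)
Your proposal is correct and takes essentially the same approach as the paper, whose proof of Corollary \ref{c4.5} is precisely the one-line observation that it follows by substituting the decomposition of Proposition \ref{p4.4} into Theorem \ref{t4.3} with $X=\frak{g}$. Your explicit parity bookkeeping (that $L$ is even, that $\mathrm{T_H}(x_jx_{j'})$ is even since $\mathrm{T_H}$ is odd and $\mathrm{p}(x_jx_{j'})=\bar{1}$, and that $x^{(\pi_i\varepsilon_i)}\partial_{i'}$ is odd) simply makes explicit the grading claim that the paper leaves implicit.
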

\begin{proof}
This is a direct consequence of Theorem \ref{t4.3} and Proposition \ref{p4.4}.
\end{proof}
Let $A$, $B$ be Lie superalgebras. Recall that $A\ltimes_\varphi B$ is the
semidirect product of $A$ and $B$ with a homomorphism $\varphi: A\rightarrow \mathrm{Der}_{\mathbb{F}}B$.
Let $\mathrm{id}$ denote $a\mapsto a$, $a\in A$ when $A$ is the subalgebra of $\mathrm{Der}_{\mathbb{F}}B$.

Put $\iota:=\Sigma_{i=1}^{m}t_i-m$. Let $G:=G_{\bar{0}}\oplus G_{\bar{1}}$
be a $\mathbb{Z}_2$-graded vector space over $\mathbb{F}$, and $\{h_{-1}, h_0, h_1,\ldots, h_{\iota}\}$
 be an $\mathbb{F}$-basis of $G_{\bar{0}}$, $\{g_1,\ldots, g_m\}$ be an $\mathbb{F}$-basis of $G_{\bar{1}}$.
 Then $G$ is a $(\Sigma_{i=1}^{m}t_i+2)$-dimensional Lie superalgebra by means of
 \begin{eqnarray*}
 &&[h_{-1}, g_j]=0, \, j=1,\ldots, m;\\
 &&[h_{0}, g_j]=-2g_j, \, j=1,\ldots, m;\\
 &&[h_{i}, g_j]=0, \, i=1,\ldots, \iota, \, j=1,\ldots, m;\\
 &&[h_i, h_k]=[g_j, g_l]=0, \, i, k=-1,\ldots, \iota; \, j, l=1,\ldots, m.
 \end{eqnarray*}
Hence we have
$$G=\mathcal{C}(G)\oplus\mathbb{F}h_0\oplus G_{\bar{1}},$$
where $\mathcal{C}(G)$ is the center of $G$ and\\

(1) $\mathcal{C}(G)\oplus\mathbb{F}h_0$ is the even part of $G$;\\

(2) $G_{\bar{1}}$ is an Abelain subalgebra of $G$;\\

 Obviously, $\mathrm{ad}(\mathcal{C}(G)\oplus\mathbb{F}h_0)$ is
 a subalgebra of $\mathrm{Der}(G_{\bar{1}}).$ We can obtain
 $$G\cong \mathrm{ad}(\mathcal{C}(G)\oplus\mathbb{F}h_0)\ltimes_{\mathrm{id}} G_{\bar{1}}.$$
\begin{theorem}\label{t4.6}
The outer derivation algebra
$\mathrm{Der_{out}}(\frak{g})$ is isomorphic to
the Lie superalgebra $G$.
\end{theorem}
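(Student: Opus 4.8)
The plan is to transcribe the explicit description of $\mathrm{Der_{out}}(\frak g)$ obtained in Corollary \ref{c4.5} (resting on Theorem \ref{t4.3} and Proposition \ref{p4.4}) into the abstract model $G$. Fix an index $j\in\mathbf{Y_0}$ and define a linear map $\Phi\colon G\to\mathrm{Der_{out}}(\frak g)$ on the prescribed bases by sending $h_0$ to the class of the degree derivation $\mathrm{ad}\,h$, sending $h_{-1}$ to the class of $\mathrm{ad}\,\mathrm{T_H}(x_jx_{j'})$, sending $h_1,\dots,h_\iota$ to the classes of the $\iota$ derivations $(\mathrm{ad}\,\partial_i)^{p^{k_i}}$ ($i\in\mathbf{Y_0}$, $1\le k_i<t_i$), and sending $g_1,\dots,g_m$ to the classes of $\mathrm{ad}(x^{(\pi_i\varepsilon_i)}\partial_{i'})$ ($i\in\mathbf{Y_0}$). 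By Corollary \ref{c4.5} these classes are bases of the even, respectively odd, parts of $\mathrm{Der_{out}}(\frak g)$, whose dimensions $\iota+2$ and $m$ equal $\dim G_{\bar 0}$ and $\dim G_{\bar 1}$; hence $\Phi$ is a parity-preserving linear isomorphism, and it remains only to check that it preserves brackets. Because $\mathrm{ad}$ is a homomorphism of $\mathrm{Nor}_W(\frak g)=\overline{HO}\cap\overline{S}\oplus\mathbb F h$ (Remark \ref{r3}) into $\mathrm{Der}(\frak g)$, every bracket of images may be computed as $\mathrm{ad}$ of a bracket inside $W$.

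The crucial relation is $[h_0,g_j]=-2g_j$. Since $\mathrm{ad}\,h$ is the $\mathbb Z$-degree operator on $W$ and $x^{(\pi_i\varepsilon_i)}\partial_{i'}$ is homogeneous of $\mathbb Z$-degree $|\pi_i\varepsilon_i|-1=p^{t_i}-2$, and since $p^{t_i}=0$ in $\mathbb F$, one obtains
\[
[h,\,x^{(\pi_i\varepsilon_i)}\partial_{i'}]=(p^{t_i}-2)\,x^{(\pi_i\varepsilon_i)}\partial_{i'}=-2\,x^{(\pi_i\varepsilon_i)}\partial_{i'},
\]
so $[\Phi(h_0),\Phi(g_i)]=-2\,\Phi(g_i)$, as required. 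Next I will verify that the images of $h_{-1},h_1,\dots,h_\iota$ are central. Writing $\mathrm{T_H}(x_jx_{j'})=x_{j'}\partial_{j'}-x_j\partial_j$ and using $\pi_j\equiv-1\pmod p$, a direct computation gives $[\mathrm{T_H}(x_jx_{j'}),x^{(\pi_i\varepsilon_i)}\partial_{i'}]=0$ for every $i$; and since $\partial_i$ commutes with $\partial_{i'}$,
\[
[(\mathrm{ad}\,\partial_i)^{p^{k_i}},x^{(\pi_l\varepsilon_l)}\partial_{l'}]=\delta_{il}\,x^{((\pi_i-p^{k_i})\varepsilon_i)}\partial_{i'}=\delta_{il}\,\mathrm{T_H}(x^{((\pi_i-p^{k_i}+1)\varepsilon_i)}),
\]
whose right-hand side lies in $\frak g$ because its $\mathrm{T_H}$-argument involves no odd variable and is therefore annihilated by $\Delta$. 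Thus all these brackets vanish in the quotient, matching $[h_{-1},g_j]=[h_k,g_j]=0$.

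Finally the remaining structure constants of $G$ are all zero and must be matched. The even--even brackets vanish: $\mathrm{ad}\,h$ and the $(\mathrm{ad}\,\partial_i)^{p^{k_i}}$ commute among themselves because $L$ is Abelian (Theorem \ref{t4.3}), while $[h,\mathrm{T_H}(x_jx_{j'})]=0$ and $[\partial_i^{p^{k_i}},\mathrm{T_H}(x_jx_{j'})]=\mathrm{T_H}(\partial_i^{p^{k_i}}(x_jx_{j'}))=0$ by Lemma \ref{l4.1}(8) and (5). The odd--odd brackets vanish too: since $\partial_{i'}^2=0$ and $\partial_{i'}$ annihilates $x^{(\pi_l\varepsilon_l)}$ for all $l$, one has $[x^{(\pi_i\varepsilon_i)}\partial_{i'},x^{(\pi_l\varepsilon_l)}\partial_{l'}]=0$ for all $i,l$, matching $[g_j,g_l]=0$. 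Having reproduced every bracket of $G$, $\Phi$ is an isomorphism of Lie superalgebras, proving $\mathrm{Der_{out}}(\frak g)\cong G$. The only genuinely delicate point is the degree coincidence $p^{t_i}-2\equiv-2$ in $\mathbb F$, which is exactly what forces $\mathrm{ad}\,h$ to act on the odd generators by the scalar $-2$ demanded of $h_0$; together with the inclusions $[(\mathrm{ad}\,\partial_i)^{p^{k_i}},x^{(\pi_i\varepsilon_i)}\partial_{i'}]\in\frak g$, this is the heart of the matter, everything else being a direct reading of Lemma \ref{l4.1}, Theorem \ref{t4.3}, and Proposition \ref{p4.4}.
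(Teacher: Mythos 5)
Your proposal is correct and follows essentially the same route as the paper: both rest on Corollary \ref{c4.5}, use Lemma \ref{l4.1} and the Abelianness of $L$ from Theorem \ref{t4.3} for the even--even and odd--odd brackets, and reduce the isomorphism to the same three key computations $[\mathrm{T_H}(x_jx_{j'}),x^{(\pi_i\varepsilon_i)}\partial_{i'}]=0$, $[h,x^{(\pi_i\varepsilon_i)}\partial_{i'}]=-2\,x^{(\pi_i\varepsilon_i)}\partial_{i'}$, and $[\partial_i^{p^{k_i}},x^{(\pi_l\varepsilon_l)}\partial_{l'}]=\delta_{il}\,x^{((\pi_i-p^{k_i})\varepsilon_i)}\partial_{i'}\in\frak g$. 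Your write-up merely makes explicit what the paper leaves as ``a direct computation'' (the degree argument via $p^{t_i}\equiv 0$ and the $\Delta$-annihilation check), so it is the same proof in slightly greater detail.
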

\begin{proof}
By Corollary \ref{c4.5} we have
$$\mathrm{Der_{out}}(\frak{g})\cong L\oplus\big(\frak{g}\oplus \mathbb{F}\mathrm{T_H}(x_jx_{j'})\oplus
\mathrm{span}_{\mathbb{F}}\big\{x^{(\pi_i\varepsilon_i)}\partial_{i'}
\mid i\in \mathbf{Y_0}\big\}\big)\big/\frak{g},$$
for any $j\in\mathbf{Y_0}.$ From Lemma \ref{l4.1}
we know that both $\big(\frak{g}\oplus
\mathrm{span}_{\mathbb{F}}\big\{x^{(\pi_i\varepsilon_i)}\partial_{i'}
\mid i\in \mathbf{Y_0}\big\}\big)\big/\frak{g}$ and
$L\oplus\big(\mathbb{F}\mathrm{T_H}(x_jx_{j'})\oplus\frak{g}\big)\big/\frak{g}$
are  Abelian.
A direct computation  shows that
\begin{eqnarray*}
 &&[\mathrm{T_H}(x_ix_{i'}), x^{(\pi_j\varepsilon_j)}\partial_{j'}]=0\quad i, j\in \mathbf{Y_0};\\
 &&[h_, x^{(\pi_j\varepsilon_j)}\partial_{j'}]=-2x^{(\pi_j\varepsilon_j)}\partial_{j'}\quad j\in \mathbf{Y_0};\\
 &&[\partial_i^{p^{k_i}}, x^{(\pi_j\varepsilon_j)}\partial_{j'}]=
 \delta_{i=j}x^{((\pi_j-p^{k_j})\varepsilon_j)}\partial_{j'}\\
 &&\quad \quad\quad\quad\quad\quad\quad\quad\in HO\cap S'=\frak{g}\quad i, j\in \mathbf{Y_0} \; 1\leq k_i< t_i.\\
 \end{eqnarray*}
 Now one can easily establish an isomorphism  from  $\mathrm{Der_{out}}(\frak{g})$
 to $G$.
\end{proof}
Now we consider the relationship among $\mathrm{Der_{out}}(\frak{g})$,
$\mathrm{Der_{out}}(\frak{g}^{(1)})$ and $\mathrm{Der_{out}}(\frak{g}^{(2)})$.
\begin{proposition}\label{t4.7} The following statements hold:
\begin{eqnarray*}
&&\mathrm{Der_{out}}(\frak{g}^{(1)})\cong
\mathrm{Der_{out}}(\frak{g})\oplus\frak{g}/\frak{g}^{(1)},\\
&&\mathrm{Der_{out}}(\frak{g}^{(2)})\cong
\mathrm{Der_{out}}(\frak{g}^{(1)})\oplus\frak{g}^{(1)}/\frak{g}^{(2)}.
\end{eqnarray*}
\end{proposition}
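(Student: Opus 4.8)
The plan is to deduce both isomorphisms from Theorem \ref{t4.3}, which already expresses each outer derivation algebra as a direct sum. Applying that theorem to $\frak{g}$, $\frak{g}^{(1)}$ and $\frak{g}^{(2)}$ gives
$$\mathrm{Der_{out}}(X)\cong L\oplus(\overline{HO}\cap\overline{S})/X,\qquad X=\frak{g},\,\frak{g}^{(1)},\,\frak{g}^{(2)},$$
where in each case $L=\mathbb{F}\cdot h\oplus\mathrm{span}_{\mathbb{F}}\{(\mathrm{ad}_X(\partial_i))^{p^{k_i}}\mid i\in\mathbf{Y_0},\,1\leq k_i<t_i\}$ is Abelian. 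The first thing I would check is that these three copies of $L$ are canonically identified: the restriction isomorphisms $\mathrm{Der}(\frak{g})\cong\mathrm{Der}(\frak{g}^{(1)})\cong\mathrm{Der}(\frak{g}^{(2)})$ of Theorem \ref{t3.9} carry $\mathrm{ad}\,h$ to $\mathrm{ad}\,h$ and $(\mathrm{ad}_{\frak{g}}\partial_i)^{p^{k_i}}$ to $(\mathrm{ad}_{\frak{g}^{(1)}}\partial_i)^{p^{k_i}}$ to $(\mathrm{ad}_{\frak{g}^{(2)}}\partial_i)^{p^{k_i}}$. Hence $L$ is the same Abelian Lie superalgebra (of dimension $\iota+1$) for all three, and the whole difference between the three outer derivation algebras sits in the factor $(\overline{HO}\cap\overline{S})/X$.

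Next I would exploit the inclusion chain $\frak{g}^{(2)}\subset\frak{g}^{(1)}\subset\frak{g}\subset\overline{HO}\cap\overline{S}$ together with Proposition \ref{p1}, which guarantees that $\frak{g}$, $\frak{g}^{(1)}$ and $\frak{g}^{(2)}$ are ideals of $\overline{HO}\cap\overline{S}$. In particular $\frak{g}/\frak{g}^{(1)}$ is a $\mathbb{Z}_2$-graded ideal of $(\overline{HO}\cap\overline{S})/\frak{g}^{(1)}$ whose quotient is $(\overline{HO}\cap\overline{S})/\frak{g}$, so the standard quotient decomposition yields
$$(\overline{HO}\cap\overline{S})/\frak{g}^{(1)}\cong(\overline{HO}\cap\overline{S})/\frak{g}\;\oplus\;\frak{g}/\frak{g}^{(1)}.$$
Feeding this into the expressions above and cancelling the common factor $L$,
$$\mathrm{Der_{out}}(\frak{g}^{(1)})\cong L\oplus(\overline{HO}\cap\overline{S})/\frak{g}^{(1)}\cong\big(L\oplus(\overline{HO}\cap\overline{S})/\frak{g}\big)\oplus\frak{g}/\frak{g}^{(1)}\cong\mathrm{Der_{out}}(\frak{g})\oplus\frak{g}/\frak{g}^{(1)},$$
which is the first assertion. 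The second follows verbatim after replacing the pair $(\frak{g},\frak{g}^{(1)})$ by $(\frak{g}^{(1)},\frak{g}^{(2)})$: using that $\frak{g}^{(1)}$ is an ideal of $\overline{HO}\cap\overline{S}$ and $\frak{g}^{(2)}\subset\frak{g}^{(1)}$, one gets $(\overline{HO}\cap\overline{S})/\frak{g}^{(2)}\cong(\overline{HO}\cap\overline{S})/\frak{g}^{(1)}\oplus\frak{g}^{(1)}/\frak{g}^{(2)}$ and hence $\mathrm{Der_{out}}(\frak{g}^{(2)})\cong\mathrm{Der_{out}}(\frak{g}^{(1)})\oplus\frak{g}^{(1)}/\frak{g}^{(2)}$.

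The step I expect to require the most care is the very first one, the identification and cancellation of the $L$-factors across the three algebras; everything else is then formal bookkeeping powered by Theorem \ref{t4.3} and Proposition \ref{p1}. I would also be explicit about the meaning of $\oplus$: in line with Theorem \ref{t4.6}, where $\mathrm{Der_{out}}(\frak{g})$ is realized as a genuine semidirect product, the summand $\frak{g}/\frak{g}^{(1)}$ (resp. $\frak{g}^{(1)}/\frak{g}^{(2)}$) enters as a graded-vector-space direct summand carried by the ideal in the decomposition above, rather than as a central direct factor; indeed in $(\overline{HO}\cap\overline{S})/\frak{g}^{(1)}$ the class of $\mathrm{T_H}(x_jx_{j'})$ acts as the identity on the classes of the $\mathfrak{A}_{1}$-generators spanning $\frak{g}/\frak{g}^{(1)}$. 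An alternative route, should one prefer to avoid Theorem \ref{t4.3}, is to build the short exact sequence $0\to\frak{g}/\frak{g}^{(1)}\to\mathrm{Der_{out}}(\frak{g}^{(1)})\to\mathrm{Der_{out}}(\frak{g})\to0$ directly from the restriction isomorphism of Theorem \ref{t3.9}, using Lemma \ref{l4.2} (vanishing of the centralizers in $W$) to see that $\frak{g}\hookrightarrow\mathrm{ad}_{\frak{g}^{(1)}}(\frak{g})$ is injective and hence that the kernel is exactly $\frak{g}/\frak{g}^{(1)}$.
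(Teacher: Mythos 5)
Your overall strategy coincides with the paper's: both reduce the problem, via Theorem \ref{t4.3}, to comparing the factors $(\overline{HO}\cap\overline{S})/X$ for $X=\frak{g},\frak{g}^{(1)},\frak{g}^{(2)}$, and both use the natural projection $(\overline{HO}\cap\overline{S})/\frak{g}^{(1)}\rightarrow(\overline{HO}\cap\overline{S})/\frak{g}$, whose kernel is $\frak{g}/\frak{g}^{(1)}$ (this is the map $\phi$ in the paper's proof). However, there is a genuine gap at the decisive step. You assert that since $\frak{g}/\frak{g}^{(1)}$ is an ideal of $(\overline{HO}\cap\overline{S})/\frak{g}^{(1)}$ with quotient $(\overline{HO}\cap\overline{S})/\frak{g}$, ``the standard quotient decomposition'' yields
$$(\overline{HO}\cap\overline{S})/\frak{g}^{(1)}\cong(\overline{HO}\cap\overline{S})/\frak{g}\oplus\frak{g}/\frak{g}^{(1)}.$$
No such general principle exists: an ideal $K$ of a Lie (super)algebra $A$ with quotient $A/K$ does not give a decomposition of $A$, even as a semidirect sum, unless the extension splits, i.e.\ unless $A$ contains a subalgebra complementary to $K$; the Heisenberg algebra modulo its center is the standard counterexample. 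That splitting is precisely the mathematical content of the proposition, and your proposal never exhibits it. Your closing observation that the class of $\mathrm{T_H}(x_jx_{j'})$ acts as the identity on the classes of the $\mathfrak{A}_{1}$-generators correctly describes the action \emph{on} the ideal, but it does not produce a complementary subalgebra; the same objection applies to your alternative route, since building the short exact sequence from Theorem \ref{t3.9} and Lemma \ref{l4.2} again only establishes the extension, not its splitting.

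The paper closes exactly this hole with Corollary \ref{c4.5} (which rests on Proposition \ref{p4.4}) together with Lemma \ref{l4.1}: since $\overline{HO}\cap\overline{S}=\frak{g}\oplus\mathbb{F}\mathrm{T_H}(x_jx_{j'})\oplus\mathrm{span}_{\mathbb{F}}\{x^{(\pi_i\varepsilon_i)}\partial_{i'}\mid i\in\mathbf{Y_0}\}$ and $\frak{g}=\frak{g}^{(1)}\oplus\mathfrak{A}_{1}$, the subspace
$$\big(\frak{g}^{(1)}\oplus\mathbb{F}\mathrm{T_H}(x_jx_{j'})\oplus\mathrm{span}_{\mathbb{F}}\{x^{(\pi_i\varepsilon_i)}\partial_{i'}\mid i\in\mathbf{Y_0}\}\big)\big/\frak{g}^{(1)}$$
of $(\overline{HO}\cap\overline{S})/\frak{g}^{(1)}$ is closed under the bracket (the relevant brackets vanish or fall into $\frak{g}^{(1)}$, by Lemma \ref{l4.1} and the computations used for Theorem \ref{t4.6}), maps isomorphically onto $(\overline{HO}\cap\overline{S})/\frak{g}$, and is complementary to the kernel $\frak{g}/\frak{g}^{(1)}$. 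With that ingredient inserted where you invoke the ``standard decomposition'' — and likewise for the pair $(\frak{g}^{(1)},\frak{g}^{(2)})$, where the complement to $\frak{g}^{(1)}/\frak{g}^{(2)}=(\frak{g}^{(1)})_{\xi-4}$ plays the corresponding role — your argument becomes the paper's proof. Your preliminary point about identifying the three copies of $L$ through the restriction isomorphisms of Theorem \ref{t3.9} is sound, and the paper uses it implicitly when it treats $L$ as a common factor of the two sides.
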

\begin{proof}
Define
\begin{eqnarray*}
\phi: L\oplus (\overline{HO}\cap \overline{S})/\frak{g}^{(1)}&\longrightarrow &
L\oplus (\overline{HO}\cap \overline{S})/\frak{g}\\
A+\frak{g}^{(1)} &\longrightarrow & A+\frak{g}.
\end{eqnarray*}
Note that $\phi$ is a monomorphism, and
$\ker(\phi)=\frak{g}/\frak{g}^{(1)}$.
By Theorem \ref{t4.3} we obtain
$$\mathrm{Der_{out}}(\frak{g}^{(1)})\big/\ker(\phi)\cong\mathrm{Der_{out}}(\frak{g}).$$
Applying  Corollary  \ref{c4.5} we can obtain, for any $j\in\mathbf{Y_0}$,
\begin{eqnarray*}
\mathrm{Der_{out}}(\frak{g})&\cong&L\oplus\big(\frak{g}\oplus \mathbb{F}\mathrm{T_H}(x_jx_{j'})\oplus
\mathrm{span}_{\mathbb{F}}\big\{x^{(\pi_i\varepsilon_i)}\partial_{i'}
\big|i\in \mathbf{Y_0}\big\}\big)\big/\frak{g}\\
&\cong&L\oplus\big(\frak{g}^{(1)}\oplus \mathbb{F}\mathrm{T_H}(x_jx_{j'})\oplus
\mathrm{span}_{\mathbb{F}}\big\{x^{(\pi_i\varepsilon_i)}\partial_{i'}
\big|i\in \mathbf{Y_0}\big\}\big)\big/\frak{g}^{(1)},
\end{eqnarray*}
which is a subalgebra of $\mathrm{Der_{out}}(\frak{g}^{(1)})$ by means of Lemma \ref{l4.1}.
Hence we obtain
$$\mathrm{Der_{out}}(\frak{g}^{(1)})\cong
\mathrm{Der_{out}}(\frak{g})\oplus\frak{g}/\frak{g}^{(1)}.$$
Similarly
$$\mathrm{Der_{out}}(\frak{g}^{(2)})\cong
\mathrm{Der_{out}}(\frak{g}^{(1)})\oplus\frak{g}^{(1)}/\frak{g}^{(2)}.$$
\end{proof}

Put $||u||=|\pi|-(\Sigma_{i'\in u}\pi_i)+|u|-2$, $u\in\mathbb{B}(m)$, $i\in \mathbf{Y_0}.$
Now we can define $G^1:=G\oplus \Lambda(m)$,
where  $\Lambda(m)$ is the  exterior superalgebra. $G'$ has a $\mathbb{Z}_2$-grading structure induced
 by the $\mathbb{Z}_2$-grading structures of $G$ and $\Lambda(m)$, then $G^1$ is a  $(\Sigma_{i=1}^{m}t_i+2+2^m)$-dimensional Lie superalgebra by means of
 \begin{eqnarray*}
 &&[h_{-1}, x^u]=x^u;\\
 &&[h_{0}, x^u]=||u||x^u;\\
 &&[h_{i}, x^u]=0, \, \,i=1,\ldots, \iota;\\
 &&[g_i, x^u]=(-1)^{(i, u)}\delta_{i'\in u}x^{u-\langle i'\rangle}, \,
 i=1,\ldots, m;\\
 &&[x^u, x^v]=0,
 \end{eqnarray*}
for all $x^u, x^v\in \Lambda(m),$
and $(-1)^{(i, u)}$ is determined by the equation
$\partial_{i'}(x^u)=(-1)^{(i, u)}x^{u-\langle i'\rangle}.$
Obviously, $\mathrm{ad}(G)$
is a subalgebra of $\mathrm{Der}(\Lambda(m))$ and
$$G^1\cong\mathrm{ad}(G)\ltimes_{\mathrm{id}} \Lambda(m).$$

Recall $\mathfrak{A}_{1}=\{\mathrm{T_H}(x^{(\alpha)}x^{u})\mid
\mathbf{I}(\alpha,u)=\widetilde{\mathbf{I}}(\alpha,u)=\emptyset\}$ is a $\mathbb{Z}_2$-graded subspace of $\frak{g}$ with
$\mathfrak{A}_{1}=\mathfrak{A}_{1\bar{0}}\oplus\mathfrak{A}_{1\bar{1}},$
where
$\mathfrak{A}_{1\bar{\theta}}=\mathfrak{A}_{1}\cap\frak{g}_{\bar{\theta}}, \, \bar{\theta}=\bar{0}, \bar{1}.$
Notice that
$\frak{g}=\frak{g}^{(1)}\oplus\mathfrak{A}_{1}$.
\begin{theorem}\label{t4.8}
The outer derivation algebra
$\mathrm{Der_{out}}(\frak{g}^{(1)})$
is isomorphic to the Lie superalgebra $G^1$.
\end{theorem}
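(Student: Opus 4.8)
The plan is to assemble the isomorphism from the structural results already in hand, reducing everything to a single vector-space identification together with a short list of bracket computations. By Proposition \ref{t4.7} we have $\mathrm{Der_{out}}(\frak{g}^{(1)})\cong\mathrm{Der_{out}}(\frak{g})\oplus\frak{g}/\frak{g}^{(1)}$, in which $\frak{g}/\frak{g}^{(1)}$ is an ideal (it is the kernel of the map $\phi$ built in that proof) and, by Theorem \ref{t4.6}, the complementary subalgebra $\mathrm{Der_{out}}(\frak{g})$ is isomorphic to $G$. Since $G^{1}=G\oplus\Lambda(m)$ was realized as the semidirect product $\mathrm{ad}(G)\ltimes_{\mathrm{id}}\Lambda(m)$ with $\Lambda(m)$ abelian, it suffices to produce a $G$-equivariant linear isomorphism $\frak{g}/\frak{g}^{(1)}\cong\Lambda(m)$ matching the prescribed brackets $[h_{-1},x^{u}]$, $[h_{0},x^{u}]$, $[h_{i},x^{u}]$, $[g_{i},x^{u}]$, and to note that $[\frak{g}/\frak{g}^{(1)},\frak{g}/\frak{g}^{(1)}]=0$.

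For the identification I would use $\frak{g}=\frak{g}^{(1)}\oplus\mathfrak{A}_{1}$, so $\frak{g}/\frak{g}^{(1)}\cong\mathfrak{A}_{1}$, and solve the defining condition $\mathbf{I}(\alpha,u)=\widetilde{\mathbf{I}}(\alpha,u)=\emptyset$ explicitly: for each $i\in\mathbf{Y_0}$ it forces $\alpha_{i}=0$ when $i'\in u$ and $\alpha_{i}=\pi_{i}$ when $i'\notin u$. Thus $\alpha$ is uniquely determined by $u$; writing $\alpha(u)$ for this exponent, the elements $\mathrm{T_H}(x^{(\alpha(u))}x^{u})$, $u\in\mathbb{B}(m)$, form a basis of $\mathfrak{A}_{1}$ (they are nonzero since $x^{(\alpha(u))}x^{u}\neq 1$ and $\ker\mathrm{T_H}=\mathbb{F}\cdot 1$), giving a linear isomorphism $\Lambda(m)\to\mathfrak{A}_{1}$, $x^{u}\mapsto\mathrm{T_H}(x^{(\alpha(u))}x^{u})$, of the correct dimension $2^{m}$. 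The grading of $\Lambda(m)$ inside $G^{1}$ is taken so as to absorb the parity of the odd operator $\mathrm{T_H}$; the structure constants of $G^{1}$ are consistent with this choice.

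It then remains to verify the four cross-brackets by letting the generators of $G$ act on $\mathrm{T_H}(x^{(\alpha(u))}x^{u})$ modulo $\frak{g}^{(1)}$. The degree derivation $\mathrm{ad}\,h$ contributes the eigenvalue $|\alpha(u)|+|u|-2=||u||$, matching $[h_{0},x^{u}]=||u||x^{u}$. For $h_{-1}$, represented by $\mathrm{T_H}(x_{j}x_{j'})=x_{j'}\partial_{j'}-x_{j}\partial_{j}$, the computation behind (\ref{05153}) gives the eigenvalue $\delta_{j'\in u}-\alpha(u)_{j}$, which equals $1$ in both cases $j'\in u$ and $j'\notin u$ because $\pi_{j}\equiv -1\pmod{p}$; this matches $[h_{-1},x^{u}]=x^{u}$. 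For $h_{i}=(\mathrm{ad}\,\partial_{l})^{p^{k_l}}$, Lemma \ref{l4.1}(5) yields $\mathrm{T_H}(\partial_{l}^{p^{k_l}}(x^{(\alpha(u))}x^{u}))$, whose leader has $\mathbf{I}=\emptyset$ and $\widetilde{\mathbf{I}}\neq\emptyset$ and hence lies in $\frak{g}^{(1)}$ by \cite[Corollary 3.5]{lh}, giving $[h_{i},x^{u}]=0$. The essential computation is the action of $g_{i}=x^{(\pi_{i}\varepsilon_{i})}\partial_{i'}$: one first establishes the clean formula
\begin{equation*}
[x^{(\pi_{i}\varepsilon_{i})}\partial_{i'},\mathrm{T_H}(a)]=\mathrm{T_H}\big(x^{(\pi_{i}\varepsilon_{i})}\partial_{i'}(a)\big),\qquad a\in\mathcal{O}(m,m;\underline{t}),
\end{equation*}
and evaluates the right-hand side at $a=x^{(\alpha(u))}x^{u}$, obtaining $(-1)^{(i,u)}\delta_{i'\in u}\,\mathrm{T_H}(x^{(\alpha(u-\langle i'\rangle))}x^{u-\langle i'\rangle})$ after the divided-power identity $x^{(\pi_{i}\varepsilon_{i})}x^{(\alpha(u))}=x^{(\alpha(u-\langle i'\rangle))}$ (valid since $\alpha(u)_{i}=0$ when $i'\in u$); this is precisely $[g_{i},x^{u}]=(-1)^{(i,u)}\delta_{i'\in u}x^{u-\langle i'\rangle}$. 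Finally $[\mathfrak{A}_{1},\mathfrak{A}_{1}]\subseteq[\frak{g},\frak{g}]=\frak{g}^{(1)}$ gives $[x^{u},x^{v}]=0$, so the map extends to an isomorphism of Lie superalgebras onto $G^{1}$.

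The main obstacle is the displayed bracket formula for $g_{i}$. It is tempting to assert it by analogy with $[\mathrm{T_H}(c),\mathrm{T_H}(a)]=\mathrm{T_H}(\mathrm{T_H}(c)(a))$, but $x^{(\pi_{i}\varepsilon_{i})}\partial_{i'}$ is not in the image of $\mathrm{T_H}$ (by Proposition \ref{p4.4} it spans the extra summand of $\overline{HO}$ over $HO$), so the identity must be checked directly in $W$. The delicate point is that although $\partial_{i'}$ kills the coefficient $x^{(\pi_{i}\varepsilon_{i})}$ in the exterior direction, the contribution $\partial_{r'}(x^{(\pi_{i}\varepsilon_{i})})$ with $r\in\mathbf{Y_1}$ (so that $r'=i$ is an even index) is nonzero and cancels exactly against the $r=i$ discrepancy coming from the Leibniz expansion of $\mathrm{T_H}(x^{(\pi_{i}\varepsilon_{i})}\partial_{i'}(a))$; overlooking this summand produces a spurious term $x^{((\pi_{i}-1)\varepsilon_{i})}\partial_{i'}(a)\partial_{i'}$. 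Hence the careful sign and parity bookkeeping in the two summands of the Witt bracket is where the real work lies, while the remaining identifications are routine consequences of the earlier results.
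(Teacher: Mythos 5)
Your proof is correct and takes essentially the same route as the paper: reduce via Proposition \ref{t4.7} and Theorem \ref{t4.6}, identify $\frak{g}/\frak{g}^{(1)}$ with $\mathfrak{A}_{1}$ spanned by the elements $\mathrm{T_H}\big(x^{(\pi-\Sigma_{r'\in u}\pi_r\varepsilon_r)}x^{u}\big)$, and check the four cross-brackets together with $[\mathfrak{A}_{1},\mathfrak{A}_{1}]\subseteq\frak{g}^{(1)}$. Your explicit derivation of $[x^{(\pi_{i}\varepsilon_{i})}\partial_{i'},\mathrm{T_H}(a)]=\mathrm{T_H}\big(x^{(\pi_{i}\varepsilon_{i})}\partial_{i'}(a)\big)$ and the parity bookkeeping for $\Lambda(m)$ are details the paper leaves implicit, but the argument is the same.
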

\begin{proof}
From Proposition  \ref{t4.7} we know that $\mathrm{Der_{out}}(\frak{g}^{(1)})\cong
\mathrm{Der_{out}}(\frak{g})\oplus\frak{g}/\frak{g}^{(1)}$.
Notice that $\frak{g}^{(1)}$ is an ideal of $\frak{g}$. Applying Lemma \ref{l4.1} and Theorem \ref{t4.6},
it is sufficient to consider the operation between  $\mathrm{Der_{out}}(\frak{g})$ and $\mathfrak{A}_{1}$.
By the definition of $\mathfrak{A}_{1}$ we know $\mathfrak{A}_{1}$ is spanned by the elements with the form
 $\mathrm{T_H}(x^{(\pi-(\Sigma_{r'\in u}\pi_r\varepsilon_r))}x^u)$,
a direct computation  shows that
\begin{eqnarray*}
 &&\big[\mathrm{T_H}(x_ix_{i'}), \mathrm{T_H}\big(x^{(\pi-(\Sigma_{r'\in u}\pi_r\varepsilon_r))}x^u\big)\big]
=\mathrm{T_H}\big(x^{(\pi-(\Sigma_{r'\in u}\pi_r\varepsilon_r))}x^u\big), \,i\in \mathbf{Y_0};\\
 &&\big[h_, \mathrm{T_H}\big(x^{(\pi-(\Sigma_{r'\in u}\pi_r\varepsilon_r))}x^u\big)\big]=
 ||u||\mathrm{T_H}\big(x^{(\pi-(\Sigma_{r'\in u}\pi_r\varepsilon_r))}x^u\big);\\
 &&\big[\partial_i^{p^{k_i}}, \mathrm{T_H}\big(x^{(\pi-(\Sigma_{r'\in u}\pi_r\varepsilon_r))}x^u\big)\big]
 \subset\frak{g}^{(1)},\, i\in \mathbf{Y_0}, \; 1\leq k_i< t_i;\\
 &&\big[x^{(\pi_i\varepsilon_i)}\partial_{i'}, \mathrm{T_H}\big(x^{(\pi-(\Sigma_{r'\in u}\pi_r\varepsilon_r))}x^u\big)\big]=
 \delta_{i'\in u}\mathrm{T_H}\big(x^{(\pi-(\Sigma_{r'\in u}\pi_r\varepsilon_r)+\pi_i\varepsilon_i)}\partial_{i'}x^u\big), \, i\in \mathbf{Y_0}.
 \end{eqnarray*}
 Note that $[\mathfrak{A}_{1}, \mathfrak{A}_{1}]\subset\frak{g}^{(1)}.$
 Hence we can easily establish an isomorphism  from  $\mathrm{Der_{out}}(\frak{g}^{(1)})$
 to $G^1$.
\end{proof}

Put $G^2:=G^1\oplus\mathbb{F}f$. Define the even part of $G^2$ as follows:
\begin{eqnarray*}
&&G^2_{\bar{0}}=G^1_{\bar{0}}\oplus\mathbb{F}f\, \, \, \mbox{if $m$ is even},\\
&&G^2_{\bar{0}}=G^1_{\bar{0}}\, \, \, \quad\quad\,\,\mbox{if $ m$ is odd}.
\end{eqnarray*}
Then $G^2$ is a $(\Sigma_{i=1}^{m}t_i+3+2^m)$-dimensional Lie superalgebra by means of
 \begin{eqnarray*}
 &&[h_{-1}, f]=2f;\\
 &&[h_{0}, f]=-4f;\\
 &&[h_{i}, f]=0, \, i=1,\ldots, \iota;\\
 &&[g_i, f]=0, \, i=1,\ldots, m; \\
 &&[x^u, f]=0, \, x^u\in \Lambda(m).
 \end{eqnarray*}
Moreover, $G^2\cong\mathrm{ad}(G^1)\ltimes_{\mathrm{id}} \mathbb{F}f.$

Recall $\frak{g}^{(1)}=\frak{g}^{(2)}\oplus (\frak{g}^{(1)})_{\xi-4}$ and
$$(\frak{g}^{(1)})_{\xi-4}=\mathrm{span}_{\mathbb{F}}\bigg\{\mathrm{T_H}\bigg(x^{(\pi-\varepsilon_i)}x^{\omega-\langle i' \rangle}-
\sum_{r\in\mathbf{Y_0}\backslash \{i\}}\Gamma^i_r \big(x^{(\pi-\varepsilon_i)}x^{\omega-\langle i' \rangle}\big)\bigg)\, \bigg|\, i\in \mathbf{Y_0}\bigg\}.$$
Notice that $\mathrm{dim}(\frak{g}^{(1)})_{\xi-4}=1.$
\begin{theorem}\label{t4.9}
The outer derivation algebra $\mathrm{Der_{out}}(\frak{g}^{(2)})$
is isomorphic to the Lie superalgebra $G^2$.
\end{theorem}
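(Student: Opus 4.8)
The plan is to combine the two preceding structural results. By Proposition~\ref{t4.7},
$$\mathrm{Der_{out}}(\frak{g}^{(2)})\cong\mathrm{Der_{out}}(\frak{g}^{(1)})\oplus\frak{g}^{(1)}/\frak{g}^{(2)},$$
and by Theorem~\ref{t4.8} the first summand is isomorphic to $G^1$. Since $\frak{g}^{(1)}=\frak{g}^{(2)}\oplus(\frak{g}^{(1)})_{\xi-4}$ with $\dim(\frak{g}^{(1)})_{\xi-4}=1$, the second summand is one-dimensional, spanned by the class of
$$F:=\mathrm{T_H}\bigg(x^{(\pi-\varepsilon_i)}x^{\omega-\langle i'\rangle}-\sum_{r\in\mathbf{Y_0}\backslash\{i\}}\Gamma^i_r\big(x^{(\pi-\varepsilon_i)}x^{\omega-\langle i'\rangle}\big)\bigg),$$
which I would assign the role of $f$. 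First I would note that $F$ has $\mathbb{Z}_2$-parity $\overline{m}$ (as $\mathrm{T_H}$ is odd and the monomial $x^{\omega-\langle i'\rangle}$ has parity $\overline{m-1}$), matching the placement of $f$ in $G^2_{\bar{0}}$ precisely when $m$ is even. Hence $\mathrm{Der_{out}}(\frak{g}^{(2)})$ and $G^2=G^1\oplus\mathbb{F}f$ coincide as $\mathbb{Z}_2$-graded spaces, and it remains only to match the bracket relations, i.e. to compute the action of each generator of $G^1$ on the class of $F$ modulo $\frak{g}^{(2)}$.

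Recall from Theorems~\ref{t4.6} and~\ref{t4.8} that $h_{-1},h_0,h_i,g_i,x^u$ are represented respectively by $\mathrm{T_H}(x_jx_{j'})$, the degree derivation $\mathrm{ad}\,h$, the $(\mathrm{ad}_X\partial_i)^{p^{k_i}}$, the $x^{(\pi_i\varepsilon_i)}\partial_{i'}$, and the $\mathfrak{A}_1$-elements $\mathrm{T_H}(x^{(\pi-\sum_{r'\in u}\pi_r\varepsilon_r)}x^u)$. The two essential computations are the scalar actions. Using $\mathrm{T_H}(x_jx_{j'})=x_{j'}\partial_{j'}-x_j\partial_j$ together with $[\mathrm{T_H}(a),\mathrm{T_H}(b)]=\mathrm{T_H}(\mathrm{T_H}(a)(b))$, one gets $[\mathrm{T_H}(x_jx_{j'}),\mathrm{T_H}(x^{(\alpha)}x^u)]=(\delta_{j'\in u}-\alpha_j)\mathrm{T_H}(x^{(\alpha)}x^u)$; exactly as in Remark~\ref{l2.5} this scalar is constant over the summands of $F$, and on the leader $\alpha=\pi-\varepsilon_i,\ u=\omega-\langle i'\rangle$ it equals $1-\pi_j=2-p^{t_j}=2$ in $\mathbb{F}$, yielding $[h_{-1},f]=2f$. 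Since $F$ is $\mathbb{Z}$-homogeneous of degree $\xi-4\equiv-4\pmod p$, the degree derivation gives $[h,F]=(\xi-4)F=-4F$, i.e. $[h_0,f]=-4f$.

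The remaining brackets vanish modulo $\frak{g}^{(2)}$ by degree estimates. For $h_i$, the element $[\partial_i^{p^{k_i}},F]$ lies in $\frak{g}^{(1)}$ (these are derivations of $\frak{g}^{(1)}$, cf. Proposition~\ref{t3.3}) and has $\mathbb{Z}$-degree $(\xi-4)-p^{k_i}\le\xi-5$, where $(\frak{g}^{(1)})_j=(\frak{g}^{(2)})_j$, so $[h_i,f]=0$. For $g_i$, the bracket $[x^{(\pi_i\varepsilon_i)}\partial_{i'},F]$ lies in the ideal $HO$ of $\overline{HO}$ but has degree $\xi+p^{t_i}-6>\xi-2$ for $p>3$, exceeding the top degree of $HO$, hence vanishes and $[g_i,f]=0$. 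For $x^u$, the bracket lies in $\frak{g}^{(1)}=[\frak{g},\frak{g}]$ and has degree $||u||+(\xi-4)$; since $||u||\ge m-2\ge1$ for all $u$ when $m\ge3$, this exceeds $\xi-4=\max\deg\frak{g}^{(1)}$, so $[x^u,f]=0$. Assembling the five relations, the assignment extending $G^1\cong\mathrm{Der_{out}}(\frak{g}^{(1)})$ by $f\mapsto$ (class of $F$) is a Lie superalgebra isomorphism $G^2\cong\mathrm{Der_{out}}(\frak{g}^{(2)})$.

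I expect the main obstacle to be the two scalar computations — in particular confirming that $\mathrm{T_H}(x_jx_{j'})$ acts by one and the same scalar on every summand of $F$ (not merely on its leader), and that the two scalars reduce to $2$ and $-4$ modulo $p$ via $p^{t_i}\equiv0$. The three vanishing statements should then be routine, resting only on the degree bounds $\max\deg\frak{g}^{(1)}=\xi-4$, $\max\deg HO=\xi-2$, and on the positivity $||u||>0$.
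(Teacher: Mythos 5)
Your proposal is correct and takes essentially the same approach as the paper's proof: reduce via Proposition~\ref{t4.7} and Theorem~\ref{t4.8} to the one-dimensional component $(\frak{g}^{(1)})_{\xi-4}$, identify its class with $f$, and verify the relations $[h_{-1},f]=2f$, $[h_0,f]=-4f$, $[h_i,f]=[g_i,f]=[x^u,f]=0$ against the defining brackets of $G^2$. The only (harmless) difference is stylistic: where the paper verifies the vanishing brackets by explicit computation, you obtain them from degree bounds on $\frak{g}^{(1)}$, $\frak{g}^{(2)}$ and $HO$, which yields the same relations.
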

\begin{proof}
Similar to Theorem \ref{t4.8}, it is sufficient to consider the operation between $\mathrm{Der_{out}}(\frak{g}^{(1)})$ and $(\frak{g}^{(1)})_{\xi-4}$.
For any $$M=a\mathrm{T_H}\bigg(x^{(\pi-\varepsilon_i)}x^{\omega-\langle i' \rangle}-
\sum_{r\in\mathbf{Y_0}\backslash \{i\}}\Gamma^i_r \big(x^{(\pi-\varepsilon_i)}x^{\omega-\langle i' \rangle}\big)\bigg)
\in (\frak{g}^{(1)})_{\xi-4},$$
where $a\in \mathbb{F}$,
we can obtain:
\begin{eqnarray*}
 &&[\mathrm{T_H}(x_ix_{i'}), M]=2M, \,i\in \mathbf{Y_0};\\
 &&[h_, M]=-4M;\\
 &&[\partial_j^{p^{k_j}}, M]=\mathrm{T_H}\bigg(x^{(\pi-p^{k_j}\varepsilon_j-\varepsilon_i)}x^{\omega-\langle i' \rangle}-
\sum_{r\in\mathbf{Y_0}\backslash \{i\}}\Gamma^i_r \big(x^{(\pi-p^{k_j}\varepsilon_j-\varepsilon_i)}x^{\omega-\langle i' \rangle}\big)\bigg)\\
&&\in\frak{g}^{(2)},\, j\in \mathbf{Y_0};\\
&&[x^{(\pi_i\varepsilon_i)}\partial_{i'}, M]=0, \, i\in \mathbf{Y_0}.
\end{eqnarray*}
 Note that
$[\mathfrak{A}_{1}, (\frak{g}^{(1)})_{\xi-4}]=0.$
Hence we can easily establish an isomorphism  from  $\mathrm{Der_{out}}(\frak{g}^{(2)})$
 to $G^2$.
\end{proof}

\end{document}